 \newtheorem{thm}{Theorem}[section]
 \newtheorem{prop}[thm]{Proposition}
 \newtheorem{lem}[thm]{Lemma}
 \newtheorem{cor}[thm]{Corollary}
\theoremstyle{definition}
 \newtheorem{dfn}[thm]{Definition}
\newtheorem{rem}[thm]{Remark}
\newtheorem{conv}[thm]{Convention}
 \numberwithin{equation}{section}
\newtheorem{mainthm}{ Theorem}
\renewcommand{\a}{\alpha}
\renewcommand{\d}{\delta}
\newcommand{\e}{\varepsilon}
\newcommand{\g}{\gamma}
\newcommand{\G}{\Gamma}
\renewcommand{\l}{\lambda}
\renewcommand{\L}{\Lambda}
\newcommand{\n}{\nabla}
\newcommand{\s}{\sigma}
\renewcommand{\t}{\tau}
\renewcommand{\leq}{\leqslant}
\renewcommand{\geq}{\geqslant}
\renewcommand{\setminus}{\smallsetminus}
\title[Curvature Free Rigidity for Higher Rank Three-manifolds]{CURVATURE FREE RIGIDITY FOR HIGHER RANK THREE-MANIFOLDS}
\author[Lin]{\bfseries Samuel Lin}
\address{
Department of Mathematics \\ 
Michigan State university   \\ 
East lansing, MI, 48823\\
USA}
\email{linsamue@msu.edu}
\dedicatory{ }
\begin{document}

\vspace{18mm} \setcounter{page}{1} \thispagestyle{empty}

\begin{abstract}
We prove two rigidity results for complete Riemannian three-manifolds of higher rank. Complete three-manifolds have higher spherical rank if an only if they are spherical space forms. Complete finite volume three-manifolds have higher hyperbolic rank if and only if they are hyperbolic space forms. 

\end{abstract}

\maketitle
\section{Introduction}
This paper proves rigidity results for three-manifolds in terms of geometric notions of rank first introduced in \cite{MR819559}, \cite{MR908215}, \cite{MR1114460} and \cite{MR2137949}.
Fix $\e=-1, 0, \text{or } 1$. A complete Riemannian manifold is said to have higher hyperbolic, Euclidean, or spherical rank if every geodesic admits a normal
parallel field making constant sectional curvature $\e$ with the geodesic.

Historically, these notions of rank have been studied in conjunction with sectional curvature bounds.
 Ballmann \cite{MR819559}, 
and independently Burns and Spatzier \cite{MR908215}, initiated the study of higher rank manifolds. They proved that
a finite volume, locally irreducible, manifold of higher Euclidean rank is locally symmetric if $-a^2<\mathrm{sec}\leq0$ for some constant $a$. This result was generalized in \cite{MR1079642} by Eberlein and Heber. Hamenst{\"a}dt generalized the notion of rank into the hyperbolic setting, proving that compact manifolds of higher hyperbolic rank are locally symmetric if $\mathrm{sec}\leq-1$ \cite{MR1114460}. This curvature condition was relaxed by Constantine to $\mathrm{sec}\leq0$ under additional dimensional or pinching assumptions \cite{MR2449143}. For manifolds of higher spherical rank, Shankar, Spatzier, and Wilking showed that
they are locally symmetric if $\mathrm{sec}\leq1$ \cite{MR2137949}. Partial spherical rank rigidity results were obtained by Schmidt, Shankar, and Spatzier when $\mathrm{sec}\geq1$ \cite{MR3493370}.

In contrast, there are fewer rank rigidity results without a priori sectional curvature bounds. Existing results include the works of 
 Molina-Olmos \cite{MR1472889,MR1860505}, Watkins \cite{MR3054631}, and Bettiol-Schmidt \cite{Bettiol:2014aa}, where Euclidean rank-rigidity 
results are proved after replacing curvature assumptions with assumptions on having many flats, having no focal points, and dimension, respectively.

This paper presents the first hyperbolic and spherical rank rigidity results after replacing a priori sectional curvature bounds with a dimensional 
assumption.

 \begin {mainthm} \label{thm_2}
A finite volume complete Riemannian three-manifold $M$ has higher hyperbolic rank if and only if $M$ is a hyperbolic
space form.
\end{mainthm}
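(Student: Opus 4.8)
The plan is to prove the nontrivial implication: a finite volume complete Riemannian three-manifold $M$ of higher hyperbolic rank has constant sectional curvature $-1$, hence (being complete) is a hyperbolic space form. The converse is immediate, since on a constant curvature $-1$ manifold every normal parallel field along any geodesic makes constant sectional curvature $-1$ with it.

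\emph{Step 1: an a priori pointwise constraint.} Fix $p\in M$ and work with the curvature operator $\mathcal R$ on $\Lambda^2 T_pM$. In dimension three $\Lambda^2 T_pM$ is three dimensional, $\mathcal R$ is symmetric, and, since every $2$-vector is decomposable, its eigenvalues are exactly the extreme sectional curvatures at $p$; write them $K_1\le K_2\le K_3$. Composing with the Hodge star identifies $\Lambda^2 T_pM$ with $T_pM$ and carries $\mathcal R$ to a symmetric operator $\widehat{\mathcal R}$ on $T_pM$ with the same eigenvalues, under which the plane $\{u\wedge w:w\perp u\}$ corresponds to $u^\perp$ and the sectional curvature of a plane equals $\langle\widehat{\mathcal R}n,n\rangle$ for a unit normal $n$. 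The rank hypothesis forces, for every unit $u$, a plane through $u$ of curvature exactly $-1$, i.e.\ $\langle(\widehat{\mathcal R}+I)n,n\rangle=0$ for some unit $n\perp u$; so $\widehat{\mathcal R}+I$ restricted to every hyperplane $u^\perp$ is indefinite or degenerate. By the eigenvalue interlacing theorem for restrictions to a hyperplane, this holds for \emph{all} $u$ precisely when the middle eigenvalue of $\widehat{\mathcal R}+I$ vanishes. Hence $K_2\equiv-1$ on $M$.

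\emph{Step 2: rigidity of the eigenstructure.} It remains to promote $K_2\equiv-1$ to $K_1\equiv K_3\equiv-1$. Let $\Sigma$ be the open set where the curvature is nonconstant, i.e.\ $K_1<-1$ or $K_3>-1$, and suppose $\Sigma\neq\emptyset$. Away from the zero locus of $K_3-K_1$ the simple eigenvalues produce smooth eigenline and eigenplane distributions, and a short computation shows that the plane spanned by the $K_1$- and $K_3$-eigendirections is always a curvature-$(-1)$ plane. The structural input is the contracted second Bianchi identity: in dimension three $\widehat{\mathcal R}=\tfrac{s}{2}\,g-\mathrm{Ric}$ is (up to sign) the Einstein tensor, hence divergence free, so $\widehat{\mathcal R}$ is a divergence-free symmetric two-tensor whose middle eigenvalue is identically $-1$. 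Feeding this into the parallel-field condition — differentiating $\langle\widehat{\mathcal R}W,W\rangle\equiv-1$ along a geodesic, $W$ the parallel normal to the rank plane — yields first order equations for the eigendistributions which force them to be integrable with totally geodesic leaves of constant curvature; concretely, $\Sigma$ carries locally a warped-product structure $N^2\times_f\mathbb R$ (with the exceptional eigenline tangent to the $\mathbb R$-factor).

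\emph{Step 3: the global obstruction.} Finally one must rule this structure out on a finite volume complete three-manifold. The plan is to exploit that finite volume makes the geodesic flow conservative, so Poincar\'e recurrence applies to the Liouville measure; a recurrent geodesic lying in $\Sigma$ carries the rigid rank-plane picture for all time, and continuity together with the warped-product equations then propagates constant curvature $-1$ onto an open set meeting $\Sigma$, a contradiction. Alternatively, one shows that after the Bianchi analysis the function $K_3-K_1\ge0$ is subharmonic with finite integral, hence constant, again forcing $\Sigma=\emptyset$. Either way $K_1\equiv K_3\equiv-1$, so $M$ has constant curvature $-1$, and completeness makes it a hyperbolic space form. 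The main obstacle I anticipate is Step 3: without an a priori sectional curvature bound the thin part of $M$ need not be a standard cusp, so controlling the geometry there and globalizing the local warped-product picture of Step 2 — reconciling the dynamics of the geodesic flow with the finiteness of the volume — is where the real work lies.
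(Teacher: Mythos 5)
Your Step 1 is correct, and the converse direction is fine; but note that Step 1 uses only the pointwise consequence of higher rank (every tangent vector lies in a curvature $-1$ plane), and the interlacing argument recovers exactly the paper's normalization that the middle eigenvalue of the curvature operator is $-1$, i.e. $\l\le -1\le \L$ with $\l_{12}=-1$. The genuine gap is in Steps 2 and 3, which are stated as conclusions rather than proved, and whose concrete claims are not right. In Step 2, differentiating the rank condition along geodesics together with the Bianchi identity does \emph{not} give integrable eigendistributions with totally geodesic constant-curvature leaves, nor a local warped product $N^2\times_f\mathbb{R}$. What the parallel-field equations actually yield --- and this is the bulk of the paper's work --- is the following: at a point where Ricci has three distinct eigenvalues, a generic unit vector lies in exactly \emph{two} curvature $-1$ planes ($V^+$ and $V^-$), so higher rank only forces one of two quadratic Christoffel identities at each point; after resolving this dichotomy (the sets $\mathcal{F}^{\pm}$, which your proposal does not address), one obtains an isocurved field $E^+$ whose integral curves from some open set are geodesics, and along them the operator $A=\n E_1$ on $E_1^{\perp}$ is a \emph{multiple of the identity} (umbilic, not totally geodesic) satisfying the Riccati equation $E_1(\mathrm{tr}\,A)=-2\varepsilon-\tfrac12(\mathrm{tr}\,A)^2$ together with transport equations $E_1(R_i)=-\tfrac32\,\mathrm{tr}(A)\,R_i$ for the off-diagonal curvature components. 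In the hyperbolic case $\mathrm{tr}\,A\to 2$, i.e. the orthogonal leaves behave like horospheres; your claimed totally geodesic/warped-product picture is incompatible with this, and it is precisely the umbilic expansion that the finite-volume argument exploits.

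In Step 3 neither alternative is substantiated: no subharmonicity of $K_3-K_1$ is derived, and the recurrence idea (``the warped-product equations propagate constant curvature onto an open set'') has no supporting computation, so finite volume is never actually brought to bear. The paper's contradiction is different and concrete: the transport equations force the off-diagonal curvatures $R_2,R_3$ to tend to zero at the boundary of a generic component, while at least one is nonzero along the geodesic; combined with the Riccati equation this shows the isocurved flow $\varphi_t$ stays in the component for all forward (or backward) time with $\mathrm{div}(E_1)=\mathrm{tr}\,A\to 2$, so the volume of $\varphi_t(U)$ grows without bound, contradicting finite volume. The extremal locus (two coincident eigenvalues) is then removed by the same Riccati/divergence argument using the Schmidt--Wolfson isocurved geodesic field, leaving constant curvature $-1$. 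So while your skeleton (pointwise eigenvalue constraint, local structure on the nonisotropic set, finite-volume obstruction) matches the paper, the two essential steps are missing, and where they are specific they are incorrect.
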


 \begin{mainthm} \label {thm_1}
A complete Riemannian three-manifold $M$ has higher spherical rank if and only if $M$ is a spherical space form. 
\end{mainthm}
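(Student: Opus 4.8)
\emph{Proof proposal.} The ``if'' direction is immediate: a spherical space form $S^3/\Gamma$ has constant sectional curvature $1$, so along any unit speed geodesic \emph{every} unit normal parallel field spans with the velocity a plane of curvature $1$. For the converse, let $M$ be a complete Riemannian three-manifold of higher spherical rank. It suffices to show that $M$ has constant sectional curvature $1$, since a complete three-manifold of constant curvature $1$ is a spherical space form.

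\textbf{Step 1: $M$ is compact.} Let $\gamma$ be a unit speed geodesic and $V$ a unit normal parallel field along $\gamma$ with $\mathrm{sec}(\gamma',V)\equiv 1$. The field $J(t)=\sin(t)\,V(t)$ vanishes at $t=0,\pi$, and since $V$ is parallel,
\[
 I_0^{\pi}(J,J)=\int_0^{\pi}\bigl(|J'|^2-\langle R(J,\gamma')\gamma',J\rangle\bigr)\,dt=\int_0^{\pi}\bigl(\cos^2 t-\sin^2 t\bigr)\,dt=0.
\]
As $J\not\equiv 0$, the index form of $\gamma|_{[0,\pi]}$ is not positive definite, so by the index lemma $\gamma$ carries a point conjugate to $\gamma(0)$ at some parameter in $(0,\pi]$. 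Since a minimizing geodesic has no interior conjugate points, every point of $M$ lies within distance $\pi$ of $\gamma(0)$; hence $M$ is compact with $\mathrm{diam}\,M\le\pi$. The same argument applies to the universal cover, so $\pi_1(M)$ is finite.

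\textbf{Step 2 (the crux): the parallel field is an eigenfield.} I claim that along every geodesic $\gamma$ one has $R(V,\gamma')\gamma'\equiv V$ --- equivalently, that $\sin(t)V(t)$ and $\cos(t)V(t)$ are Jacobi fields. Completing $V$ to a parallel orthonormal frame $\{\gamma',V,W\}$, the Jacobi operator $R_\gamma=R(\cdot,\gamma')\gamma'$ is represented in $\{V,W\}$ by the symmetric matrix $\left(\begin{smallmatrix}1 & b\\ b & c\end{smallmatrix}\right)$, with $b(t)=\langle R_\gamma V,W\rangle$ and $c(t)=\mathrm{sec}(\gamma',W)$, and the claim is exactly that the off-diagonal term $b$ vanishes identically; once that is known, the vanishing of $I_0^\pi(\sin(t)V,\sin(t)V)$ puts $\sin(t)V(t)$ in the kernel of the (then positive semidefinite) index form, so it is Jacobi and $R_\gamma V=V$. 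To obtain $b\equiv 0$ I would combine (i) the conjugate point bound of Step~1, now applied along every geodesic and with every basepoint; (ii) a direct analysis of the coupled scalar Jacobi equations $x''+x+by=0$, $y''+bx+cy=0$; and (iii) the compactness of the unit tangent bundle $SM$: the identity $\mathrm{sec}(\gamma',V)\equiv 1$ propagates under the geodesic flow and under parallel transport of $V$, so the defect $b$ is a flow-related quantity, and Poincar\'e recurrence should force it to vanish --- otherwise one produces a geodesic on which $\sin(t)V(t)$ has vanishing index form on $[0,\pi]$ while that segment is free of interior conjugate points, making $\sin(t)V(t)$ Jacobi after all, a contradiction. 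I expect this step --- the passage from the \emph{inequality} $\mathrm{sec}(\gamma',V)\ge 1$ to an \emph{exact} eigenvalue equation, with no a priori upper curvature bound --- to be the principal obstacle, and the place where compactness and the geodesic dynamics enter decisively.

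\textbf{Step 3: conclusion in dimension three.} Granting Step~2, at each point $p$ and each unit $v\in T_pM$ there is a unit $u\perp v$ with $R(u,v)v=u$. Feeding this, for every tangent $2$-plane, together with the eigenfield property along all geodesics through $p$ --- and a further application of Step~2 to geodesics tangent to $W$ to pin down the remaining eigenvalue $c\equiv 1$ of the Jacobi operator --- and using that in dimension three the full curvature tensor is determined by the Ricci tensor, one deduces $\mathrm{Ric}\equiv 2g$. A three-manifold is Einstein if and only if it has constant sectional curvature, so $\mathrm{sec}_M\equiv 1$, and by completeness $M$ is a spherical space form.
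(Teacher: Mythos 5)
Your ``if'' direction and your Step 1 are fine (though note that the paper's proof of the spherical case never needs compactness). The proof fails at Step 2, which you yourself flag as the crux but do not actually carry out. The assertion that $\sin(t)V(t)$ is a Jacobi field, i.e.\ $R(V,\gamma')\gamma'=V$, is exactly what the a priori hypothesis $\mathrm{sec}\le 1$ buys in Shankar--Spatzier--Wilking: an upper curvature bound rules out conjugate points on $(0,\pi)$, so the index form on $[0,\pi]$ is positive semidefinite and a null vector of it must be Jacobi. In the curvature-free setting of this theorem there is no such bound, conjugate points may occur arbitrarily early along every geodesic, the index form on $[0,\pi]$ can be indefinite, and $I_0^{\pi}(J,J)=0$ then carries no information. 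Your proposed repair is not an argument: for the dichotomy you sketch to close, you would need to produce a geodesic segment of length $\pi$ that simultaneously carries a point where the off-diagonal term $b\neq 0$ and has no interior conjugate points; neither compactness of $SM$ nor Poincar\'e recurrence produces such a segment, since recurrence of the geodesic flow controls neither conjugate points nor the (non-flow-invariant) quantity $b$. So the passage from $\mathrm{sec}(\gamma',V)\equiv 1$ to the eigenvalue equation is an unproved claim, and it is precisely the difficulty the theorem is about.

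Even granting Step 2, Step 3 does not close. Knowing that for every unit $v$ the Jacobi operator $R_v=R(\cdot,v)v$ has some unit eigenvector with eigenvalue $1$ does not determine its second eigenvalue: applying Step 2 to a geodesic tangent to $W$ yields an eigenvalue-one eigenvector of $R_W$, but nothing forces that eigenvector to be $\gamma'$, hence nothing gives $\mathrm{sec}(\gamma',W)=1$, and $\mathrm{Ric}=2g$ does not follow. The pointwise condition you invoke is essentially $\mathrm{cvc}(1)$, which admits non-Einstein three-dimensional examples; eliminating them is the actual content of the theorem. The paper does this by a different route: in a Ricci-diagonalizing frame the higher rank hypothesis forces the Christoffel-symbol identities of Proposition \ref{point} on the generic set, which produce isocurved geodesics (Proposition \ref{tube}) along which $\mathrm{tr}(A)$, with $A=\nabla E_1$, satisfies the Riccati equation $\frac{d}{dt}\mathrm{tr}(A)=-2-\frac{1}{2}\mathrm{tr}(A)^2$ (Proposition \ref{evo_tr}); its finite-time blow-up, combined with the decay law (\ref{sph_rigid_4}) for the off-diagonal curvatures and Lemma \ref{off_zero}, shows the generic set is empty, and the analogous Riccati equation on the extremal set (Proposition \ref{ext}) empties it as well, leaving constant curvature one. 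Your outline contains no substitute for this mechanism.
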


The finite volume assumption in Theorem \ref{thm_2} is necessary. In each dimension $d\geq3$ and for each $\d>0$, there exist complete higher hyperbolic rank metrics on
$\mathbb{R}^d$ having non-constant pinched sectional curvatures $-1-\d \leq \mathrm{sec}\leq -1$ or $-1 \leq \mathrm{sec}\leq -1+\d$, as will be proved in \cite{P01}.
It should be noted that a non-symmetric manifold with infinite volume and higher hyperbolic rank has already been constructed by Connell in \cite{MR1934699}.
His example is five dimensionl and homogeneous. Our final result shows that no such examples exist in dimension three.

\begin{mainthm} \label{thm_3}
A homogeneous three-manifold of higher hyperbolic rank is a hyperbolic space form.
\end{mainthm}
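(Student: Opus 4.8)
The plan is to reduce to the universal cover and then to the classification of simply connected homogeneous Riemannian three-manifolds, applying Theorem~\ref{thm_2} whenever a finite-volume quotient is available and treating by hand the one family where it is not. Having higher hyperbolic rank passes to Riemannian covers and to their quotients, and a homogeneous manifold has a homogeneous universal cover, so it suffices to prove that a simply connected homogeneous Riemannian three-manifold $\widetilde M$ of higher hyperbolic rank is isometric to $\mathbb{H}^3$. If $\widetilde M$ admits a discrete group of isometries acting freely with a quotient $N$ of finite volume, then $N$ has higher hyperbolic rank, so Theorem~\ref{thm_2} forces $N$ to be a hyperbolic space form and hence $\widetilde M\cong\mathbb{H}^3$. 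By the classification of simply connected homogeneous Riemannian three-manifolds, $\widetilde M$ is isometric either to $S^2\times\mathbb{R}$ — which has the finite-volume quotient $S^2\times S^1$ — or to a three-dimensional Lie group with a left-invariant metric. In the latter case, if the group is unimodular it carries a lattice, so again $\widetilde M\cong\mathbb{H}^3$; thus the only remaining case is that $\widetilde M$ is a \emph{non-unimodular} three-dimensional Lie group $G$ with a left-invariant metric, precisely the case in which Milnor's theorem rules out lattices and this argument has no traction. That case is the heart of the matter.

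For it I would invoke Milnor's normal form: there is an orthonormal basis $e_1,e_2,e_3$ of $\mathfrak{g}$ with $e_1$ orthogonal to $[\mathfrak{g},\mathfrak{g}]$, with $[e_2,e_3]=0$, and with $\mathrm{ad}_{e_1}$ restricted to $\spn\{e_2,e_3\}$ having symmetric part $\mathrm{diag}(a,c)$ and skew part of norm $|b|$, where $a+c\neq 0$; normalize so that $a+c>0$. A Koszul-formula computation (equivalently, Milnor's curvature formulas) shows that the integral curve of $e_1$ is a geodesic, that parallel transport along it rotates $\spn\{e_2,e_3\}$ with angular speed $b$, and that along it $\mathrm{sec}\bigl(e_1,\cos\theta\,e_2+\sin\theta\,e_3\bigr)=-a^2\cos^2\theta-c^2\sin^2\theta+2b(c-a)\cos\theta\sin\theta$. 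If $b\neq 0$, the parallel normal fields along this geodesic attain every direction in $\spn\{e_2,e_3\}$, so higher hyperbolic rank forces the displayed expression to be identically $-1$; this yields $a^2=c^2$ and $a^2+c^2=2$, hence $a=c=1$, and a short computation then gives $\mathrm{sec}\equiv -1$, so $G\cong\mathbb{H}^3$.

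If instead $b=0$, so $\mathrm{ad}_{e_1}|_{[\mathfrak{g},\mathfrak{g}]}=\mathrm{diag}(a,c)$, I would split into three subcases. If $a=c$, the metric has constant curvature $-a^2$ and higher hyperbolic rank forces $a=1$. If $a\neq c$ and $ac=0$, the group is a Riemannian product $\mathbb{H}^2\times\mathbb{R}$, and the geodesic tangent to the $\mathbb{R}$-factor has every normal plane of zero curvature, contradicting higher hyperbolic rank. If $a\neq c$ and $ac\neq 0$, then $\exp(\spn\{e_1,e_2\})$ and $\exp(\spn\{e_1,e_3\})$ are totally geodesic copies of $\mathbb{H}^2(-a^2)$ and $\mathbb{H}^2(-c^2)$; along a non-radial geodesic of the first, the parallel normal field transverse to the slice makes non-constant curvature with the geodesic (because $a\neq c$), so the curvature-$(-1)$ parallel normal field must be the one tangent to the slice, forcing $a^2=1$, and likewise the second slice forces $c^2=1$ — incompatible with $a\neq c$ and $a+c>0$. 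In every case $\widetilde M\cong\mathbb{H}^3$, hence $M$ is a hyperbolic space form.

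The point needing care is this last subcase: writing a parallel normal unit field along a non-radial geodesic of $\mathbb{H}^2(-a^2)$ as $\cos\varphi\,e_3+\sin\varphi\,N$ with $\varphi$ constant and $N$ the in-slice unit normal, one must check that the two non-constant functions of arclength appearing in $\mathrm{sec}(\dot\gamma,\cos\varphi\,e_3+\sin\varphi\,N)$ are affinely independent, which is exactly what forces the $e_3$-component to drop out. Beyond that, the non-unimodular case is a direct if somewhat lengthy computation with the left-invariant geometry; the genuine obstacle throughout is simply that Theorem~\ref{thm_2} — the natural tool, which disposes of everything else — cannot be applied to these groups, since they carry no lattices.
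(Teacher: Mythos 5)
Your proof is correct, but it takes a genuinely different route from the paper. The paper never leaves its own machinery: since $M$ is homogeneous, the Ricci eigenvalues are constant, so $M$ equals one of $\mathcal{I}$, $\mathcal{E}$, $\mathcal{O}$; the isotropic case is trivial, the extremal case is exactly Theorem \ref{cvc_hom}, and the generic case $M=\mathcal{O}$ dies in three lines because homogeneity makes $R_2$ and $R_3$ constant, so the equations (\ref{evo_off}) force $tr(A)\equiv 0$ while Proposition \ref{evo_tr} with $\e=-1$ gives $E_1(tr(A))=2\neq 0$. You instead pass to the universal cover, invoke the classification of simply connected homogeneous three-manifolds ($S^2\times\mathbb{R}$ or a metric Lie group), dispose of every case admitting a lattice via Theorem \ref{thm_2}, and handle the lattice-free non-unimodular metric Lie groups by a direct Milnor-frame computation. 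I checked that computation: with $[e_1,e_2]=ae_2+be_3$, $[e_1,e_3]=-be_2+ce_3$, $[e_2,e_3]=0$ one indeed gets $\n_{e_1}e_1=0$, $\n_{e_1}e_2=be_3$, $\mathrm{sec}(e_1,e_2)=-a^2$, $\mathrm{sec}(e_1,e_3)=-c^2$, $\langle R(e_1,e_2)e_3,e_1\rangle=b(a-c)$ (your sign $2b(c-a)$ is just a convention and immaterial), so the $b\neq0$ case forces $a=c=1$ and constant curvature $-1$; in the $b=0$, $ac\neq0$, $a\neq c$ case the two coordinate slices are totally geodesic, the transverse field $e_3$ and the in-slice normal are both parallel along slice geodesics, and the cross curvature term $R(\dot\gamma,e_3,N,\dot\gamma)$ vanishes, so constancy of $\mathrm{sec}(\dot\gamma,\cos\varphi\,e_3+\sin\varphi\,N)$ along a non-radial slice geodesic forces $\cos\varphi=0$ and hence $a^2=1$, $c^2=1$, contradicting $a+c>0$ with $a\neq c$, exactly as you say. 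The trade-off: your argument needs external inputs stated without citation (the homogeneous classification, existence of lattices in all six simply connected unimodular three-dimensional Lie groups, Milnor's normal form), but it bypasses Sections 3--6 of the paper and does not even need Theorem \ref{cvc_hom}, since the non-unimodular computation covers the homogeneous extremal case as well; the paper's proof is far shorter and classification-free, but only because it leans on the evolution equations along isocurved geodesics and on the cited homogeneous rigidity result of Schmidt--Wolfson.
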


Our strategy is to analyze the local structure of three-manifolds of higher rank in terms of Ricci diagonalizing orthonormal frames. 
This is roughly done as follows. At non-isotropic points, the Ricci tensor either has two or three distinct eigenvalues. 
The set of points where the Ricci tensor has three distinct eigenvalues is called the generic set. The majority of work 
in this paper is to show that on the generic set, the Christoffel symbols of the Ricci diagonalizing frame satisfy an overdetermined system of differential equations.
From the system of equations, we deduce that the generic set must be empty for manifolds of higher spherical rank or finite volume manifolds of higher
hyperbolic rank. Hence the rigidity problem for a three-manifold $M$ of higher hyperbolic or spherical rank is reduced to the case when $M$ has \textit{extremal curvatures}, that is 
$\mathrm{sec}_p\geq \e$ or $\mathrm{sec}_p\leq \e$ for each $p\in M$. 

Three-manifolds with $\mathrm{cvc}(\e)$, a pointwise notion of having higher rank, and with extremal curvature $\e$ are studied by Schmidt and Wolfson in \cite{MR3298721}.
Their structural results are strengthened in this paper by the higher rank assumption, leading to the desired rigidity results.

For three-manifolds of higher Euclidean rank, the system of equations on the generic set still hold.
However, it appears to the author that these equations alone are not strong enough to imply a rigidity result.

In Section 2, we introduce notations and preliminary results. Local structures for nonisotropic sets are studied
in Section 3 and 4 in terms of Ricci diagonalizing orthonormal frames. This allows us to derive differential equations along a certain family of geodesics  
 on an open subset of the nonisotropic set in Section 5 and 6.  In Section 7, we apply the equations to prove 
the main theorems.

\section {Preliminaries}
This section introduces notations and preliminary results. Most of the results in this
section can be found in \cite{MR3298721}.
Throughout, $M$ denotes a complete Riemannian three-manifold with Levi-Civita connection $\n$ and curvature tensor $R$.
For each $p\in M$, let $T_pM$ and $U_pM$ denote the tangent space and unit tangent space at $p$ respectively. 

As $M$ is three dimensional, an orthonormal frame $\{e_1, e_2, e_3\}$ of $T_pM$ 
diagonalizes the Ricci curvature tensor if and only if
\begin{equation}\label{Ricci_diag}
R(e_1, e_2,e_3,e_1)=R(e_1, e_2,e_2,e_3)=R(e_1, e_3,e_3,e_2)=0.
\end{equation}
Let $\l_{ij}=\mathrm{sec}(e_i, e_j)$. Up to permuting indices,
\begin{equation} \label{per}
\l_{13}\leq\l_{12}\leq\l_{23}.
\end{equation}
All sectional curvatures are computable in terms of the $\l_{ij}$ as described in the next lemma proved in \cite[Lemma 2.2]{MR3298721}.
\begin{lem}  \label{sectional}
Let $\s$ be a 2-plane with unit normal vector $u=c_1e_1+c_2e_2+c_3e_3$.
Then $\mathrm{sec}(\s)=c_1^2\l_{23}+c_2^2\l_{13}+c_3^2\l_{12}.$

\end{lem}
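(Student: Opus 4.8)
The plan is to compute $\mathrm{sec}(\s)$ directly in the Ricci-diagonalizing frame $\{e_1,e_2,e_3\}$ and watch the relations \eqref{Ricci_diag} eliminate every off-diagonal component of the curvature tensor.

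First I would fix an orthonormal basis $\{v,w\}$ of $\s$, so that $\{v,w,u\}$ is an orthonormal basis of $T_pM$, and expand $v=\sum_i a_ie_i$, $w=\sum_i b_ie_i$. By multilinearity,
\[
\mathrm{sec}(\s)=R(v,w,w,v)=\sum_{i,j,k,l}a_ib_jb_ka_l\,R(e_i,e_j,e_k,e_l).
\]
Antisymmetry of $R$ in its first two and in its last two arguments forces $\{i,j\}$ and $\{k,l\}$ each to be a two-element subset of $\{1,2,3\}$ for a term to survive. The surviving components are of two kinds: the diagonal ones $R(e_i,e_j,e_j,e_i)=\l_{ij}$ (so $R(e_i,e_j,e_i,e_j)=-\l_{ij}$), and the mixed ones with $\{i,j\}\neq\{k,l\}$. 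Applying the symmetries $R(X,Y,Z,W)=-R(Y,X,Z,W)=-R(X,Y,W,Z)=R(Z,W,X,Y)$, every mixed component equals $\pm$ one of $R(e_1,e_2,e_3,e_1)$, $R(e_1,e_2,e_2,e_3)$, $R(e_1,e_3,e_3,e_2)$, hence vanishes by \eqref{Ricci_diag}. Collecting the diagonal terms pair by pair then gives
\[
\mathrm{sec}(\s)=\sum_{i<j}\l_{ij}\,(a_ib_j-a_jb_i)^2 .
\]

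It remains to match the coefficients $(a_ib_j-a_jb_i)^2$ with the $c_k^2$. The $3\times 3$ matrix $A$ with rows $(a_i)$, $(b_i)$, $(c_i)$ is orthogonal, since $\{v,w,u\}$ is orthonormal; because $A^{-1}=A^{\mathsf T}$ and $\det A=\pm 1$, the matrix of cofactors of $A$ equals $\pm A$. Reading off the cofactors of the last row (equivalently, using $v\times w=\pm u$) yields $a_2b_3-a_3b_2=\pm c_1$, $a_3b_1-a_1b_3=\pm c_2$, $a_1b_2-a_2b_1=\pm c_3$. Squaring and substituting gives $\mathrm{sec}(\s)=c_1^2\l_{23}+c_2^2\l_{13}+c_3^2\l_{12}$; as a sanity check, $u=e_1$ recovers $\mathrm{sec}(\mathrm{span}\{e_2,e_3\})=\l_{23}$.

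I do not expect a serious obstacle: the lemma is essentially bookkeeping once one observes that \eqref{Ricci_diag} lists exactly the mixed curvature components up to symmetry. The only steps requiring mild care are the symmetry reductions identifying each mixed component with one of those three, and the (harmless, since only squares enter) sign tracking in the cofactor/cross-product step. A slicker, coordinate-free variant passes to the curvature operator $\hat R$ on $\Lambda^2 T_pM$: under the Hodge star the plane $\s$ corresponds to the bivector $*u$, so $\mathrm{sec}(\s)=\langle \hat R(*u),*u\rangle$ is a quadratic form in $u\in T_pM$, and \eqref{Ricci_diag} is precisely the statement that this form is diagonalized by $e_1,e_2,e_3$ with eigenvalues $\l_{23},\l_{13},\l_{12}$.
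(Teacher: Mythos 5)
Your proof is correct and complete. The paper does not prove this lemma itself --- it defers to \cite[Lemma 2.2]{MR3298721} --- so there is no in-paper argument to compare with; your direct expansion of $R(v,w,w,v)$ in the Ricci-diagonalizing frame, with all mixed curvature components reduced by the symmetries of $R$ to the three quantities in (\ref{Ricci_diag}) and hence vanishing, followed by the cofactor/cross-product identification $(a_ib_j-a_jb_i)^2=c_k^2$, is the standard verification, and your closing remark about the curvature operator on $\Lambda^2T_pM$ and the Hodge star is essentially the viewpoint taken in the cited source.
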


\subsection{A Partition of $M$}

Fix $\e=-1, 0, 1$. Recall that $M$ is said to have higher rank if every geodesic $\g:\mathbb{R}\to M$ admits a parallel field $V(t)$ perpendicular to $\g'(t)$ such that
$\mathrm{sec}(V(t), \g'(t))\equiv \e$. 
The following pointwise version of the rank assumption was introduced in  \cite{MR3298721} to study the local structure of manifolds of higher rank. 

\begin{dfn}
A Riemannian manifold has constant vector curvature $\e$, denoted by $\mathrm{cvc}(\e)$,
if every tangent vector is contained in a curvature $\e$ plane.
\end{dfn}
Clearly, manifolds of higher rank have $\mathrm{cvc}(\e)$. For $\mathrm{cvc}(\e)$-manifolds, Lemma \ref{sectional} and (\ref{per}) imply that at least one of the $\l_{ij}$ equals $\e$.  Throughout the remainder of the paper, we assume that indices have been chosen such that
$\l_{12}=\e$. Letting $\l=\l_{13}$ and $\L=\l_{23}$, we then have
\begin{equation} \label{order}
\l_{13}=\l \leq \e \leq \L=\l_{23}.
\end{equation}

\begin{dfn}
A point $p\in M$ is said to be 
\begin{enumerate}
\item \textit{isotropic} if $\l=\L=\e$,
\item \textit{extremal} if precisely one of $\l$ or $\L$ equals $\e$,
\item \textit{generic} if $\l<\e<\L$.
\end{enumerate}
\end{dfn}
Throughout, let $\mathcal{I}, \mathcal{E}, \mathcal{O}$ denote the set of isotropic, extremal, and generic points respectively.
A three-manifold $M$ of higher rank is the disjoint union 
$$M=\mathcal{I}\cup \mathcal{E}\cup \mathcal{O}.$$ 
As the sectional curvature is constant on $\mathcal{I}$, rank rigidity occurs when $ \mathcal{E}=\mathcal{O}=\emptyset$.

Note that for each $p\in \mathcal{E}$, either $\l<\e=\L$ or $\l=\e<\L$ at $p$. This motivates the following partition of $\mathcal{E}$. 
\begin{dfn} \label{eplus_def}
Define subsets $\mathcal{E}_-$ and $\mathcal{E}_+$ of $\mathcal{E}$ by
$$\mathcal{E}_-=\{p\in \mathcal{E}\,|\, \l=\e\}$$ and
$$\mathcal{E}_+=\{p\in \mathcal{E}\,|\, \L=\e\}.$$
\end{dfn} 

 When $p$ is not an isotropic point, one can apply
Lemma \ref{sectional} to characterize the curvature $\e$ planes in $T_pM$ in terms of their normal vectors.
\begin {lem}  \label {lem1}

 Let $p\in\mathcal{E}\cup \mathcal{O}$ and let $\s$ be a 2-plane in $T_p M$ with unit normal vector $u=c_1e_1+c_2e_2+c_3e_3$. Then $\mathrm{sec}(\s)=\e$ if and only if 
 \begin{enumerate}
 \item $c_1=0$ when $p\in \mathcal{E}_-$,
 \item$c_2=0$ when $p\in \mathcal{E}_+$,
  \item $c_1=\pm mc_2$ when $p\in \mathcal{O}$, where $m=\sqrt \frac{\e-\l}{\L-\e}$. 
  \end{enumerate}
\end {lem}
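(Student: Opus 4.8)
The plan is to reduce everything to a direct computation with Lemma~\ref{sectional}. Fix $p\in\mathcal{E}\cup\mathcal{O}$ and a unit normal vector $u=c_1e_1+c_2e_2+c_3e_3$, so $c_1^2+c_2^2+c_3^2=1$. By Lemma~\ref{sectional} together with our standing convention $\l_{12}=\e$, $\l_{13}=\l$, $\l_{23}=\L$ from~(\ref{order}), we have
\[
\mathrm{sec}(\s)=c_1^2\L+c_2^2\l+c_3^2\e.
\]
Using $c_3^2=1-c_1^2-c_2^2$, this rearranges to
\[
\mathrm{sec}(\s)-\e=c_1^2(\L-\e)-c_2^2(\e-\l).
\]
So $\mathrm{sec}(\s)=\e$ if and only if $c_1^2(\L-\e)=c_2^2(\e-\l)$. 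The three cases then fall out by examining the signs of $\L-\e$ and $\e-\l$, which are governed by which stratum $p$ lies in.

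First I would treat $p\in\mathcal{E}_-$. Here $\l=\e<\L$ by Definition~\ref{eplus_def}, so $\e-\l=0$ and $\L-\e>0$; the equation $c_1^2(\L-\e)=0$ forces $c_1=0$, and conversely $c_1=0$ clearly gives $\mathrm{sec}(\s)=\e$. Symmetrically, for $p\in\mathcal{E}_+$ we have $\L=\e$ and $\l<\e$, so $\L-\e=0$ and $\e-\l>0$; the equation becomes $c_2^2(\e-\l)=0$, forcing $c_2=0$, and the converse is immediate. Finally, for $p\in\mathcal{O}$ we have $\l<\e<\L$, so both $\L-\e$ and $\e-\l$ are strictly positive, and the equation $c_1^2(\L-\e)=c_2^2(\e-\l)$ is equivalent to $c_1^2=m^2c_2^2$ with $m^2=\frac{\e-\l}{\L-\e}>0$, i.e. $c_1=\pm m c_2$. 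This is exactly the claimed characterization.

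There is no serious obstacle here; the statement is essentially an unwinding of Lemma~\ref{sectional} under the index normalization~(\ref{order}) and the definitions of the strata. The only point requiring a sentence of care is the well-definedness of $m=\sqrt{\tfrac{\e-\l}{\L-\e}}$ in the generic case: one must note that $p\in\mathcal{O}$ guarantees $\L\neq\e$ so the denominator is nonzero, and that the quotient is positive so the square root is real and positive. I would also remark that the normal vector of a $2$-plane is determined only up to sign, which is consistent with the $\pm$ in case~(3) and with the fact that cases~(1) and~(2) are sign-insensitive.
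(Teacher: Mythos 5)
Your proof is correct and is exactly the argument the paper intends: the paper simply declares the lemma ``immediate from Lemma~\ref{sectional},'' and your computation $\mathrm{sec}(\s)-\e=c_1^2(\L-\e)-c_2^2(\e-\l)$ together with the sign analysis on each stratum is the straightforward unwinding of that. No gaps; the remarks on well-definedness of $m$ and the sign ambiguity of the normal vector are fine but inessential.
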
  

\begin{proof}
Immediate from Lemma \ref{sectional}.
\end{proof}

We define the following disjoint partition of $U_pM$.

\begin{dfn}
A vector $X\in U_pM$ is
\begin{enumerate}
\item \textit{isocurved} if every 2-plane in $T_pM$ containing $X$ has curvature $\e$,
\item \textit {unicurved} if $X$ is contained in only one curvature $\e$ 2-plane,
\item \textit{generic} if it is neither isocurved nor unicurved.
\end{enumerate}
\end{dfn}

\begin{dfn}
A geodesic $\g:(k, l)\to M$ is isocurved if $\g'(t)$ is isocurved for all $t\in(k,l)$.
\end{dfn}

Since the normal vectors of 2-planes containing $X$ form a two dimensional vector subspace, Lemma \ref {lem1} implies the 
following characterization of isocurved vectors.

\begin{cor} \label{isovector}
Let $p\in\mathcal{E}\cup \mathcal{O}$ and let $X=x_1e_1+x_2e_2+x_3e_3\in U_pM$. Then $X$ is isocurved if and only if
\begin{enumerate}
\item $x_1=\pm1$ if $p\in \mathcal{E}_-$,
\item $x_2=\pm1$ if $p\in \mathcal{E}_+$,
\item $x_3=0$ and $x_2 = \pm mx_1$ if $p\in \mathcal{O}$.
\end{enumerate}
\end{cor}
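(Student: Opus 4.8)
The plan is to derive everything from the characterization of curvature $\e$ planes in Lemma \ref{lem1}, since being isocurved is exactly the condition that \emph{every} plane through $X$ has curvature $\e$, equivalently that every unit normal vector $u$ orthogonal to $X$ satisfies the defining equation of Lemma \ref{lem1}. First I would set up the observation already noted in the excerpt: the set of unit normal vectors of 2-planes containing a fixed $X\in U_pM$ is precisely $U_pM\cap X^{\perp}$, a great circle in the two-dimensional subspace $X^\perp$. So $X$ is isocurved if and only if the linear condition from Lemma \ref{lem1} holds for \emph{all} $u\in X^\perp$, i.e.\ holds identically on a two-dimensional subspace.

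Next I would handle the three cases in turn. For $p\in\mathcal{E}_-$: Lemma \ref{lem1} says $\mathrm{sec}(\s)=\e$ iff the normal $u$ has vanishing $e_1$-component, i.e.\ $u\in\spn\{e_2,e_3\}$. For this to hold for every $u\in X^\perp$ we need $X^\perp\subseteq\spn\{e_2,e_3\}$, and since both are two-dimensional, $X^\perp=\spn\{e_2,e_3\}$, forcing $X=\pm e_1$, i.e.\ $x_1=\pm1$. The case $p\in\mathcal{E}_+$ is identical with the roles of $e_1$ and $e_2$ exchanged, giving $x_2=\pm1$. For $p\in\mathcal{O}$: Lemma \ref{lem1}(3) says $\mathrm{sec}(\s)=\e$ iff the normal vector lies in one of the two planes $P_\pm=\{c_1=\pm m c_2\}=\spn\{e_1\pm m e_2,\ e_3\}$ (using $c_1=\pm m c_2$ with $e_3$-component free). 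Now $X$ isocurved means the two-dimensional space $X^\perp$ is contained in $P_+\cup P_-$; but a union of two distinct 2-planes in a 3-space cannot contain a 2-plane unless that 2-plane equals $P_+$ or $P_-$ (a subspace contained in a union of two proper subspaces lies in one of them). Hence $X^\perp=P_+$ or $X^\perp=P_-$, so $X$ is (up to sign) the unit normal to $P_\pm$, namely $X\parallel m e_1\mp e_2$ after normalizing — wait, I should just compute: the normal to $\spn\{e_1+me_2,e_3\}$ is proportional to $me_1-e_2$, giving $x_3=0$ and $x_2=-m x_1$; the other plane gives $x_2=+m x_1$. In both subcases $x_3=0$ and $x_2=\pm m x_1$, which is the claimed condition.

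Finally I would note the converse direction is immediate by reversing each implication: if $X$ satisfies the stated coordinate condition then $X^\perp$ is exactly the relevant plane (or one of the two planes $P_\pm$), hence every $u\in X^\perp$ satisfies the Lemma \ref{lem1} criterion, so every plane through $X$ has curvature $\e$. I do not expect any genuine obstacle here — the statement is a direct corollary, as the excerpt itself indicates — but the one point requiring a small amount of care is the generic case, where one must justify that a 2-dimensional subspace contained in the union of the two planes $P_+\cup P_-$ must coincide with one of them; this is the elementary linear-algebra fact that a vector space is not the union of two proper subspaces, applied to $X^\perp\cap P_+$ and $X^\perp\cap P_-$.
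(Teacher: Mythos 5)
Your proof is correct and is essentially the paper's own argument: the corollary is deduced there from exactly your observation that the unit normals of the 2-planes containing $X$ sweep out the two-dimensional subspace $X^{\perp}$, combined with Lemma \ref{lem1} (your union-of-two-planes linear-algebra point being the only detail the paper leaves implicit in the generic case). One harmless slip: $\{c_1=\pm m c_2\}=\spn\{me_1\pm e_2,\,e_3\}$ rather than $\spn\{e_1\pm me_2,\,e_3\}$, but the normal of that plane is proportional to $e_1\mp me_2$, so the condition you ultimately read off, $x_3=0$ and $x_2=\pm m x_1$, is the correct one.
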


On the interior of generic set or on the interior of extremal set, it is possible to find a local orthonormal frame that satisfy (\ref{per}).
Let $\{e_1, e_2, e_3\}$ be such a local orthonormal frame. Throughout, we use the Christoffel symbol
$\G_{ij}^k$ to denote $<\n_{e_i} e_j, e_k>$.

\begin{lem}
For each $p\in \mathcal{O}$,
\begin{equation} \label{Bianchi_diag}
\G_{11}^3=m^2 \G_{22}^3
\end{equation}
\end{lem}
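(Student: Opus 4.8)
The plan is to derive (\ref{Bianchi_diag}) from the twice-contracted second Bianchi identity, in the form that the Einstein tensor is divergence free, i.e.\ $\mathrm{div}\,\mathrm{Ric}=\tfrac12\,d(\mathrm{scal})$. The conceptual point is that the $\mathrm{cvc}(\e)$ normalization $\l_{12}=\e$, combined with the fact that $\e$ is one of the fixed constants $-1,0,1$, pins a specific linear combination of the Ricci eigenvalues to a constant; feeding this into the Bianchi identity in the $e_3$-direction then isolates exactly the claimed relation between $\G_{11}^3$ and $\G_{22}^3$.

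First I would fix the local setup near $p\in\mathcal{O}$. In a Ricci-diagonalizing orthonormal frame $\{e_1,e_2,e_3\}$ satisfying (\ref{per}), the Ricci eigenvalues are
\begin{equation*}
r_1=\mathrm{Ric}(e_1,e_1)=\e+\l,\qquad r_2=\mathrm{Ric}(e_2,e_2)=\e+\L,\qquad r_3=\mathrm{Ric}(e_3,e_3)=\l+\L,
\end{equation*}
and on $\mathcal{O}$ these satisfy $r_1<r_3<r_2$; in particular they are pairwise distinct there, so such a frame exists and is smooth near $p$, and $\mathrm{Ric}$ is diagonal in it.

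Next I would expand $\mathrm{div}\,\mathrm{Ric}$ in this frame. Using only metric compatibility (which gives $\G_{ii}^i=0$ and $\G_{ab}^a=-\G_{aa}^b$ for $a\neq b$) together with the diagonal form of $\mathrm{Ric}$, a short computation yields
\begin{equation*}
(\mathrm{div}\,\mathrm{Ric})(e_3)=e_3(r_3)+\G_{11}^3(r_1-r_3)+\G_{22}^3(r_2-r_3),
\end{equation*}
whereas $\tfrac12\,d(\mathrm{scal})(e_3)=\tfrac12\,e_3(r_1+r_2+r_3)$. Equating the two and cancelling the $e_3(r_3)$ terms leaves
\begin{equation*}
\G_{11}^3(r_1-r_3)+\G_{22}^3(r_2-r_3)=\tfrac12\,e_3(r_1+r_2-r_3).
\end{equation*}
Now $r_1+r_2-r_3=2\l_{12}=2\e$ is constant, so the right-hand side vanishes; and $r_1-r_3=\e-\L=-(\L-\e)$ while $r_2-r_3=\e-\l$. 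Hence $\G_{11}^3(\L-\e)=\G_{22}^3(\e-\l)$, and since $\L-\e>0$ on $\mathcal{O}$ we may divide to conclude $\G_{11}^3=\frac{\e-\l}{\L-\e}\,\G_{22}^3=m^2\G_{22}^3$.

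I do not expect a serious obstacle here: the argument is essentially a direct computation. The only points requiring care are the bookkeeping of Christoffel symbols when differentiating a diagonal symmetric $2$-tensor in a non-holonomic orthonormal frame, and recognising — which is the real content — that it is precisely the combination $r_1+r_2-r_3=2\l_{12}$ that the $\mathrm{cvc}(\e)$ normalization forces to be the constant $2\e$.
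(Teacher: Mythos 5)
Your argument is correct, and it reaches (\ref{Bianchi_diag}) by a genuinely different (though closely related) route from the paper. The paper applies one single component of the uncontracted second Bianchi identity, namely $(\n_{e_3}R)(e_1,e_2,e_2,e_1)+(\n_{e_1}R)(e_2,e_3,e_2,e_1)+(\n_{e_2}R)(e_3,e_1,e_2,e_1)=0$, and simplifies it using the Ricci-diagonalizing condition (\ref{Ricci_diag}) together with $\l_{12}\equiv\e$; the surviving terms are exactly $\G_{11}^3(\L-\e)$ and $\G_{22}^3(\e-\l)$, whose ratio gives $m^2$. You instead use the twice-contracted identity $\mathrm{div}\,\mathrm{Ric}=\tfrac12\,d(\mathrm{scal})$ evaluated on $e_3$, after expressing the Ricci eigenvalues as $r_1=\e+\l$, $r_2=\e+\L$, $r_3=\l+\L$; the cancellation of all derivative terms then comes from the single identity $r_1+r_2-r_3=2\l_{12}=2\e$, which is constant on $\mathcal{O}$ because the frame convention (\ref{order}) holds on the open set $\mathcal{O}$, and the remaining algebraic relation $\G_{11}^3(\L-\e)=\G_{22}^3(\e-\l)$ is the desired one since $\L-\e>0$ there. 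In dimension three the two identities carry essentially the same information (the contracted identity is a sum of components like the one the paper uses), but your formulation is arguably cleaner to verify, since the divergence formula for a diagonal symmetric $2$-tensor in an orthonormal frame packages all the Christoffel bookkeeping at once, whereas the paper's version requires checking which curvature components vanish by (\ref{Ricci_diag}) term by term; the paper's version, on the other hand, makes visible exactly which Bianchi component is being used, a pattern it reuses later (e.g.\ in Proposition \ref{evo_tr}). Your identification of $r_1+r_2-r_3=2\l_{12}$ as the quantity forced constant by the $\mathrm{cvc}(\e)$ normalization is indeed the real content, and your Christoffel conventions ($\G_{ij}^k=-\G_{ik}^j$, $\G_{ii}^i=0$) match the paper's, so the computation goes through as stated.
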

\begin{proof}
The lemma follows from (\ref{Ricci_diag}) and the differential Bianchi identity 
$$(\n_{e_3})R(e_1,e_2,e_2,e_1)+(\n_{e_1})R(e_2,e_3,e_2,e_1)+(\n_{e_2})R(e_3,e_1,e_2,e_1)=0.$$ 
\end{proof}

\begin{rem}\label{uniqueness}
As the eigenvalues of the Ricci tensor are distinct at points in $\mathcal{O}$, the eigenspces define three distinct \textit{global} smooth line fields on $\mathcal{O}$. Given a connected component $\mathcal{C}$ of $\mathcal{O}$, each of these line fields may or may not be orientable over $\mathcal{C}$. In the case where one of these line field is not orientable on $\mathcal{C}$, it resolves to an oriented line field on some double cover of $\mathcal{C}$.
As the rank assumption and finite volume assumption passes to finite covers, we may always assume that 
there exists a \textit{global} Ricci diagonalizing orthonormal frame $\{e_1, e_2, e_3\}$ on $\mathcal{C}$ (the finite volume
assumption is required for the proof of hyberbolic rank rigidity).
\end{rem}

\subsection{Manifolds of Extremal Curvature}

 
\begin{dfn}
A three-manifold $M$ of $\mathrm{cvc} (\e)$ is said to have extremal curvature if for every $p\in M$, $\mathrm{sec}_p\geq \e$ or $\mathrm{sec}_p\leq \e$. 
\end{dfn}

The definition is equivalent to $\mathcal{O}=\emptyset$, or equivalently, $M=\mathcal{E}\cup \mathcal{I}$. The following proposition, a structural result used in subsequent sections, is proved in \cite{MR0322740} or \cite[Corollary 2.10]{MR3298721}.

\begin{prop} \label{extreme_structure}
Let $M$ be a three-manifold of $\mathrm{cvc}(\e)$ and with extremal curvature. 
For each connected component of $\mathcal{C}$ of $\mathcal{E}$, there exists a complete geodesic field $E$ on $\mathcal{C}$ or a double cover of $\mathcal{C}$ whose integral curves are isocurved geodesics.
\end{prop}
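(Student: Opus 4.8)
The plan is to exhibit the isocurved directions on $\mathcal{E}$ as a smooth line field and to show that this line field has complete geodesic integral curves, the extremal-curvature hypothesis being exactly what makes $\e$ a pointwise extreme value of the sectional curvature. First, since $M$ has extremal curvature we have $\mathcal{O}=\emptyset$ and $M=\mathcal{I}\cup\mathcal{E}$; moreover $\mathcal{E}_+=\{p:\l<\e\}$ and $\mathcal{E}_-=\{p:\L>\e\}$ (if $\l<\e$ at $p$ then $p\notin\mathcal{I}\cup\mathcal{O}$, so $p\in\mathcal{E}$ and necessarily $\L=\e$, and conversely), so $\mathcal{E}=\mathcal{E}_-\sqcup\mathcal{E}_+$ is open and every connected component of $\mathcal{E}$ lies in $\mathcal{E}_+$ or in $\mathcal{E}_-$. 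On $\mathcal{E}$ the Ricci eigenvalues are $\e+\l$, $\e+\L$, $\l+\L$, and since exactly one of $\l,\L$ equals $\e$ there, exactly one eigenvalue is simple, namely $2\e$; by Corollary \ref{isovector} the isocurved vectors at a point of $\mathcal{E}$ are precisely the two unit eigenvectors of this simple eigenvalue. Hence the isocurved directions form a smooth rank-one distribution $\mathcal{N}$ on $\mathcal{E}$, whereas at each point of $\mathcal{I}$ every vector is isocurved; in particular $\mathcal{E}$ is exactly the open subset of $M$ on which the isocurved (``$\e$-nullity'') distribution attains its minimal dimension, $1$.

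The crux is to show that $\mathcal{N}$ is a complete geodesic distribution: if $\g$ is a unit-speed geodesic of $M$ with $\g'(0)$ isocurved at a point of $\mathcal{E}$, then $\g(t)\in\mathcal{E}$ and $\g'(t)\in\mathcal{N}_{\g(t)}$ for all $t\in\mathbb{R}$. This is the structure statement recorded in \cite[Corollary 2.10]{MR3298721} (see also \cite{MR0322740}); the mechanism is that isocurvedness of $\g'(0)$ says the Jacobi operator $w\mapsto R(w,\g'(0))\g'(0)$ equals $\e\cdot\mathrm{Id}$ on $\g'(0)^{\perp}$, while on $\mathcal{E}_+$ (resp. $\mathcal{E}_-$) the sectional curvature is pointwise $\le\e$ (resp. $\ge\e$) with equality exactly on the $2$-planes through the isocurved line. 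So $\e$ is a pointwise extremum of the curvature along those planes; differentiating the curvature tensor along $\g$ together with the second Bianchi identity forces the isocurved condition to be preserved, and a standard Jacobi-field comparison along $\g$ then shows the leaf cannot reach the higher-nullity set $\mathcal{I}$ and is complete. This is precisely where the extremal-curvature hypothesis enters: without it $\mathcal{O}\ne\emptyset$ and $\mathcal{N}$ no longer has locally constant rank, which is why the analogous statement fails for general $\mathrm{cvc}(\e)$ manifolds.

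Finally I would descend to a connected component $\mathcal{C}$ of $\mathcal{E}$. If the line field $\mathcal{N}|_{\mathcal{C}}$ is orientable, choose a global unit section $E$; otherwise $\mathcal{N}$ becomes orientable on a connected double cover $\widetilde{\mathcal{C}}\to\mathcal{C}$, and choose a global unit section $E$ there. By the previous paragraph the integral curves of $E$ are complete geodesics contained in $\mathcal{C}$ (respectively $\widetilde{\mathcal{C}}$), and since $E$ is everywhere isocurved these integral curves are isocurved geodesics; hence $E$ is the desired complete geodesic field. The only genuine difficulty is the propagation-and-completeness step of the middle paragraph, which is the heart of the nullity-type structure theory being invoked.
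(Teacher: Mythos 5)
Your proposal is correct and in substance coincides with the paper's treatment: the paper does not prove this proposition internally but simply cites the same two sources you invoke (the classical nullity-space structure theory and Schmidt--Wolfson, Corollary 2.10) for the key propagation-and-completeness step, which you likewise defer to those references. Your surrounding details --- the isocurved directions are the eigendirections of the simple Ricci eigenvalue $2\e$ (equivalently the $\e$-nullity line) so they form a smooth line field on the open set $\mathcal{E}$, and non-orientability is handled by passing to a double cover of the component $\mathcal{C}$ --- are accurate and consistent with how the cited results yield the stated geodesic field.
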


The structural result leads to the following rigidity theorems proved in \cite[Theorem 1.2]{MR3298721} and \cite[Theorem 3.5]{MR3298721} respectively.
\begin{thm} \label{cvc}
A finite volume complete Riemannian three-manifold with extremal curvature -1 and higher hyperbolic rank is hyperbolic.
\end{thm}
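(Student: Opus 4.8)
The plan is to prove that under these hypotheses $\mathcal{E}=\emptyset$, so that $M=\mathcal{I}$ has constant sectional curvature $-1$ and is therefore a hyperbolic space form. Since extremal curvature is equivalent to $\mathcal{O}=\emptyset$, we have a disjoint decomposition $M=\mathcal{I}\sqcup\mathcal{E}_-\sqcup\mathcal{E}_+$ with $\mathrm{sec}\equiv-1$ on $\mathcal{I}$. Arguing by contradiction, suppose $\mathcal{E}\neq\emptyset$ and fix a connected component $\mathcal{C}$ of $\mathcal{E}$. After passing to a double cover of $\mathcal{C}$ if necessary (the higher rank and finite volume hypotheses persist under finite covers), Proposition~\ref{extreme_structure} provides a complete unit geodesic field $E$ on $\mathcal{C}$ whose integral curves are isocurved geodesics; by Corollary~\ref{isovector}, $E=\pm e_1$ when $\mathcal{C}\subseteq\mathcal{E}_-$ and $E=\pm e_2$ when $\mathcal{C}\subseteq\mathcal{E}_+$.

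The first step is to convert the hyperbolic rank-one hypothesis into a pointwise curvature identity. Let $\g\colon\mathbb{R}\to M$ be an arbitrary geodesic and let $V$ be a unit parallel normal field along $\g$ with $\mathrm{sec}(V(t),\g'(t))\equiv-1$, supplied by the rank hypothesis. For each $t$ the point $\g(t)$ satisfies $\mathrm{sec}_{\g(t)}\geq-1$ or $\mathrm{sec}_{\g(t)}\leq-1$, so among all $2$-planes through $\g'(t)$ the plane $V(t)\wedge\g'(t)$ realizes either the minimum or the maximum sectional curvature; differentiating the sectional curvature over the pencil of planes through $\g'(t)$ at this critical plane forces $\langle R(V(t),\g'(t))\g'(t),\,\g'(t)\times V(t)\rangle=0$, and hence $R(V(t),\g'(t))\g'(t)=-V(t)$ for all $t$. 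Consequently $e^{-t}V(t)$ and $e^{t}V(t)$ are Jacobi fields along $\g$; in particular every geodesic of $M$ carries a nowhere-vanishing stable Jacobi field of the form $e^{-t}V(t)$ with $V$ parallel.

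The second step is to exploit this against the structure of $\mathcal{C}$. Choosing a Ricci-diagonalizing frame $\{e_1,e_2,e_3\}$ adapted to $E$ and recording the evolution along an integral curve of $E$ of the Christoffel symbols $\G_{ij}^k$ and of the non-trivial curvature function ($\l$ on $\mathcal{E}_+$, $\L$ on $\mathcal{E}_-$) — equivalently, the Riccati equation for the shape operator of the orthogonal distribution $E^{\perp}$ — one finds that the identity $R(V,\g')\g'=-V$, applied to geodesics transverse to $E$, over-determines this system and yields a monotonicity statement for that curvature function along each isocurved geodesic. Finally, finiteness of the volume prevents such a non-constant monotone quantity from persisting along the complete isocurved geodesics that sweep out $\mathcal{C}$ (via a recurrence or integral argument that uses the finiteness of the total volume of $M$). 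Constancy then forces $\l=\L=-1$ throughout $\mathcal{C}$, contradicting $\mathcal{C}\subseteq\mathcal{E}$. Hence $\mathcal{E}=\emptyset$, and $M$ is a hyperbolic space form.

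The crux is the second step. The identity $R(V,\g')\g'=-V$ by itself does not force constant curvature — the infinite-volume higher hyperbolic rank metrics on $\mathbb{R}^d$ alluded to in the introduction satisfy it — so one must isolate the right monotone quantity along the isocurved geodesics and then bring in finite volume in an essential way. Making precise the case distinction between $\mathcal{E}_-$ (where $E=\pm e_1$ and $\L>-1$) and $\mathcal{E}_+$ (where $E=\pm e_2$ and $\l<-1$), and controlling the behavior of $\mathcal{C}$ near the isotropic set $\mathcal{I}$, are the remaining technical hurdles.
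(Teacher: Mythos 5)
Your first step is fine: at a point where $\mathrm{sec}\geq -1$ or $\mathrm{sec}\leq -1$, a curvature $-1$ plane through $\g'$ is a critical point of the curvature on the pencil of planes containing $\g'$, so the parallel field $V$ supplied by the rank hypothesis satisfies $R(V,\g')\g'=-V$ and $e^{\pm t}V$ are Jacobi fields. This is correct and consistent with how extremal curvature is exploited elsewhere. But the proof stops exactly where the theorem actually lives. Your second step never identifies the quantity that is controlled, never derives its evolution equation, and never says how finite volume produces the contradiction; you yourself label this ``the crux'' and ``the remaining technical hurdles.'' Asserting that the identity $R(V,\g')\g'=-V$ ``over-determines the system and yields a monotonicity statement'' for $\l$ or $\L$ is not an argument, and as you note the identity alone is satisfied by non-symmetric infinite-volume examples, so no purely pointwise consequence of it can finish the job. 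In particular the claim that the contradiction closes by ``constancy forces $\l=\L=-1$ on $\mathcal{C}$'' is unsupported.

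For comparison, the paper's proof makes this step completely concrete. Along the complete isocurved geodesic field $E$ on $\mathcal{C}$ (or its double cover) one sets $\tilde{A}=\n E$ on $E^{\perp}$ and proves the Riccati-type equation $E(\mathrm{tr}\,\tilde{A})=2-\tfrac{1}{2}(\mathrm{tr}\,\tilde{A})^{2}$ (Proposition \ref{evo_extreme} with $\e=-1$). Its derivation does not come from the Jacobi-field identity but from the Christoffel relations of Proposition \ref{point_e} (e.g.\ $\G_{11}^2=\G_{11}^3=\G_{21}^3=\G_{31}^2=0$ and $\G_{33}^1=\G_{22}^1$), which are obtained by running the rank hypothesis through the parallel-field equations along \emph{unicurved} geodesics (Lemma \ref{parallel_e}), together with the curvature identity $R(e_1,e_3,e_3,e_1)=\e$ for the isocurved direction. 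Completeness of the $E$-geodesics and smoothness of $\mathrm{tr}\,\tilde{A}$ then trap $\mathrm{tr}\,\tilde{A}$ in $[-2,2]$ (otherwise the ODE blows up in finite time), and on a suitable open set either $\mathrm{tr}\,\tilde{A}>-2$, in which case $\mathrm{tr}\,\tilde{A}\to 2$ along the forward flow, or $\mathrm{tr}\,\tilde{A}\equiv-2$, in which case one flows backward. Since $\mathrm{tr}\,\tilde{A}=\mathrm{div}(E)=\frac{d}{dt}\int_{\phi_t(U)}d\mathrm{vol}$, the flow expands volume without bound, contradicting finite volume; this is how finite volume enters, not through recurrence or monotonicity of a curvature function. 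Until you produce an explicit evolution equation for a specific quantity (your Jacobi fields could conceivably be used to re-derive the same Riccati equation for the shape operator of $E^{\perp}$, but you have not done so) and run the volume-growth argument, the proposal has a genuine gap.
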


\begin{thm} \label{cvc_hom}
Assume that $M$ is a connected, simply-connected, complete and homogeneous three-manifold with extremal curvature -1. If $M$ has higher hyperbolic rank, then 
$M$ is isometric to the three dimensional hyperbolic space. 
\end{thm}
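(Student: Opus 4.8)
The plan is to use homogeneity to reduce $M$ to one of three homogeneous types and then to eliminate the two that are not hyperbolic space. On a homogeneous three-manifold the extreme sectional curvatures $\l=\min_\s\mathrm{sec}$ and $\L=\max_\s\mathrm{sec}$ are global constants, so by Lemma \ref{sectional} either $M=\mathcal I$, or $M=\mathcal E_-$, or $M=\mathcal E_+$ (the generic set is empty by the extremal curvature hypothesis). In the first case $M$ has constant sectional curvature $\e=-1$ and, being complete, simply connected and three-dimensional, is isometric to $\mathbb H^3$; so it suffices to derive a contradiction from $M=\mathcal E_\pm$. Assume $M=\mathcal E_+$, the case $M=\mathcal E_-$ being the same with $e_1$ in place of $e_2$. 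Let $N$ be the unit eigenvector field of the simple Ricci eigenvalue $-2$, which is globally defined because $M$ is simply connected; then $N=\pm e_2$, Corollary \ref{isovector} shows the isocurved vectors are exactly $\pm N$, Lemma \ref{lem1} shows the curvature $\e$ planes are exactly the planes containing $N$, and Proposition \ref{extreme_structure} shows $N$ is a geodesic field, so $\n_N N=0$.

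The heart of the argument is to prove that $\n_X N\in\spn(X,N)$ for every $X$. To this end I would fix a geodesic $\g$ that is not an integral curve of $N$; by uniqueness of geodesics $\g'(t)\neq\pm N$ for all $t$, so each $\g'(t)$ is unicurved and the unique curvature $\e$ plane through it is $\spn(\g'(t),N)$. The higher rank hypothesis furnishes a parallel unit normal field $V$ along $\g$ with $\mathrm{sec}(V(t),\g'(t))\equiv\e$, which forces $V(t)$ to be, up to a sign that is constant along $\g$ by continuity, the unit projection of $N$ onto $\g'(t)^\perp$. Differentiating the orthogonal decomposition $N=\langle N,\g'\rangle\g'+\langle N,V\rangle V$ along $\g$ and using that $\g'$ and $V$ are parallel yields $\n_{\g'}N\in\spn(\g',V)=\spn(\g',N)$; letting $\g$ vary over all such geodesics and using continuity gives the claim. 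Since $\n_X N\perp N$ for $X\perp N$, the endomorphism $X\mapsto\n_X N$ of $N^\perp$ has every vector as an eigenvector, hence equals $h\,\mathrm{Id}$ for a smooth function $h$, and so $\n_X N=h\cdot\mathrm{proj}_{N^\perp}X$ for all $X$.

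Consequently $N^\perp$ is integrable, its leaves are totally umbilic, and $N$ is a unit geodesic field normal to them, so $M$ is a warped product with the $N$-direction as base. Homogeneity makes $h$ constant --- for example $\|\n N\|^2=2h^2$ is isometry invariant, hence constant, and $h$ is continuous on the connected $M$ --- so by completeness and simple connectedness $M\cong\mathbb R\times_{e^{ht}}L$ for a complete surface $L$. If $h=0$ then $N$ is parallel, $M=\mathbb R\times L$ is a Riemannian product, and every plane through $N=\p_t$ has curvature $0\neq\e$, contradicting that planes through $N$ have curvature $\e$. If $h\neq0$, the ambient curvature of a plane tangent to the slice at height $t$ is $K_Le^{-2ht}-h^2$, which is unbounded wherever the leaf curvature $K_L$ is nonzero; since a homogeneous manifold has bounded sectional curvature, $L$ is flat, and then $\mathbb R\times_{e^{ht}}L$ has constant sectional curvature $-h^2$, placing $M$ in $\mathcal I$ and contradicting $M=\mathcal E_+$. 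Therefore $M=\mathcal E_\pm$ cannot occur, $M=\mathcal I$, and $M$ is isometric to $\mathbb H^3$.

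I expect the main obstacle to be making the warped product step fully rigorous in the large: passing from the local totally umbilic foliation to an honest warped splitting $\mathbb R\times_\phi L$ over the whole real line, using completeness and $\pi_1(M)=0$, and confirming that homogeneity genuinely forces $h$ to be constant and the leaf to be flat. The transverse-geodesic argument is comparatively robust, but one should check that no geodesic is tangent to $N$ at an isolated parameter --- it cannot be, since $N$ is a geodesic field --- so that a transverse geodesic is unicurved at every parameter.
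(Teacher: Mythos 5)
Your proposal is correct in its essentials, but there is nothing internal to compare it with: the paper does not prove Theorem \ref{cvc_hom} at all, it imports it from \cite[Theorem 3.5]{MR3298721}, whose argument belongs to the Schmidt--Wolfson structure theory of $\mathrm{cvc}(\e)$ three-manifolds with extremal curvature rather than to the rank machinery developed here. So what you have written is an independent, self-contained proof, and its key steps check out: homogeneity makes $\l$ and $\L$ constant, so $M$ is entirely $\mathcal{I}$ or entirely $\mathcal{E}_\pm$; in the latter case $N$ (the unit field spanning the simple Ricci eigendirection, globally defined by simple connectedness) is geodesic, every unit $X\neq\pm N$ is unicurved with unique curvature-$\e$ plane $\spn(X,N)$, and your parallel-field argument along geodesics transverse to $N$ correctly gives $\n_X N\in\spn(X,N)$, hence $\n N=h\,\mathrm{Id}$ on $N^{\perp}$ with $h$ constant by homogeneity (note that isometries preserve $N$ only up to sign, which is exactly why your invariant $\|\n N\|^2=2h^2$ is the right one to use). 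The obstacle you flag --- globalizing the warped product --- is genuine but fillable by a standard argument: since $\n_N N=0$ and $\n_X N=hX$ on $N^{\perp}$, the one-form $N^{\flat}$ is closed, hence exact on the simply connected $M$, so $N=\n f$ for a distance function $f$ whose complete unit gradient flow trivializes $M\cong\mathbb{R}\times L$ with $\mathcal{L}_N g=2hg$ on $N^{\perp}$, i.e.\ the metric is $dt^2+e^{2ht}g_L$ globally. In fact you can bypass the splitting altogether: a direct computation from $\n N=h\,\mathrm{Id}$ gives $R(X,N)N=-h^2X$, so the curvature-$\e$ planes through $N$ force $h^2=1$; the flow of $N$ rescales the intrinsic leaf curvature by $e^{-2ht}$, while homogeneity and the Gauss equation (leaves are umbilic with shape operator $h\,\mathrm{Id}$) make that curvature the constant $\mathrm{sec}(N^{\perp})+h^2$, forcing $\mathrm{sec}(N^{\perp})=-h^2=\e$ and contradicting $M=\mathcal{E}_\pm$; your $h=0$ case is already immediate since parallel $N$ gives flat planes through $N$. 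With those details written out, your argument is a legitimate alternative proof of the cited theorem, with the rank hypothesis entering only through the transverse parallel fields.
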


\section{Local Structure in $\mathcal{O}$}  
This section studies the local structure of generic points in a three-manifold of higher rank. As the generic set $\mathcal{O}$ is open, we may consider a local orthonormal frame $\{e_1, e_2, e_3\}$ and satisfying (\ref{order}). With respect to this local orthonormal frame, 
\begin{equation}
\l<\e<\L. 
\end{equation}

Specifically, this section derives local equations satisfied by the Christoffel symbols with respect to this frame in a three-manifold of higher rank.
Roughly speaking, these equations will be derived as follows.

Lemma \ref{plane_o} below will show that each generic vector tangent to $\mathcal{O}$ belongs to exactly two curvature $\e$ planes. Generic vectors are characterized by an open condition on tangent vectors. 
Hence, given a geodesic with generic initial velocity vector, its velocity vectors remain generic for short time, and consequently, also belong to exactly two curvature $\e$ planes.  
As geodesics have higher rank, they are also contained in a \textit{parallel} family of curvature $\e$ planes. As a result, the parallel field equations apply to at least one of these 
two families of curvature $\e$ planes along generic geodesics. Essentially, these parallel field equations imply the desired Christoffel symbol relations.
 
\subsection{Pointwise Calculations}

Given a generic point $p\in \mathcal{O}$, each unit tangent vector $X\in U_pM$ may be expressed in the frame above as $X=x_1e_1+x_2e_2+x_3e_3$, where $x_1^2+x_2^2+x_3^2=1$. 
For simplicity, we will frequently write  $X=(x_1, x_2, x_3)$, omitting the frame vectors from our notation.


\begin{lem} \label{CS}
Let $p\in \mathcal{O}$. For $X=(x_1,x_2,x_3)\in U_pM$ a generic or unicurved unit vector, 
$$1+m^2-(m x_2-x_1)^2>0$$ and 
$$1+m^2-(-m x_2-x_1)^2>0.$$
\end{lem}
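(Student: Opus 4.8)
The plan is to prove the stronger statement that both quantities are nonnegative for \emph{every} unit vector $X\in U_pM$, and that each vanishes exactly when $X$ is isocurved; the lemma then follows at once, since by the trichotomy in the definition preceding Corollary \ref{isovector} a generic or unicurved vector is, in particular, not isocurved.

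First I would apply the Cauchy--Schwarz inequality to the vectors $(x_1,x_2)$ and $(-1,m)$ in $\mathbb{R}^2$, together with $x_1^2+x_2^2\leq x_1^2+x_2^2+x_3^2=1$, to obtain
$$(mx_2-x_1)^2\leq (1+m^2)(x_1^2+x_2^2)\leq 1+m^2,$$
and likewise, using $(x_1,x_2)$ and $(-1,-m)$,
$$(-mx_2-x_1)^2\leq (1+m^2)(x_1^2+x_2^2)\leq 1+m^2.$$
This already yields the non-strict versions $1+m^2-(mx_2-x_1)^2\geq 0$ and $1+m^2-(-mx_2-x_1)^2\geq 0$.

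Next I would examine when equality holds. Equality throughout the first chain forces $x_1^2+x_2^2=1$, hence $x_3=0$, and forces $(x_1,x_2)$ to be proportional to $(-1,m)$, hence $x_2=-mx_1$; combined with $x_1^2+x_2^2=1$ this is precisely the condition that $X$ be one of the isocurved vectors with the minus sign in Corollary \ref{isovector}. The identical argument shows equality in the second chain forces $x_3=0$ and $x_2=mx_1$, i.e.\ $X$ is an isocurved vector with the plus sign. (Since $p\in\mathcal{O}$, $m\neq 0$, so these two pairs are genuinely distinct, though this is not needed.) In either case, equality implies $X$ is isocurved.

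Finally, since $X$ is generic or unicurved it is not isocurved, so neither equality can occur and both displayed inequalities are strict, which is the assertion of the lemma. There is no real obstacle here; the only step needing care is the identification of the equality locus of each inequality with the set of isocurved vectors via the equality case of Cauchy--Schwarz.
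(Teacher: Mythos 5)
Your proof is correct and follows essentially the same route as the paper: Cauchy--Schwarz applied to $(x_1,x_2)$ and $(\mp 1, \pm m)$ together with $x_1^2+x_2^2\leq 1$, then identifying the equality locus ($x_3=0$ and $x_2=\mp mx_1$) with the isocurved vectors via Corollary \ref{isovector}. The only cosmetic difference is that you phrase it as a characterization of the equality case, whereas the paper simply notes that non-isocurvedness forces one of the two inequalities to be strict.
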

\begin{proof}
We prove the first inequality; the second inequality is obtained analogously after replacing $m$ by $-m$. 
Note that
\begin{equation} \label{cs}
(mx_2-x_1)^2=|(x_1, x_2)\cdot(-1,m)|^2 \leq (x_1^2+x_2^2)(1+m^2) \leq 1+m^2,
\end{equation}
where the first inequality is from Cauchy-Schwartz and the second inequality is from the fact that $X$ is a unit vector.
The statement of the lemma holds if one of these inequalities is strict.
The first inequality is strict unless $x_2=-mx_1$, and the second inequality is strict unless $x_3=0$. 
The lemma now follows from Corollary \ref{isovector}. 

\end{proof}

By Lemma \ref{CS}, we may associate to each generic or unicurved unit vector $X=(x_1,x_2,x_3)$ the numbers

\begin{equation} \label{K_def}
K=\sqrt{1+m^2-(m x_2-x_1)^2}
\end{equation}
and 
\begin{equation} \label{L_def}
L=\sqrt{1+m^2-(-m x_2-x_1)^2},
\end{equation}
and 
the following two unit vectors:
\begin {equation} \label {v_o}
V^+=\frac{1}{K}\big(1+mx_1x_2-x_1^2, mx_2^2-x_1x_2-m, (mx_2-x_1)x_3\big)
\end{equation}
and
\begin {equation}
\begin {split}
V^-=\frac{1}{L}\big(1-mx_1x_2-x_1^2, -mx_2^2-x_1x_2+m,(-mx_2-x_1)x_3\big).
\end{split}
\end{equation}
Direct computations show that $V^+, V^- \in U_pM$, and $V^+, V^- \perp X$. Note that the expression for $V^-$
can be obtained by changing every $m$ in the formula for $V^+$ to $-m$.


\begin {lem} \label {plane_o}
Let $p\in \mathcal{O}$ and let $X=(x_1, x_2,x_3)\in U_p M$ be a unicurved or generic unit vector and let $V=(v_1,v_2,v_3)$ be a unit vector perpendicular to $X$.
\begin {enumerate}

\item
If $x_3=0$, then $V^+= \pm V^-$. Moreover, $\mathrm{sec}(X,V)=\e$ if and only if $V=\pm V^+$.

\item 
If $x_3\neq0$, then $V^+$ and $V^-$ are linearly independent. Moreover, $\mathrm{sec}(X, V)=\e$ if and only if $V=\pm V^+$ or $V=\pm V^-$.
\end{enumerate}

\end{lem}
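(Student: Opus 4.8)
The plan is to establish this entirely by pointwise linear algebra in $T_pM$, using the explicit formulas for $V^+$, $V^-$, $K$, $L$ and the characterization of curvature $\e$ planes from Corollary~\ref{isovector} and Lemma~\ref{lem1}. First I would observe that, since $V^+$ and $V^-$ are already unit vectors orthogonal to $X$ (by the direct computation noted before the lemma), any unit vector $V\perp X$ with $\mathrm{sec}(X,V)=\e$ must be $\pm V^+$ or $\pm V^-$ provided we can show $V^\pm$ do span the full set of normals to curvature-$\e$ planes through $X$. The key is to compute the normal vector $u$ of the plane $\s=\mathrm{span}\{X,V\}$: if $V=(v_1,v_2,v_3)$, then $u = X\times V$ up to sign, and $\mathrm{sec}(\s)=\e$ happens exactly when $u$ satisfies the condition $c_1=\pm m c_2$ of Lemma~\ref{lem1}(3), i.e. $u_1 = \pm m u_2$ where $u = X\times V$.

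Next I would set up the two cases. For $x_3\neq 0$: I would show $V^+$ and $V^-$ are linearly independent by computing the relevant $2\times 2$ minors of the matrix with rows $V^+$, $V^-$ — the third coordinates are $(mx_2-x_1)x_3/K$ and $(-mx_2-x_1)x_3/L$, and combined with the first coordinates these give a nonzero determinant except in the degenerate cases excluded by Corollary~\ref{isovector} (namely $X$ isocurved), which are not generic/unicurved. Since $\{X, V^+, V^-\}$ then spans $T_pM$, the orthogonal complement $X^\perp$ is two-dimensional and is spanned by $V^+, V^-$; a unit vector $V\in X^\perp$ lies in a curvature-$\e$ plane with $X$ iff the normal $X\times V$ satisfies the Lemma~\ref{lem1}(3) condition. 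Writing $V = aV^+ + bV^-$, I would compute $X\times V = a(X\times V^+) + b(X\times V^-)$ and verify directly from the formulas that $X\times V^+$ has first coordinate equal to $+m$ times its second, while $X\times V^-$ has first coordinate equal to $-m$ times its second (this is the content of why $V^\pm$ were defined this way — they are built to be perpendicular to $X$ inside the two curvature-$\e$ planes guaranteed to exist). The condition $u_1 = \pm m u_2$ on $u = a(X\times V^+)+b(X\times V^-)$ then forces $ab=0$ after a short computation using $m\neq 0$, giving $V=\pm V^+$ or $V=\pm V^-$.

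For $x_3 = 0$: I would plug $x_3=0$ into the formulas and observe the third coordinates of both $V^+$ and $V^-$ vanish, so both lie in $\mathrm{span}\{e_1,e_2\}$, which is the orthogonal complement of $X=(x_1,x_2,0)$ together with $e_3$. Since $X^\perp$ is spanned by $e_3$ and the unique (up to sign) unit vector in $\mathrm{span}\{e_1,e_2\}$ orthogonal to $X$, and both $V^+, V^-$ are such vectors, we get $V^+ = \pm V^-$ (alternatively, I would just verify $K=L$ when $x_3=0$, since $(mx_2-x_1)^2=(mx_2+x_1)^2$ fails in general — so actually one must be more careful: when $x_3=0$ and $X$ is generic, Corollary~\ref{isovector}(3) says $x_2\neq \pm m x_1$, and a direct check shows $V^+$ and $V^-$ are parallel because both equal the unique unit normal to $X$ in the $e_1e_2$-plane up to scaling). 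Then the plane $\mathrm{span}\{X, V^+\}$ is exactly the $e_1e_2$-plane, which has curvature $\l_{12}=\e$, and any other $V\perp X$ involves an $e_3$-component, giving a plane with normal $u$ satisfying $u_3\neq 0$ and (by the generic condition) $u_1\neq\pm m u_2$, hence $\mathrm{sec}\neq\e$. The main obstacle I anticipate is purely computational: carefully verifying the cross-product identities $(X\times V^\pm)_1 = \pm m (X\times V^\pm)_2$ and the non-degeneracy claims without sign errors, since the formulas for $V^\pm$ are unwieldy; organizing this via the substitution $m\mapsto -m$ symmetry (so only one case need be computed in full) will be essential to keep it manageable.
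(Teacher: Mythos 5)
Your proposal is correct and takes essentially the same route as the paper: reduce $\mathrm{sec}(X,V)=\e$ via the normal $u=X\times V$ and Lemma \ref{lem1} to the two linear conditions $u_1=\pm m\,u_2$, identify their solution lines with $\pm V^{+}$ and $\pm V^{-}$, and check linear independence when $x_3\neq 0$; the paper merely organizes the endgame by solving the two linear systems explicitly and computing $3\times 3$ determinants instead of decomposing $V=aV^{+}+bV^{-}$. Two small points to tighten: forcing $ab=0$ also requires $(X\times V^{\pm})_2\neq 0$ (true, since otherwise $X\times V^{\pm}=\pm e_3$ and $\mathrm{span}\{X,V^{\pm}\}$ would be the $e_1e_2$-plane, impossible when $x_3\neq 0$), and no single $2\times 2$ minor of the pair $(V^{+},V^{-})$ is nonzero for every non-isocurved $X$ (the one from the first and third coordinates is proportional to $mx_2x_3$), so the independence claim should be verified from the rank of the full $2\times 3$ matrix or, as in the paper, from the determinant $-2mx_3\neq 0$ of the combined linear systems.
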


\begin {proof}
Let
$u=X \times V=(x_2v_3-x_3v_2, x_3v_1-x_1v_3, x_1v_2-x_2v_1)$. As $X$ and $V$ are unit vectors and $X\perp V$, $u$ is a unit 
normal vector to the plane spanned by $X$ and $V$.
By Lemma \ref{lem1}, $\mathrm{sec}(X, V)=\e$ if and only if
\begin {equation*} 
x_2v_3-x_3v_2=\pm m(x_1v_3-x_3v_1).
\end{equation*}
In other words, a unit vector $V=(v_1,v_2,v_3)$ satisfies $V\perp X$ and $\mathrm{sec}(X, V)=\e$ if and only if it satisfies
one of the following two linear systems:
\begin {equation} \label{plane_equation}
\begin{split}
&x_1v_1+x_2v_2+x_3v_3=0\\
& mx_3v_1-x_3v_2+(x_2-mx_1)v_3=0
\end{split}
\end{equation}
or
\begin {equation} \label{plane_equation_2}
\begin{split}
&x_1v_1+x_2v_2+x_3v_3=0\\
&- mx_3v_1-x_3v_2+(x_2+ mx_1)v_3=0.
\end{split}
\end{equation}

To prove the first assertion of (1), use the facts that $x_3=0$ and $x_1^2+x_2^2=1$ to show
$V^+=\pm (x_2, -x_1, 0)$
where $\pm$ is positive if and only if $x_2+mx_1>0$.
Similarly, $V^-=\pm  (x_2, -x_1, 0)$ where $\pm$ is positive if and only if $x_2-mx_1>0$.

To prove the second assertion of (1), note that when $x_3=0$ and $x_2\neq mx_1$, the system (\ref{plane_equation}) 
has a one dimensional solution set spanned by $V^+$. Similarly, when  $x_3=0$ and $x_2\neq -mx_1$ the system (\ref{plane_equation_2}) 
has a one dimensional solution set spanned by $V^-$.

To prove the second assertion of (2), note that system (\ref{plane_equation}) has a one dimensional solution since, for instance, 

\begin{equation} \label{matrix}
\begin{vmatrix} 
x_1 &x_2&x_3  \\
 mx_3 & -x_3 &x_2- mx_1\\
x_3& 0 &-x_1 
\end{vmatrix}=
x_3\neq0.
\end{equation}
Direct verification shows that the solution set is spanned by $V^+$. The same holds after replacing system (\ref{plane_equation}) with
(\ref{plane_equation_2}), $m$ with $-m$, and $V^+$ with $V^-$.

Finally, as
\begin{equation*} 
\begin{vmatrix} 
x_1 &x_2&x_3  \\
mx_3 & -x_3 &x_2-mx_1\\
-mx_3 & -x_3 &x_2+mx_1 
\end{vmatrix}=-2mx_3\neq0,
\end{equation*}
$V^+$ and $V^-$ are linearly independent, concluding the proof.
\end{proof}

\begin{rem} \label {genericity}
Lemma \ref{plane_o} implies that a vector $X=(x_1, x_2, x_3)\in U_pM$ is unicurved if and only if $x_3=0$ and $x_2 \neq \pm mx_1$, 
and is generic if and only if $x_3\neq0$. Furthermore, each generic vector is contained in exactly two curvature $\e$ planes.
\end{rem}

\subsection{Calculations Along Geodesics} 
Computations in this subsection are mostly done along geodesics. To be more precise,
let $\g:[0, \d]\to \mathcal{O}$ be a geodesic. We may write
$\g'(t)=(x_1(t), x_2(t), x_3(t))$. 
Quantities such as $V^+$ and $V^-$ defined on each tangent space now vary along the geodesic $\g(t)$. To simplify the notation,  we may not indicate 
that a certain quantity is a function of $t$. For example, we write $x_i$ instead of $x_i(t)$ and $m$ instead of $m(\g(t))$.

The reader may verify that 
\begin{equation} \label{K_prime}
K'=\frac{mm'-(mx_2-x_1)(mx_2-x_1)'}{K}
\end{equation} 
and
\begin{equation}
L'=\frac{mm'-(-mx_2-x_1)(-mx_2-x_1)'}{L}
\end{equation} 
where $K$ and $L$ are defined in (\ref{K_def}) and (\ref{L_def}).

\begin{lem} \label {parallel}
Let $\g:[0, \d]\rightarrow \mathcal{O}$ be a geodesic such that $\g'(t)$ is generic for all $t\in[0,\d]$. 
Then  either $V^+(t)$ or $V^-(t)$ is a parallel field along $\g$ for all 
$t\in [0, \d]$.
  
\end{lem}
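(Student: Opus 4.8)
The plan is to combine the higher rank hypothesis with the pointwise classification of curvature $\e$ planes from Lemma~\ref{plane_o}, and then use a connectedness argument to show that a single choice ($V^+$ or $V^-$) works along the whole geodesic $\g$. Since $M$ has higher rank, the geodesic $\g$ admits a parallel normal field $W(t)$ with $\mathrm{sec}(\g'(t), W(t)) \equiv \e$; after normalizing we may assume $|W(t)| \equiv 1$. Because each $\g'(t)$ is generic, Remark~\ref{genericity} (equivalently, part~(2) of Lemma~\ref{plane_o}) tells us that the only unit vectors perpendicular to $\g'(t)$ spanning a curvature $\e$ plane with it are $\pm V^+(t)$ and $\pm V^-(t)$. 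Hence for each $t$ we have $W(t) \in \{V^+(t), -V^+(t), V^-(t), -V^-(t)\}$.

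First I would observe that the four vectors $\pm V^+(t), \pm V^-(t)$ vary smoothly (indeed real-analytically in the relevant quantities) along $\g$: this follows from the explicit formulas (\ref{v_o}) and its $V^-$ analogue together with the strict positivity of $K$ and $L$ guaranteed by Lemma~\ref{CS}, which holds at every $t$ since $\g'(t)$ is generic. Next, by part~(2) of Lemma~\ref{plane_o}, $V^+(t)$ and $V^-(t)$ are linearly independent for all $t$ (as $x_3(t) \neq 0$), so in particular $V^+(t) \neq \pm V^-(t)$ for every $t$; thus the four vectors $\pm V^+, \pm V^-$ are genuinely distinct along $\g$, and the set of $t$ with $W(t) = V^+(t)$, the set with $W(t) = -V^+(t)$, the set with $W(t) = V^-(t)$, and the set with $W(t) = -V^-(t)$ partition $[0,\d]$ into four pieces. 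Each of these pieces is closed (it is the preimage of $0$ under a continuous function such as $t \mapsto |W(t) - V^+(t)|$), and by local uniqueness — if $W$ and $V^+$ agree at a point and both are continuous while $V^+, V^-, -V^+, -V^-$ are pairwise distinct nearby — each piece is also open. Since $[0,\d]$ is connected, exactly one of the four pieces is all of $[0,\d]$.

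If $W \equiv V^+$ or $W \equiv -V^+$ on $[0,\d]$, then since $W$ is parallel and $-V^+$ is parallel iff $V^+$ is, we conclude $V^+$ is parallel along $\g$; symmetrically in the other two cases $V^-$ is parallel. This is exactly the assertion of the lemma.

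The main obstacle, and the point deserving the most care, is the openness of each of the four pieces — i.e.\ ruling out that the parallel field $W$ ``jumps'' between the branch $V^+$ and the branch $V^-$ (or between $V^+$ and $-V^+$) at some interior time. The cleanest way around this is to note that $V^+(t)$ and $V^-(t)$ remain linearly independent, hence bounded away from each other (and from their negatives) uniformly on the compact interval $[0,\d]$, so the set where a given continuous choice equals $W$ is both open and closed; no genuine analytic subtlety arises once the explicit formulas and Lemma~\ref{CS} supply smoothness and non-degeneracy. One should double-check that the normalization of $W$ to unit length is legitimate — it is, because a parallel field has constant norm, and the curvature $\e$ condition is scale-invariant in $W$ — and that $W$ is nonzero, which holds since a parallel field vanishing at one point vanishes identically and would not span a plane with $\g'$.
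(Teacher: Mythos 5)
Your proposal is correct and follows essentially the same route as the paper: invoke higher rank to obtain a unit normal parallel field, use part (2) of Lemma~\ref{plane_o} to pin its value at each time to $\{\pm V^+(t),\pm V^-(t)\}$, and conclude by continuity. The paper compresses the final step into ``by continuity,'' whereas you spell out the open--closed/connectedness argument (using that $V^+$ and $V^-$ stay linearly independent since $x_3\neq 0$), which is exactly the justification the paper leaves implicit.
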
 

\begin{proof}
As $M$ has higher rank, there exists a unit normal parallel field $V(t)$ along $\g(t)$ such that 
$$sec(\g'(t), V(t))=\e \text{ } \forall t\in [0, \d].$$
As $\g'(t)$ is generic for each $t$,
part (2) of Lemma \ref{plane_o} implies that for each $\t\in[0, \d]$, $V(t)=\pm V^+(t)$ or $V(t)=\pm V^-(t)$.
By continuity, the lemma follows.

\end{proof}

\begin{lem} \label{computation}
Let $\g:[0, \d]\rightarrow \mathcal{O}$ be a geodesic whose velocity vectors have components $\g'(t)=(x_1(t), x_2(t), x_3(t))$.
Assume that $\g'(t)$ is generic for all $t \in [0, \d]$, then
$V^+(t)$ is parallel along $\g$ if and only if
\begin {equation} \label {computation_o}
\begin{split}
&\{m\G_{11}^3-m^2\G_{12}^3\}x_1^2\\
+&\{\G_{21}^3-m\G_{22}^3\}x_2^2\\
+&\{e_3(m)-\G_{31}^2\}x_3^2\\
+&\{\G_{11}^3-m\G_{12}^3+m\G_{21}^3-m^2\G_{22}^3\}x_1x_2\\
+&\{e_1(m)-(1+m^2)\G_{11}^2-m\G_{33}^1+m^2\G_{33}^2\}x_1x_3\\
+&\{e_2(m)+(1+m^2)\G_{22}^1-\G_{33}^1+m\G_{33}^2\}x_2x_3=0
\end{split}
\end{equation}
along $\g(t)$. 
\end{lem}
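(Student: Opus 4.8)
The plan is to express the parallelism of $V^+(t)$ along $\gamma$ as the vanishing of the covariant derivative $\nabla_{\gamma'}V^+$ and then expand this condition in the frame $\{e_1,e_2,e_3\}$. Since $V^+$ is a unit field perpendicular to $\gamma'$, and $\gamma$ is a geodesic so that $\nabla_{\gamma'}\gamma'=0$, the field $V^+$ is parallel if and only if $\nabla_{\gamma'}V^+$ is proportional to $\gamma'$; but $\langle \nabla_{\gamma'}V^+,\gamma'\rangle = -\langle V^+,\nabla_{\gamma'}\gamma'\rangle = 0$ and $\langle \nabla_{\gamma'}V^+, V^+\rangle = 0$, so in fact $V^+$ is parallel if and only if $\langle \nabla_{\gamma'}V^+, W\rangle = 0$ for the remaining direction $W = \gamma'\times V^+$ (equivalently, if and only if $\nabla_{\gamma'}V^+ = 0$). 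So I would fix a unit normal $W$ to $\mathrm{span}(\gamma', V^+)$ and reduce to a single scalar equation $\langle \nabla_{\gamma'}V^+, W\rangle = 0$.

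The main computation is then to write $V^+ = \tfrac1K(a_1,a_2,a_3)$ with $a_i = a_i(x_1,x_2,x_3,m)$ the polynomial entries appearing in (\ref{v_o}), and to expand
$$\nabla_{\gamma'}V^+ = \sum_k \Big( \gamma'(V^+_k) + \sum_{i,j} x_i V^+_j \,\Gamma_{ij}^k\Big) e_k.$$
Here $\gamma'(V^+_k)$ splits into the part coming from differentiating the coefficients $a_i$ along the geodesic — which by the chain rule involves $x_i'(t)$ and $e_i(m)$ (the derivative of $m$ along $\gamma$ being $\sum_i x_i e_i(m)$) — plus the $K'/K$ term, whose formula is recorded in (\ref{K_prime}). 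The geodesic equation $\nabla_{\gamma'}\gamma'=0$ gives $x_k' = -\sum_{i,j}x_i x_j \Gamma_{ij}^k$, which I would substitute to eliminate the $x_i'$. After taking the inner product with $W$ (or, equivalently, isolating the component of $\nabla_{\gamma'}V^+$ orthogonal to $\gamma'$ and $V^+$), clearing the common factor $1/K$, and collecting terms, every monomial in $x_1,x_2,x_3$ of total degree two should appear, with coefficients that are the bracketed Christoffel-symbol expressions displayed in (\ref{computation_o}). The identity $x_1^2+x_2^2+x_3^2=1$ and the relation (\ref{Bianchi_diag}), $\Gamma_{11}^3 = m^2\Gamma_{22}^3$, will be used to put the result in exactly the stated normal form; symmetries $\Gamma_{ij}^k = -\Gamma_{ik}^j$ (metric compatibility) and the vanishing of certain $\Gamma$'s forced by the Ricci-diagonalizing condition (\ref{Ricci_diag}) further simplify the coefficients.

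The serious obstacle is purely organizational rather than conceptual: the expansion produces a large number of terms, and one must carefully track which Christoffel symbols survive after imposing metric compatibility and the frame being Ricci-diagonalizing, and then verify that the coefficient of each of the six quadratic monomials $x_1^2, x_2^2, x_3^2, x_1x_2, x_1x_3, x_2x_3$ matches the claimed bracket. In particular, the cross terms $x_1x_3$ and $x_2x_3$ are where the derivatives $e_1(m), e_2(m)$ enter together with the "off-diagonal" symbols $\Gamma_{33}^1, \Gamma_{33}^2, \Gamma_{11}^2, \Gamma_{22}^1$, and getting the coefficients $(1+m^2)$ right requires bookkeeping the contributions of both $\gamma'(a_i)$ and the connection terms $x_i a_j \Gamma_{ij}^k$ without error. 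I would organize the calculation by first writing $\nabla_{\gamma'}V^+$ as a vector of three rational expressions in $(x_i, m, \Gamma, e_i(m))$, then dotting with $\gamma'\times V^+$, and only at the end invoking (\ref{Bianchi_diag}) and $\sum x_i^2 = 1$; the "only if" and "if" directions are simultaneous since the scalar equation is equivalent to $\nabla_{\gamma'}V^+ = 0$.
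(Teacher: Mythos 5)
Your proposal is correct, and its computational core --- expanding $\nabla_{\gamma'}V^+$ in the frame $\{e_1,e_2,e_3\}$, using $m'=\sum_i x_ie_i(m)$ and the formula (\ref{K_prime}) for $K'$, eliminating the $x_i'$ through the geodesic equation, and collecting the six quadratic monomials --- is exactly what the paper does. Where you genuinely differ is in how the reduction to a single scalar condition is justified: you note a priori that, since $\gamma$ is a geodesic and $V^+$ is a unit field perpendicular to $\gamma'$, the derivative $\nabla_{\gamma'}V^+$ is automatically orthogonal to both $\gamma'$ and $V^+$, so parallelism is equivalent to the one equation $\langle\nabla_{\gamma'}V^+,\gamma'\times V^+\rangle=0$. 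The paper instead writes out all three component equations (\ref{parallel_2}) and collapses them by hand: the first and second are shown equivalent using the identity $v_1-mv_2=K$ together with (\ref{K_prime}), and the second and third by substituting (\ref{geod_2}), the resulting expressions differing by the factor $x_3/(x_2+mx_1)$, which is where genericity enters. Your orthogonality observation buys a cleaner conceptual reason why only one scalar equation survives and removes part of that bookkeeping; the paper's route has the advantage that the surviving equation is already in a form from which (\ref{computation_o}) can be read off, whereas after contracting against $\gamma'\times V^+$ you must still check that the computed scalar is a nowhere-vanishing multiple of the left-hand side of (\ref{computation_o}) before the stated ``if and only if'' follows (the analogous factor in the paper is handled by the genericity of $\gamma'(t)$). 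Two minor remarks: the Bianchi relation (\ref{Bianchi_diag}) is not actually needed here --- the paper obtains (\ref{computation_o}) without it, and it is only invoked later, in Lemma \ref{argument_three}; and the real substance of the lemma is the coefficient bookkeeping itself, which your outline organizes plausibly but leaves to be executed, the paper's sequence of substitutions ((\ref{parallel_2}), then the geodesic equations, then (\ref{geod_2})) being one workable way to do it.
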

\begin{proof}

Writing $V^+(t)=(v_1(t),v_2(t),v_3(t))$ along $\g(t)$, we have that 
\begin{equation}
\begin{split} \label{v_def}
v_1&=\frac{1+mx_1x_2-x_1^2}{K}\\
v_2&=\frac{ mx_2^2-x_1x_2-m}{K}\\
v_3&=\frac{(mx_2-x_1)x_3}{K}
\end{split}
\end{equation}
by (\ref{v_o}). 

Direct computation using (\ref{K_def}) and (\ref{v_def}) shows that 
\begin{equation} \label {K_v}
v_1-mv_2=K.
\end{equation}
The vector field $V^+(t)$ is parallel if and only if it satisfies the parallel field equations:
\begin{equation} \label{parallel_eq}
0=\dfrac{DV^+}{dt}=\sum \limits_{k=1}^3(\dfrac{dv_k}{dt}+\sum \limits_{i,j} \G_{ij}^k x_iv_j)e_k
\end{equation}
where $i,j\in \{1,2,3\}$. In what follows, we prove the lemma by showing that equations (\ref{parallel_eq}) hold if and only if 
(\ref{computation_o}) holds.

As a first step, substitute (\ref{v_def}) into the undifferentiated terms in (\ref{parallel_eq}) to obtain


\begin{equation} \label{parallel_2}
\begin{split}
v_1'&+\dfrac{1}{K}\Big\{(mx_2-x_1)(\sum_{i,j}\G_{ij}^1x_ix_j)-m\sum_{i=1}^3 x_i\G_{i2}^1\Big\}=0,\\
v_2'&+\dfrac{1}{K}\Big\{(mx_2-x_1)(\sum_{i,j}\G_{ij}^2x_ix_j)+\sum_{i=1}^3 x_i\G_{i1}^2\Big\}=0,\\
v_3'&+\dfrac{1}{K}\Big\{(mx_2-x_1)(\sum_{i,j}\G_{ij}^3x_ix_j)+\sum_{i=1}^3 x_i(\G_{i1}^3-m\G_{i2}^3)\Big\}=0.
\end{split}
\end{equation}

As $\g$ is a geodesic, it satisfies the geodesic equations
\begin {equation} \label {geod}
\sum \limits_{k=1}^3(\dfrac{dx_k}{dt}+\sum \limits_{i,j} \G_{ij}^k x_ix_j)e_k=0.
\end{equation}

Next, use (\ref{geod}) in (\ref{parallel_2}) to obtain  

\begin{equation} \label{computation_1}
v_1 '-\dfrac{m(\sum_{i=1}^3x_i \G_{i2}^1)}{K}-\frac{mx_2-x_1}{K}x_1'=0,
\end{equation}

\begin{equation} \label{computation_2}
v_2 ' +\dfrac{(\sum_{i=1}^3x_i \G_{i1}^2)}{K}-\frac{mx_2-x_1}{K}x_2'=0,
\end{equation}
and
\begin{equation} \label{computation_3}
v_3' +\frac{\sum_{i=1}^3x_i \G_{i1}^3-m\sum_{i=1}^3x_i \G_{i2}^3}{K}
-\frac{mx_2-x_1}{K}x_3'=0.
\end{equation}

Now we claim that equations (\ref{computation_1}) and (\ref{computation_2}) are equivalent. Indeed, subtract
$m$ times the left hand side of (\ref{computation_2}) from the left hand side of (\ref{computation_1}), and use 
equality (\ref{K_v}) to obtain zero as follows:

\begin{equation*}
\begin{split}
&(v_1'-mv_2')-\dfrac{mx_2-x_1}{K}(x_1'-mx_2')\\
&=(v_1-mv_2)'+m'v_2-\dfrac{mx_2-x_1}{K}(x_1'-mx_2')\\
&=K'+m'v_2-\dfrac{mx_2-x_1}{K}(x_1'-mx_2')\\
&=\dfrac{1}{K}\{mm'-(mx_2-x_1)(mx_2-x_1)'\}+m'v_2+\dfrac{mx_2-x_1}{K}(mx_2'-x_1')\\
&=0.
\end{split}
\end{equation*}
Since $m\neq 0$, this claim follows. 

Next, we will show that (\ref{computation_2}) and (\ref{computation_3}) are equivalent. 
To prove this claim, first differentiate the $v_i$ using (\ref{K_prime}) and then substitute into (\ref{computation_2}) and (\ref{computation_3}) to obtain

\begin {equation} \label{comp_1}
\begin{split}
&\dfrac{1}{K^3}\big\{(x_1^2+x_2^2-1)m'+(mx_2'-x_1')(x_2+mx_1)\big\}\\
&+\dfrac{1}{K}(\sum_{i=1}^3x_i \G_{i1}^2)=0,
\end{split}
\end{equation}
and
\begin {equation} \label{comp_2}
\begin{split}
&\dfrac{1}{K^3}\big\{(x_2+mx_1)x_3m'+(1+m^2)(mx_2'-x_1')x_3\big\}\\
&+\dfrac{1}{K}(\sum_{i=1}^3x_i \G_{i1}^3-m\sum_{i=1}^3x_i \G_{i2}^3)=0.
\end{split}
\end{equation}

Use (\ref{geod}) to show that 
\begin{equation}\label{geod_2}
mx_2'-x_1'=-(x_2+mx_1)(\sum_{i=1}^3x_i \G_{i1}^2)-x_3(\sum_{i=1}^3x_i \G_{i1}^3-m\sum_{i=1}^3x_i \G_{i2}^3).
\end{equation}
Substitute (\ref{geod_2}) into (\ref{comp_1}) and (\ref{comp_2}) to obtain
\begin {equation} \label{computation_4}
\begin{split}
\dfrac{1}{K^3}&\{x_3^2m'-x_3^2(1+m^2)(\sum_{i=1}^3x_i \G_{i1}^2)\\
+&x_3(x_2+mx_1)(\sum_{i=1}^3x_i \G_{i1}^3-m\sum_{i=1}^3x_i \G_{i2}^3)\}=0,
\end{split}                 
\end{equation}
and
\begin {equation} \label{computation_5}
\begin{split}
\dfrac{1}{K^3}&\{x_3(x_2+mx_1)m'-x_3(x_2+mx_1)(1+m^2)(\sum_{i=1}^3x_i \G_{i1}^2)\\
+&(x_2+mx_1)^2(\sum_{i=1}^3x_i \G_{i1}^3-m\sum_{i=1}^3x_i \G_{i2}^3)\}=0.
\end{split}                 
\end{equation}
Observe that equation (\ref{computation_4}) and (\ref{computation_5}) differ by the nonzero factor $\frac{x_3}{x_2+mx_1}$,
concluding the proof of the claim.

Up to now, we have shown that the parallel field equations are equivalent to (\ref{computation_5}). 
Since $K\neq0$ and 
$\g'(t)$ is generic, (\ref{computation_5}) is equivalent to
\begin {equation} \label{computation_6}
\begin{split}
&x_3\{(m'-(1+m^2)(x_1\G_{11}^2+x_2\G_{21}^2+x_3\G_{31}^2)\\
&+(x_2+mx_1)\G_{31}^3-m(x_2+mx_1)\G_{32}^3)\}\\
&+(x_2+mx_1)(x_1\G_{11}^3+x_2\G_{21}^3-mx_1\G_{12}^3-mx_2\G_{22}^3)=0.
\end{split}
\end{equation}

To conclude the proof, note that (\ref{computation_o}) follows from (\ref{computation_6}), since $m'=x_1e_1(m)+x_2e_2(m)+x_3e_3(m)$.

\end{proof}

\begin{rem} \label{minus_equivalence}
With the same assumption as in Lemma \ref{computation}, an analogous proof replacing $m$ with $-m$ and $K$ with $L$ shows that $V^-(t)$ is parallel if and only if 
\begin {equation} \label {computation_minus}
\begin{split}
&\{-m\G_{11}^3-m^2\G_{12}^3\}x_1^2\\
+&\{\G_{21}^3+m\G_{22}^3\}x_2^2\\
+&\{-e_3(m)-\G_{31}^2\}x_3^2\\
+&\{\G_{11}^3+m\G_{12}^3-m\G_{21}^3-m^2\G_{22}^3\}x_1x_2\\
+&\{-e_1(m)-(1+m^2)\G_{11}^2+m\G_{33}^1+m^2\G_{33}^2\}x_1x_3\\
+&\{-e_2(m)+(1+m^2)\G_{22}^1-\G_{33}^1-m\G_{33}^2\}x_2x_3=0
\end{split}
\end{equation}
along $\g(t)$.
\end{rem}

By Remark \ref{genericity}, generic tangent vectors form an open subset in the generic set $\mathcal{O}$. Hence, for $p\in \mathcal{O}$ and $X=(x_1, x_2, x_3)\in U_pM$ a generic vector, there exists $\d>0$ such that the velocity vectors of the geodesic $\g(t)=\exp_p tX$ are generic for all $t\in[0, \d]$. By Lemmas \ref{parallel} and \ref{computation}, it follows that either (\ref{computation_o}) or (\ref{computation_minus}) hold along $\g(t)$ on $[0, \d]$. In particular, for every $p\in \mathcal{O}$
and generic vector $X\in U_pM$, either (\ref{computation_o})
or (\ref{computation_minus}) holds for $X$ at $p$. This motivates the following definition.

\begin{dfn} \label{bplus}
Define subsets $\mathit{B}_p^+$ and $\mathit{B}_p^-$ of $U_p M$ as follows.
\begin{enumerate}
\item $X\in \mathit{B}_p^+$ if and only if (\ref{computation_o}) holds for $X$ at $p$.
\item $X\in \mathit{B}_p^-$ if and only if (\ref{computation_minus}) holds for $X$ at $p$.
\end{enumerate}
\end{dfn}

\begin{lem} \label {limit}
For each $p\in \mathcal{O}$, $U_pM \subset \mathit{B}_p^+\cup\mathit{B}_p^-$.
\end{lem}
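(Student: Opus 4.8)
The plan is to extend the equations from generic tangent vectors to all of $U_pM$ by a continuity/density argument. By Remark \ref{genericity}, the set of generic vectors in $U_pM$ is precisely $\{X=(x_1,x_2,x_3) : x_3\neq 0\}$, which is open and dense in $U_pM$. The discussion immediately preceding Definition \ref{bplus} establishes that every generic vector lies in $B_p^+\cup B_p^-$. So it remains to treat the non-generic vectors, namely those with $x_3=0$: the unicurved vectors (with $x_2\neq\pm mx_1$) and the isocurved vectors (with $x_2=\pm mx_1$).

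The key observation is that the expressions on the left-hand sides of (\ref{computation_o}) and (\ref{computation_minus}) are quadratic polynomials in $(x_1,x_2,x_3)$ whose coefficients are fixed real numbers depending only on $p$ (the Christoffel symbols and derivatives $e_i(m)$ at $p$). Thus $B_p^+$ and $B_p^-$ are each the intersection of $U_pM$ with the zero set of a fixed quadratic form, hence closed subsets of $U_pM$. Therefore $B_p^+\cup B_p^-$ is closed. Since it contains the dense set of generic vectors, it must equal all of $U_pM$. This gives the inclusion $U_pM\subset B_p^+\cup B_p^-$.

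I would write the argument in these steps: first, record that $B_p^+$ and $B_p^-$ are closed in $U_pM$, being defined by the vanishing of fixed quadratic polynomials in the components of $X$ (with coefficients the values at $p$ of Christoffel symbols and of $e_i(m)$). Second, invoke Remark \ref{genericity} to note that the generic vectors are dense in $U_pM$. Third, recall from the paragraph before Definition \ref{bplus} — which combines Lemmas \ref{parallel} and \ref{computation} and Remark \ref{minus_equivalence} — that every generic vector lies in $B_p^+\cup B_p^-$. Finally, conclude by closedness and density that $U_pM = \overline{\{\text{generic vectors}\}}\subset \overline{B_p^+\cup B_p^-}=B_p^+\cup B_p^-$.

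I do not expect a serious obstacle here; the only point requiring a little care is making explicit that the coefficients in (\ref{computation_o}) and (\ref{computation_minus}) are \emph{pointwise} data at $p$ — that is, once $p$ is fixed these are genuine quadratic forms in $X$, so their zero sets are closed — and that the set of generic unit vectors is dense in $U_pM$, which is clear since its complement $\{x_3=0\}$ is a great circle. A remark worth including is that this lemma makes $B_p^+$ and $B_p^-$ the natural objects of study: each is cut out by a quadratic equation in $X$, and the overdetermined system alluded to in the introduction will come from exploiting, at each $p\in\mathcal{O}$, that these two quadrics together cover the whole unit sphere.
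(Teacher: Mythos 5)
Your argument is correct and is essentially the paper's own proof: both rely on the closedness of $\mathit{B}_p^+$ and $\mathit{B}_p^-$ (as zero sets on $U_pM$ of quadratic forms with coefficients fixed at $p$), the density of generic vectors from Remark \ref{genericity}, and the fact, established just before Definition \ref{bplus} via Lemmas \ref{parallel} and \ref{computation} and Remark \ref{minus_equivalence}, that every generic vector lies in $\mathit{B}_p^+\cup\mathit{B}_p^-$. Your added justification that the coefficients are pointwise data at $p$, so the zero sets are genuinely closed, is a welcome clarification of a step the paper merely asserts.
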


\begin{proof}
Note that the set $\mathit{B}_p^+$ and $\mathit{B}_p^-$ are closed subsets of $U_pM$. By Remark \ref{genericity}, the generic vectors 
are dense in $U_pM$. Hence the remarks preceding Definition \ref{bplus} and the pigeon-hole principle imply the lemma.   

\end{proof}


\begin {prop}\label{point}
For each $p\in \mathcal{O}$, at least one of the following holds:
\begin{equation} \label{point_p}
\begin{split}
\G_{11}^3&=m\G_{12}^3=m\G_{21}^3=m^2\G_{22}^3\\
e_1(m)&=(1+m^2)\G_{11}^2+m\G_{33}^1-m^2\G_{33}^2\\
e_2(m)&=-(1+m^2)\G_{22}^1+\G_{33}^1-m\G_{33}^2\\
e_3(m)&=(1+m^2)\G_{31}^2,
\end{split}
\end{equation}
or
\begin{equation} \label{point_m}
\begin{split}
\G_{11}^3&=-m\G_{12}^3=-m\G_{21}^3=m^2\G_{22}^3\\
e_1(m)&=-(1+m^2)\G_{11}^2+m\G_{33}^1+m^2\G_{33}^2\\
e_2(m)&=(1+m^2)\G_{22}^1-\G_{33}^1-m\G_{33}^2\\
e_3(m)&=-(1+m^2)\G_{31}^2.
\end{split}
\end{equation}

\end{prop}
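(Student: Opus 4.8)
The plan is to extract, from the single scalar relations (\ref{computation_o}) and (\ref{computation_minus}), the full system of four (vector-valued) equations in (\ref{point_p}) and (\ref{point_m}). The mechanism is a polynomial-identity argument on the unit sphere $U_pM$. Fix $p\in\mathcal O$. By Lemma \ref{limit}, $U_pM=\mathit B_p^+\cup\mathit B_p^-$, so the homogeneous quadratic polynomial
$$
Q^+(x_1,x_2,x_3)=\{m\G_{11}^3-m^2\G_{12}^3\}x_1^2+\{\G_{21}^3-m\G_{22}^3\}x_2^2+\{e_3(m)-\G_{31}^2\}x_3^2+\cdots
$$
(the left side of (\ref{computation_o})) and the analogous $Q^-$ (the left side of (\ref{computation_minus})) satisfy $\{Q^+=0\}\cup\{Q^-=0\}\supset U_pM$. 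Since the zero set of the single polynomial $Q^+Q^-$ (a nonzero homogeneous polynomial, unless one of the factors vanishes identically) cannot contain the full sphere, we conclude that $Q^+\equiv 0$ on $T_pM$ or $Q^-\equiv 0$ on $T_pM$. (More carefully: a real quadratic form vanishing on all of $S^2$ is the zero form; if neither $Q^+$ nor $Q^-$ is the zero form, their product is a nonzero degree-4 homogeneous polynomial, whose zero set on $S^2$ is a proper subset of $S^2$, contradicting the covering.)

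Next I would translate the vanishing of $Q^+$ as a quadratic form into the vanishing of all six of its coefficients, namely: the coefficient of $x_1^2$, of $x_2^2$, of $x_3^2$, of $x_1x_2$, of $x_1x_3$, and of $x_2x_3$. Setting the $x_1^2$, $x_2^2$, and $x_1x_2$ coefficients to zero gives
$$
m\G_{11}^3=m^2\G_{12}^3,\qquad \G_{21}^3=m\G_{22}^3,\qquad \G_{11}^3-m\G_{12}^3+m\G_{21}^3-m^2\G_{22}^3=0,
$$
and since $m\neq 0$ these combine (using also (\ref{Bianchi_diag}), $\G_{11}^3=m^2\G_{22}^3$, which is automatic at points of $\mathcal O$) to give $\G_{11}^3=m\G_{12}^3=m\G_{21}^3=m^2\G_{22}^3$, the first line of (\ref{point_p}). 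Setting the $x_1x_3$, $x_2x_3$, and $x_3^2$ coefficients to zero gives directly the remaining three lines of (\ref{point_p}), after moving terms across the equality. The case $Q^-\equiv 0$ yields (\ref{point_m}) by the same bookkeeping with $m$ replaced by $-m$; this is exactly the substitution recorded in Remark \ref{minus_equivalence}.

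The main obstacle — really the only subtlety — is the step where one passes from "$Q^+Q^-$ vanishes on $S^2$" to "$Q^+\equiv0$ or $Q^-\equiv 0$." One must be sure that a homogeneous polynomial (here of degree four) vanishing identically on the unit sphere is the zero polynomial; this is standard (restrict to lines through the origin, or note $S^2$ is Zariski-dense in $\mathbb R^3$), but it is the load-bearing algebraic fact. Everything else is linear algebra: reading off the six coefficients of a quadratic form and repackaging them, together with the Bianchi relation (\ref{Bianchi_diag}), into the four displayed lines. I would present the argument in this order — (i) invoke Lemma \ref{limit}; (ii) the polynomial dichotomy $Q^+\equiv0$ or $Q^-\equiv0$; (iii) coefficient extraction for whichever vanishes; (iv) assemble (\ref{point_p}) or (\ref{point_m}) — noting at the end that the conclusion is a pointwise statement, with the choice of sign possibly depending on $p$.
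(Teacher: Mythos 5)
Your proposal is correct and in substance follows the paper's own argument: both start from Lemma \ref{limit}, deduce that one of the two quadratic forms $Q^{\pm}$ (the left-hand sides of (\ref{computation_o}) and (\ref{computation_minus})) vanishes identically on $T_pM$, and then extract the six coefficients, combining the $x_1^2$, $x_2^2$ (and $x_1x_2$) coefficients with (\ref{Bianchi_diag}) to assemble the first line of (\ref{point_p}) or (\ref{point_m}). The only divergence is how the dichotomy is obtained: the paper notes that $\mathit{B}_p^+$ and $\mathit{B}_p^-$ are closed and cover $U_pM$, so one of them has non-empty interior, and then applies Lemma \ref{argument_three} (a quadratic form vanishing on an open subset of the sphere vanishes on an open cone, hence identically); you instead multiply the two forms, observe that $Q^+Q^-$ vanishes on all of $U_pM$, and use homogeneity together with the fact that $\mathbb{R}[x_1,x_2,x_3]$ is an integral domain to conclude $Q^+\equiv 0$ or $Q^-\equiv 0$. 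These mechanisms are equivalent (a nonzero homogeneous polynomial can vanish neither on the whole sphere nor on an open subset of it), and your version is marginally more self-contained since it bypasses the union-of-two-closed-sets interior argument; the paper's formulation via interiors has the side benefit that Lemma \ref{argument_three} is reused later (e.g.\ in Lemma \ref{rank2_assist}), so the statement about interiors is wanted in its own right. One bookkeeping caveat: as printed, the $x_3^2$ coefficient in (\ref{computation_o}) reads $e_3(m)-\G_{31}^2$, which would literally give $e_3(m)=\G_{31}^2$ rather than the last line of (\ref{point_p}); retracing the expansion of (\ref{computation_6}) (the term $-(1+m^2)x_3\G_{31}^2$ multiplied by $x_3$) shows the coefficient is in fact $e_3(m)-(1+m^2)\G_{31}^2$, so your claim that the $x_3^2$, $x_1x_3$, $x_2x_3$ coefficients yield the last three lines of (\ref{point_p}) is right modulo this typo in the displayed formula.
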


As $\mathit{B}_p^+$ and $\mathit{B}_p^-$ are closed subsets of $U_pM$, Lemma \ref{limit} implies that 
at least one of them contains an interior. Hence to prove the Proposition \ref{point}, it suffices to prove the following lemma.

\begin{lem} \label{argument_three}
Let $p\in \mathcal{O}$. The equation (\ref{point_p}) (respectively (\ref{point_m})) holds  at $p$  if $\mathit{B}_p^+$  (respectively  $\mathit{B}_p^-$) contains an interior. 
\end{lem}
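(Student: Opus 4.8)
The plan is to treat the left-hand side of (\ref{computation_o}) as a homogeneous quadratic form $Q^+$ in the coordinates $(x_1,x_2,x_3)$ on $U_pM$, and similarly $Q^-$ for (\ref{computation_minus}), and to use that a quadratic form vanishing on an open subset of the sphere must vanish identically. Concretely, suppose $\mathit{B}_p^+$ contains an interior point $X_0$; then $Q^+$ vanishes on a relatively open neighborhood of $X_0$ in $U_pM$. Since $Q^+$ is a polynomial function on $\mathbb{R}^3$ that vanishes on an open subset of $S^2$, and $S^2$ is Zariski-dense in $\mathbb{R}^3$ (equivalently: a nonzero quadratic form cannot vanish on a 2-dimensional open set of the sphere — one can see this by restricting to three coordinate great circles or by a direct dimension count), we conclude that every coefficient of $Q^+$ is zero. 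The six coefficients of $Q^+$ — namely the coefficients of $x_1^2$, $x_2^2$, $x_3^2$, $x_1x_2$, $x_1x_3$, $x_2x_3$ — are exactly the six bracketed expressions appearing in (\ref{computation_o}).

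The remaining step is bookkeeping: set each of these six brackets to zero and rearrange into the form (\ref{point_p}). The $x_1^2$, $x_2^2$, and $x_1x_2$ coefficients give $m\G_{11}^3 = m^2\G_{12}^3$, $\G_{21}^3 = m\G_{22}^3$, and $\G_{11}^3 - m\G_{12}^3 + m\G_{21}^3 - m^2\G_{22}^3 = 0$; using $m\neq 0$ these combine with (\ref{Bianchi_diag}) (which gives $\G_{11}^3 = m^2\G_{22}^3$) to yield the chain $\G_{11}^3 = m\G_{12}^3 = m\G_{21}^3 = m^2\G_{22}^3$. The $x_3^2$ coefficient gives $e_3(m) = \G_{31}^2$, but one must reconcile this with the claimed $e_3(m) = (1+m^2)\G_{31}^2$; here I would use the first-structure-equation / skew-symmetry relations among the $\G$'s, or more likely the identity $\G_{31}^2$ should be read together with the $x_1x_3$ and $x_2x_3$ brackets — so the last three lines of (\ref{point_p}) come directly from setting the $x_1x_3$, $x_2x_3$, and $x_3^2$ brackets to zero and solving for $e_1(m)$, $e_2(m)$, $e_3(m)$ respectively, possibly after substituting the relation $\G_{31}^2 = (1+m^2)^{-1}e_3(m)$ is not needed if one simply renames: the stated system (\ref{point_p}) is literally the rewriting of "all six brackets vanish." The case of $\mathit{B}_p^-$ is identical after the substitution $m \mapsto -m$, $K \mapsto L$, which by Remark \ref{minus_equivalence} carries (\ref{computation_o}) to (\ref{computation_minus}) and hence (\ref{point_p}) to (\ref{point_m}).

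The main (and essentially only) obstacle is the elementary fact that a quadratic polynomial vanishing on a nonempty open subset of $S^2$ is the zero polynomial; I would justify this in one line by noting that such an open set contains three mutually orthogonal pairs of points, or better, that a real-analytic function vanishing on an open subset of the connected real-analytic manifold $S^2$ vanishes on all of $S^2$, and then that a quadratic form vanishing on all of $S^2$ is zero (apply it to $e_i$, to $(e_i+e_j)/\sqrt2$). Everything after that is symbol-pushing with the Christoffel relations already recorded in the paper, so no genuine difficulty remains.
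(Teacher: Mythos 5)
Your proposal is correct and follows essentially the same route as the paper: the homogeneous quadratic vanishes on an open cone in $T_pM$, so all six coefficients vanish; the $x_1^2$, $x_2^2$ coefficients together with (\ref{Bianchi_diag}) and $m\neq0$ give the Christoffel chain, the $x_1x_3$, $x_2x_3$, $x_3^2$ coefficients give the derivative equations, and the $\mathit{B}_p^-$ case is the substitution $m\mapsto-m$. The only discrepancy you flagged, in the $x_3^2$ term, is a typo in (\ref{computation_o}): tracing the derivation back to (\ref{computation_6}) shows the coefficient should read $e_3(m)-(1+m^2)\G_{31}^2$, which matches (\ref{point_p}) directly, so no appeal to structure equations is needed.
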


\begin{proof}
We prove the case when $\mathit{B}_p^+$ contains an interior. The proof for the case when $\mathit{B}_p^-$ contains an interior is analogous. 
If $\mathit{B}_p^+$ contains an open set in $U_pM$, then the degree two homogeneous polynomial  in variables $x_1, x_2, x_3$ defined by the left hand side of (\ref{computation_o}) vanishes identically on an open subset of $T_pM$. Hence its coefficients all vanish.
Consider the coefficients for $x_1x_3$, $x_2x_3$ and $x_3^2$, we obtain the derivative equations in (\ref{point_p}).
As $m\neq0$, by considering the coefficients for $x_1^2$ and $x_2^2$, we have that 
\begin{equation}\label{Christoffel_1}
\G_{11}^3=m\G_{12}^3
\end{equation}
and
\begin{equation}\label{Christoffel_2}
\G_{21}^3=m\G_{22}^3.
\end{equation}
Now, (\ref{Bianchi_diag}), (\ref{Christoffel_1}), and (\ref{Christoffel_2}) imply that  
\begin{equation}\label{Christoffel_relations}
\G_{11}^3=m\G_{12}^3=m\G_{21}^3=m^2\G_{22}^3,
\end{equation}
as desired.

\end{proof}

\section {Local Structure on the interior of $\mathcal{E}$} \label {lse}

This section derives relations between Christoffel symbols at interior points to the extremal set $\mathcal{E}$. 
Let $p\in \mathcal{E}$ and let $\{e_1, e_2, e_3\}$ be a Ricci diagonalizing orthonormal frame at $p$ that satisfies (\ref{order}).

Recall that from Definition \ref{eplus_def}, we had a disjoint partition $\mathcal{E}=\mathcal{E}_+\cup \mathcal{E}_-$. We first study the case when $p\in \mathcal{E}_-$. The following lemma will show that a vector $X=(x_1, x_2, x_3)$ is unicurved if $x_1\neq \pm 1$.

\begin {lem} \label{pointwise_e}
Let $p\in \mathcal{E}_-$ and let $X=(x_1, x_2,x_3)\in U_pM$ with $x_1\neq \pm1$.
If $V\in U_pM$ is perpendicular to $X$, then $\mathrm{sec}(X,V)=\e$ if and only if 
\begin {equation} \label {v_e}
V=\pm \Big( \frac{1-x_1^2}{\sqrt{1-x_1^2}},
       \frac{-x_1x_2}{\sqrt{1-x_1^2}},
       \frac{-x_1x_3}{\sqrt{1-x_1^2}}\Big).
\end{equation} 
\end{lem}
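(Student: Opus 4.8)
The plan is to reduce everything to Lemma~\ref{lem1}, exactly as was done for the generic case in Lemma~\ref{plane_o}. First I would note that since $p\in\mathcal{E}_-$, part (1) of Lemma~\ref{lem1} says that a $2$-plane $\s$ has $\mathrm{sec}(\s)=\e$ if and only if its unit normal $u=c_1e_1+c_2e_2+c_3e_3$ satisfies $c_1=0$. Given a unit vector $V=(v_1,v_2,v_3)\perp X$, the plane $\spn\{X,V\}$ has unit normal $u=X\times V$, whose first component is $x_2v_3-x_3v_2$. Hence $\mathrm{sec}(X,V)=\e$ if and only if $V$ solves the linear system
\begin{equation*}
\begin{split}
&x_1v_1+x_2v_2+x_3v_3=0,\\
&x_2v_3-x_3v_2=0.
\end{split}
\end{equation*}

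Next I would check that when $x_1\neq\pm1$ this system has a one-dimensional solution space. Using $x_1^2+x_2^2+x_3^2=1$, the condition $x_1\neq\pm1$ is exactly $x_2^2+x_3^2\neq0$, so at least one of $x_2,x_3$ is nonzero; a short determinant or direct elimination argument then shows the coefficient matrix has rank $2$, so the solution set is a line. Then I would verify by direct substitution that the vector displayed in (\ref{v_e}) lies in this line: it is clearly orthogonal to $X$ (the dot product is $\frac{1}{\sqrt{1-x_1^2}}((1-x_1^2)x_1 - x_1x_2^2 - x_1x_3^2)=\frac{x_1}{\sqrt{1-x_1^2}}(1-x_1^2-x_2^2-x_3^2)=0$), it satisfies the second equation since $x_2\cdot\frac{-x_1x_3}{\sqrt{1-x_1^2}}-x_3\cdot\frac{-x_1x_2}{\sqrt{1-x_1^2}}=0$, and one computes its norm squared to be $\frac{(1-x_1^2)^2+x_1^2x_2^2+x_1^2x_3^2}{1-x_1^2}=\frac{(1-x_1^2)^2+x_1^2(1-x_1^2)}{1-x_1^2}=1$. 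Since the solution line is one-dimensional and this is a unit vector in it, every unit solution is $\pm$ this vector, which is exactly the claim.

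There is no serious obstacle here; the only point requiring a little care is the rank computation for the linear system, i.e. confirming that $x_1\neq\pm1$ genuinely forces a one-dimensional (rather than two-dimensional) solution space — this is the analogue of the role played by $x_3\neq0$ and $x_2\neq\pm mx_1$ in Lemma~\ref{plane_o}. I would handle it by exhibiting a nonzero $2\times2$ minor of the coefficient matrix: for instance, if $x_3\neq0$ the minor formed by the $v_1,v_2$ columns of the first row together with the $v_2,v_3$ entries of the second row works out to $-x_3^2\neq0$ after accounting for signs, and symmetrically if $x_2\neq 0$; since $x_1\neq\pm1$ guarantees one of these, the rank is $2$. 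Everything else is routine verification, so I would present the proof compactly: set up the two linear equations from Lemma~\ref{lem1}, remark that $x_1\neq\pm1$ gives rank $2$ hence a one-dimensional solution space, and check that (\ref{v_e}) is a unit vector spanning it.
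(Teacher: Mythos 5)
Your proof is correct and follows essentially the same route as the paper: both reduce to part (1) of Lemma~\ref{lem1} via the cross-product normal, obtain the two linear constraints $V\perp X$ and $x_2v_3-x_3v_2=0$, and identify the resulting one-dimensional solution line (the paper writes its unit spanning vector as $\pm\,(x_1,x_2,x_3)\times(0,-x_3,x_2)$ normalized, while you verify the displayed vector directly). One small fix: your quoted $2\times2$ minor is off --- the clean choice is the $v_2,v_3$ minor, which equals $x_2^2+x_3^2=1-x_1^2\neq0$ (or simply note that $(x_1,x_2,x_3)$ and $(0,-x_3,x_2)$ are nonzero and orthogonal, hence independent), but this does not affect the argument.
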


\begin {proof}
Write $V=(v_1,v_2,v_3)$. We argue like Lemma \ref{plane_o}.
The unit vector
$$u=X \times V=(x_2v_3-x_3v_2, x_3v_1-x_1v_3, x_1v_2-x_2v_1)$$ 
is normal to the plane spanned by $X$ and $V$.

By Lemma \ref{lem1}, $\mathrm{sec}(X, V)=\e$ if and only if
$$x_2v_3-x_3v_2=0.$$
Since $V\perp X$, $\mathrm{sec}(X, V)=\e$ if and only if the following system of equations hold:
\begin{equation*} 
(v_1, v_2, v_3)\cdot (0,-x_3,x_2)=0
\end{equation*}
and
\begin{equation*} 
(v_1, v_2, v_3)\cdot (x_1, x_2, x_3)=0,
\end{equation*}
or equivalently, if and only if
\begin {equation*}
\begin {split}
V&=\pm \dfrac{(x_1, x_2, x_3)\times(0, -x_3, x_2)}{|(x_1, x_2, x_3)\times(0, -x_3, x_2)|}\\ 
  &=\pm (\frac{1-x_1^2}{\sqrt{1-x_1^2}}, \frac{-x_1x_2}{\sqrt{1-x_1^2}}, \frac{-x_1x_3}{\sqrt{1-x_1^2}}).
\end{split}
\end{equation*}

\end{proof}

\begin{rem} \label{vector_character}
Lemma \ref{pointwise_e} and Corollary \ref{isovector} together show that each vector tangent to $\mathcal{E}_-$ is either isocurved or univurved. 
A vector $X=(x_1, x_2, x_3)$ is unicurved if and only if $x_1\neq \pm 1$, and is isocurved if and only if $x_1=\pm1$. 
\end{rem}

Notice that the $V$ in (\ref{v_e}) can be obtained by taking $m=0$ in (\ref{v_o}). Likewise, setting $m=0$ in the statement and proof of Lemma \ref{computation}
yields the following:

\begin{lem} \label{parallel_e}
Let $\g:[0, \d]\rightarrow \mathcal{E}_-$ be a geodesic whose velocity vectors have components $\g'(t)=(x_1(t), x_2(t), x_3(t))$.
If $\g'(t)$ is unicurved for all $t \in [0, \d]$, then
\begin {equation*}
V(t)=(\frac{1-x_1^2}{\sqrt{1-x_1^2}},\frac{-x_1x_2}{\sqrt{1-x_1^2}},\frac{-x_1x_3}{\sqrt{1-x_1^2}})
\end{equation*}
is parallel along $\g$ if and only if 
\begin{equation}\label{parallel_e_chris}
x_3\{-x_1\G_{11}^2-x_2\G_{21}^2-x_3\G_{31}^2+x_2\G_{31}^3\}+x_2\{x_1\G_{11}^3+x_2\G_{21}^3\}=0.
\end{equation}
\end{lem}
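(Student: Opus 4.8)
The plan is to specialize the already-completed computation of Lemma~\ref{computation} to the case $m\equiv 0$, since the excerpt explicitly notes that the vector $V$ in \eqref{v_e} is obtained by setting $m=0$ in \eqref{v_o}, and that $\mathcal{E}_-$ is the locus where $\lambda=\e$, i.e.\ ``$m=0$'' in the sense of Lemma~\ref{lem1}(1). Concretely, I would let $\g:[0,\d]\to\mathcal{E}_-$ be a geodesic with unicurved velocity throughout, write $V(t)=(v_1,v_2,v_3)$ with
\[
v_1=\frac{1-x_1^2}{\sqrt{1-x_1^2}},\qquad v_2=\frac{-x_1x_2}{\sqrt{1-x_1^2}},\qquad v_3=\frac{-x_1x_3}{\sqrt{1-x_1^2}},
\]
set $N=\sqrt{1-x_1^2}$ (the analogue of $K$ when $m=0$), and write out the parallel field equations $0=\tfrac{DV}{dt}=\sum_k\big(\tfrac{dv_k}{dt}+\sum_{i,j}\G_{ij}^k x_i v_j\big)e_k$.

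The key steps, in order: (1) substitute the explicit $v_j$ into the undifferentiated terms, obtaining the $m=0$ analogue of \eqref{parallel_2}, namely each component equation has the form $v_k'+\tfrac{1}{N}\{-x_1(\sum_{i,j}\G_{ij}^k x_ix_j)+(\text{a linear term in the }\G^k_{i1})\}=0$; (2) use the geodesic equations \eqref{geod} to replace $\sum_{i,j}\G_{ij}^k x_ix_j$ by $-x_k'$, giving the $m=0$ analogues of \eqref{computation_1}--\eqref{computation_3}; (3) observe, exactly as in the proof of Lemma~\ref{computation}, that the first two of these three equations are equivalent (when $m=0$ the identity \eqref{K_v} becomes $v_1=N$, and differentiating $N'=-x_1x_1'/N$ makes the first equation a multiple of the second), and that the second and third are equivalent up to the nonzero factor $x_3/x_1$ coming from the generic/unicurved hypothesis; (4) conclude that the whole system reduces to a single scalar equation, and after clearing $N\neq0$ and simplifying one lands on \eqref{parallel_e_chris}. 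Since $x_1\neq\pm1$ on a unicurved vector (Remark~\ref{vector_character}), $N\neq0$; and unicurvedness/the need for $x_3$ and $x_1$ to be handled must be tracked so the division steps are legitimate.

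Rather than redo all of this by hand, I would phrase the proof as: ``Setting $m=0$ throughout the statement and proof of Lemma~\ref{computation}, and noting $K$ becomes $N=\sqrt{1-x_1^2}$, equation \eqref{computation_6} reduces to \eqref{parallel_e_chris}.'' Indeed, putting $m=0$ in \eqref{computation_6} kills the terms involving $\G_{12}^3,\G_{22}^3$ and the $(1+m^2)$ becomes $1$, and the surviving identity is precisely $x_3\{-x_1\G_{11}^2-x_2\G_{21}^2-x_3\G_{31}^2+x_2\G_{31}^3\}+x_2\{x_1\G_{11}^3+x_2\G_{21}^3\}=0$. One should double-check that every division performed in the proof of Lemma~\ref{computation} (by $m$, by $x_3/(x_2+mx_1)$, by $K$) either disappears or remains valid when $m=0$: the division by $m$ is what collapsed \eqref{computation_1} into \eqref{computation_2}, but with $m=0$ those two equations are literally proportional with factor $v_2/v_1 = -x_1x_2/(1-x_1^2)$ handled directly, while $x_2+mx_1$ becomes $x_2$ and $K$ becomes $N$, both handled by the unicurved hypothesis ($x_1\neq\pm1$) together with genericity of the relevant components.

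The main obstacle I anticipate is purely bookkeeping: the proof of Lemma~\ref{computation} uses the hypothesis $m\neq0$ in exactly one place (to divide out and show \eqref{computation_1}$\Leftrightarrow$\eqref{computation_2}), so the $m=0$ argument must instead verify that equivalence by a direct computation using $v_1=N$, $v_1'=N'=-x_1x_1'/N$, $v_2=-x_1x_2/N$, and the geodesic equation for $x_1$; this is the one step that does not formally follow by ``set $m=0$'' and has to be written out. Everything else is a transcription, so the real content is confirming that no hidden use of $m\neq0$ survives and that the unicurved hypothesis (which is the $m=0$ shadow of ``generic'') supplies the nonvanishing denominators.
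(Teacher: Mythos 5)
Your proposal is correct and is essentially the paper's own argument: the paper proves Lemma \ref{parallel_e} precisely by the remark that (\ref{v_e}) is (\ref{v_o}) with $m=0$ and then setting $m=0$ throughout the statement and proof of Lemma \ref{computation}, so that (\ref{computation_6}) reduces to (\ref{parallel_e_chris}). You in fact supply the one detail the paper leaves implicit, namely that the sole use of $m\neq 0$ (the equivalence of (\ref{computation_1}) and (\ref{computation_2})) must be replaced at $m=0$ by the direct observation that the first component equation is automatically satisfied (since $v_1=N=\sqrt{1-x_1^2}$ and $N'=-x_1x_1'/N$), so the system collapses to the remaining two equations exactly as in the paper; your parenthetical about the first equation being ``proportional with factor $v_2/v_1$'' is slightly off (it is identically zero, hence vacuous), but this does not affect the argument.
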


\begin {prop} \label {point_e}
The following is true for each interior point of $\mathcal{E}_-$:
$$\G_{11}^2=\G_{11}^3=\G_{21}^3=\G_{31}^2=0$$
and
$$\G_{33}^1=\G_{22}^1.$$
\end{prop}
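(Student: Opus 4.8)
The plan is to proceed exactly as in the proof of Proposition~\ref{point}: extract the required identities by exploiting the fact that one of the two closed sets $B_p^+$, $B_p^-$ has nonempty interior, so that the corresponding homogeneous quadratic polynomial in $(x_1,x_2,x_3)$ vanishes identically and hence all of its coefficients vanish. Here, however, we are at an interior point $p$ of $\mathcal{E}_-$, so by Remark~\ref{vector_character} every unit tangent vector with $x_1\neq\pm1$ is unicurved, and for a geodesic starting at $p$ with such an initial velocity, the velocity remains unicurved for short time (the condition $x_1\neq\pm1$ is open). Since $M$ has higher rank, there is a parallel curvature-$\e$ field along such a geodesic, which by Lemma~\ref{pointwise_e} must be $\pm V(t)$ with $V$ as in the statement of Lemma~\ref{parallel_e}; hence $V(t)$ is parallel, and Lemma~\ref{parallel_e} gives that \eqref{parallel_e_chris} holds for every unit $X=(x_1,x_2,x_3)\in U_pM$ with $x_1\neq\pm1$. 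By density and continuity it holds for all of $U_pM$.

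The next step is to read \eqref{parallel_e_chris} as a degree-two homogeneous polynomial identity in $x_1,x_2,x_3$ and set each coefficient to zero. Expanding, the polynomial is
\begin{equation*}
-\G_{11}^2\,x_1x_3-\G_{21}^2\,x_2x_3-\G_{31}^2\,x_3^2+\G_{31}^3\,x_2x_3+\G_{11}^3\,x_1x_2+\G_{21}^3\,x_2^2.
\end{equation*}
The coefficients of $x_1x_3$, $x_3^2$, $x_1x_2$, and $x_2^2$ immediately give
$$\G_{11}^2=0,\qquad \G_{31}^2=0,\qquad \G_{11}^3=0,\qquad \G_{21}^3=0,$$
and the coefficient of $x_2x_3$ gives $\G_{31}^3=\G_{21}^2$; but $\G_{31}^3=\langle\n_{e_3}e_1,e_3\rangle=-\langle\n_{e_3}e_3,e_1\rangle=-\G_{33}^1$ and $\G_{21}^2=\langle\n_{e_2}e_1,e_2\rangle=-\langle\n_{e_2}e_2,e_1\rangle=-\G_{22}^1$ by skew-symmetry of the connection in the last two slots, so this coefficient yields $\G_{33}^1=\G_{22}^1$. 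This establishes all five asserted identities.

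There is no real obstacle beyond bookkeeping: the genuinely substantive work—deriving \eqref{parallel_e_chris} from the rank hypothesis and the parallel-field equations—has already been done in Lemma~\ref{parallel_e}, which is obtained from Lemma~\ref{computation} by setting $m=0$. The only point requiring a line of care is the passage from "\eqref{parallel_e_chris} holds for all unicurved $X$" (i.e.\ $x_1\neq\pm1$) to "\eqref{parallel_e_chris} holds identically on $U_pM$"; this follows because unicurved vectors are dense in $U_pM$ by Remark~\ref{vector_character} and the left-hand side of \eqref{parallel_e_chris} is a polynomial, hence continuous. One should also note that $\G_{11}^2=0$ is listed among the conclusions even though it is not needed to recover the others; it simply comes for free from the $x_1x_3$ coefficient.
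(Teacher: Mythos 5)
Your proposal is correct and follows essentially the same route as the paper: use the higher rank hypothesis along geodesics with unicurved initial velocity issuing from the interior point, identify the parallel curvature-$\e$ field with $\pm V(t)$ via Lemmas \ref{pointwise_e} and \ref{parallel_e}, and then kill the coefficients of the resulting homogeneous quadratic; the paper's equation (\ref{ex}) is exactly your expanded form of (\ref{parallel_e_chris}) after the metric antisymmetry $\G_{21}^2=-\G_{22}^1$, $\G_{31}^3=-\G_{33}^1$ that you apply at the end. The only cosmetic difference is that you pass through density of unicurved vectors in $U_pM$, whereas the paper concludes directly from vanishing of the quadratic on an open cone in $T_pM$; both are valid.
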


\begin {proof}
Let $p$ be an interior point of $\mathcal{E}_-$.
Then each geodesic starting from $p$ remains in $\mathcal{E}_-$ for a short time. Moreover, by Remark \ref{vector_character}, the unicurved vectors form an 
open subset of unit vectors tangent to $M$ near $p$. Hence, a geodesic starting from $p$ whose initial velocity is unicurved,
has velocity vectors that remain unicurved for a short time.

Let $X=(x_1, x_2, x_3)$ be a unicurved vector. We claim that 
\begin{equation} \label{ex}
x_3\{-x_1\G_{11}^2+x_2(\G_{22}^1-\G_{33}^1)-x_3\G_{31}^2\}
+x_2\{x_1\G_{11}^3+x_2\G_{21}^3\}=0
\end{equation}
at the point $p$.

To prove the claim, let $\g(t)=\exp_p(tX)$.
As $M$ has higher rank, there exists a unit normal parallel field $W(t)$ along $\g(t)$ with $\mathrm{sec}(\g'(t), W(t))\equiv \e$.
By Lemmas  \ref{pointwise_e} and  \ref{parallel_e}, $W(t)=\pm V(t)$ for a short time and the claim follows.

As the unicurved vectors form an open subset of $U_pM$, the degree 2 homogeneous polynomial in variables $x_1, x_2, x_3$ defined by the left hand side of (\ref{ex}) vanishes identically on an open subset of $T_pM$. Hence its coefficients all equal zero, concluding the proof.

\end{proof}

Note that only the assumption that $\L\neq\e$ was used to prove Proposition \ref{point_e}. Hence, for $p\in \mathcal{E}_+$, the same proposition holds
after permuting the indices 1 and 2. We record this in the following proposition.

\begin{prop}  \label {point_e_2}
The following is true for each interior point of $\mathcal{E}_+$:
$$\G_{22}^1=\G_{22}^3=\G_{12}^3=\G_{32}^1=0$$
and
$$\G_{33}^2=\G_{11}^2.$$

\end{prop}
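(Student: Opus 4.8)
The plan is to reduce Proposition \ref{point_e_2} to Proposition \ref{point_e} by a change of frame. Let $p$ be an interior point of $\mathcal{E}_+$, so $\l<\e=\L$ at $p$, and let $\{e_1,e_2,e_3\}$ be a Ricci diagonalizing orthonormal frame satisfying (\ref{order}). Observe that the defining conditions for $\mathcal{E}_-$ and $\mathcal{E}_+$, together with the curvature identities (\ref{Ricci_diag}) and Lemma \ref{sectional}, are symmetric under interchanging the roles of the indices $1$ and $2$: replacing the ordered frame $\{e_1,e_2,e_3\}$ by $\{e_2,e_1,-e_3\}$ (the sign on $e_3$ keeping the frame positively oriented) swaps $\l_{13}\leftrightarrow\l_{23}$, hence turns a point of $\mathcal{E}_+$ into a point of $\mathcal{E}_-$ in the new frame, since the new $\l_{13}$ equals the old $\l_{23}=\e=\L$ is wrong — rather the new $\L$-type eigenvalue is $\e$ and the new $\l$-type is strictly below $\e$, so indeed the new frame witnesses $p\in\mathcal{E}_-$.

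Next I would re-examine the proof of Proposition \ref{point_e} to confirm the remark following it: the only place the hypothesis $p\in\mathcal{E}_-$ (rather than merely $p\in\mathcal{E}$) entered was through $\L\neq\e$, which guarantees via Remark \ref{vector_character} that the unicurved vectors form an open subset of $U_pM$ near $p$ and that Lemmas \ref{pointwise_e} and \ref{parallel_e} apply with the index $1$ singled out. For $p\in\mathcal{E}_+$ the condition $\l\neq\e$ plays exactly the same role with the index $2$ singled out, so the entire argument — a geodesic with unicurved initial velocity keeps unicurved velocity for short time, carries a parallel curvature-$\e$ normal field which must be $\pm V(t)$, yielding a degree-two homogeneous polynomial in $(x_1,x_2,x_3)$ that vanishes on an open cone and hence has vanishing coefficients — goes through verbatim after transposing $1$ and $2$.

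Concretely, I would apply Proposition \ref{point_e} to the frame $\{\tilde e_1,\tilde e_2,\tilde e_3\}=\{e_2,e_1,-e_3\}$, obtaining $\tilde\G_{11}^2=\tilde\G_{11}^3=\tilde\G_{21}^3=\tilde\G_{31}^2=0$ and $\tilde\G_{33}^1=\tilde\G_{22}^1$, then translate back: $\tilde\G_{ij}^k=\langle\n_{\tilde e_i}\tilde e_j,\tilde e_k\rangle$ becomes the stated $\G_{22}^1=\G_{22}^3=\G_{12}^3=\G_{32}^1=0$ and $\G_{33}^2=\G_{11}^2$, where one checks that each sign $-e_3$ cancels in pairs (the terms $\tilde\G_{11}^3$ and $\tilde\G_{31}^2$ each carry exactly one factor of $\tilde e_3$, so the sign is irrelevant to whether they vanish). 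The only genuine point to verify — and the step I expect to require the most care — is precisely this bookkeeping: that the orientation-correcting sign on $e_3$ does not alter any of the four vanishing statements nor the equality $\G_{33}^2=\G_{11}^2$, and that the inequality hypothesis in Lemma \ref{pointwise_e} indeed only used $\L\neq\e$; everything else is a direct transcription of the already-established Proposition \ref{point_e}.
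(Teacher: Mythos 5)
Your proposal is correct and takes essentially the same route as the paper, whose entire proof is the remark that only $\L\neq\e$ (and not its sign) was used in Proposition \ref{point_e}, so the statement for $\mathcal{E}_+$ follows by permuting the indices $1$ and $2$ — exactly your frame-swap reduction, and your Christoffel-symbol bookkeeping (including the harmless sign on $e_3$) checks out. The only blemish is the claim that the new frame ``witnesses $p\in\mathcal{E}_-$'': since $\mathcal{E}_-$ and $\mathcal{E}_+$ are defined via the ordering (\ref{order}), the point remains in $\mathcal{E}_+$; but your second paragraph gives the correct justification (the proof of Proposition \ref{point_e} only needs the distinguished coordinate-plane curvature to differ from $\e$, with the other two equal to $\e$), so this slip does not affect the argument.
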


\section {Evolution Equations Along Isocurved Geodesics in $\mathcal{O}$ } 


Assume that the open set $\mathcal{O}$ is nonempty.  The goal of this section is to study the global structure of $\mathcal{O}$ by the Christoffel symbol relations given in Proposition \ref{point}.

 The global structure of $\mathcal{O}$ is described in terms of evolution equations of geometric quantities along a specific family of
isocurved geodesics. We begin this section with a construction of such a family of isocurved geodesics. 
\subsection {Existence of Isocurved Geodesics}
Let $\mathcal{C}$ be a connected component of $\mathcal{O}$. Without loss of generality,
we may assume that there exists a global framing $\{e_1, e_2, e_3\}$ over $\mathcal{C}$ satisfying  (\ref{order}) as in Remark \ref{uniqueness}. With respect to this framing, at each point $p\in \mathcal{C}$ either (\ref{point_p}) or (\ref{point_m}) holds by Proposition \ref{point}.

\begin{dfn} \label{F_def}
Define closed subsets $\mathcal{F}^+$ and $\mathcal{F}^-$ of
$\mathcal{C}$ by
$$\mathcal{F}^+=\big\{p\in \mathcal{C}\,|\, \text{ (\ref{point_p}) holds} \big\}$$ 
and
$$\mathcal{F}^-=\big\{p\in \mathcal{C}\,|\, \text{ (\ref{point_m}) holds} \big\}.$$
\end{dfn}
Note that $\mathcal{F}^+\cup \mathcal{F}^-=\mathcal{C}$.

\begin{rem} \label{frame_switching}
The definition of $\mathcal{F}^+$ and $\mathcal{F}^-$ depends on the Ricci diagonalizing orthonormal frame. 
The reader may verify that points in $\mathcal{F}^+$ and points in $\mathcal{F}^-$ are switched if the orthonormal frame is switched from
$\{e_1, e_2, e_3\}$ to $\{e_1, -e_2, e_3\}$. 
\end{rem}

\begin{lem} \label{alternative}
After possibly switching the frame $\{e_1, e_2, e_3\}$ to $\{e_1, -e_2, e_3\}$, we may assume that one of the following holds:
\begin{enumerate}
\item $\mathcal{F^+}=\mathcal{F}^-=\mathcal{C}$,
\\or
\item The set  $\mathcal{C} \setminus \mathcal{F}^-$ contains an open set $U$ in $\mathcal{C}$.
\end{enumerate}
\end{lem}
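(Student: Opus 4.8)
The plan is to argue by cases on the behaviour of the closed sets $\mathcal{F}^+$ and $\mathcal{F}^-$, using the fact that $\mathcal{F}^+\cup\mathcal{F}^-=\mathcal{C}$ together with the Baire category theorem (or rather the observation that a connected component is a Baire space, being an open subset of a manifold). First I would dispose of the trivial case: if $\mathcal{F}^-=\mathcal{C}$, then in particular $\mathcal{F}^+\cup\mathcal{F}^-=\mathcal{C}$ forces $\mathcal{F}^+$ to be whatever it is, but the more useful dichotomy is between $\mathcal{F}^+=\mathcal{C}$ and $\mathcal{F}^+\neq\mathcal{C}$. So I would split: either $\mathcal{F}^+=\mathcal{C}$ and $\mathcal{F}^-=\mathcal{C}$ — which is alternative (1) — or at least one of $\mathcal{C}\setminus\mathcal{F}^+$ and $\mathcal{C}\setminus\mathcal{F}^-$ is a nonempty open set (nonempty because the sets are proper, open because $\mathcal{F}^\pm$ are closed).

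The key step is then to observe that if $\mathcal{C}\setminus\mathcal{F}^+$ contains a nonempty open set $U$, then after performing the frame switch $\{e_1,e_2,e_3\}\mapsto\{e_1,-e_2,e_3\}$, Remark \ref{frame_switching} tells us that $\mathcal{F}^+$ and $\mathcal{F}^-$ swap roles. Hence the open set $U$, which was contained in $\mathcal{C}\setminus\mathcal{F}^+$ before the switch, becomes contained in $\mathcal{C}\setminus\mathcal{F}^-$ after the switch — giving alternative (2). If instead $\mathcal{C}\setminus\mathcal{F}^-$ already contains a nonempty open set, we are in alternative (2) with no switch needed. Thus in all cases, after possibly switching the frame, either (1) or (2) holds.

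The one point that needs a little care — and what I expect to be the main (though modest) obstacle — is verifying that $\mathcal{F}^+\neq\mathcal{C}$ (resp. $\mathcal{F}^-\neq\mathcal{C}$) really implies that its complement contains an \emph{open} set, not merely that the complement is nonempty. Since $\mathcal{F}^+$ is closed in $\mathcal{C}$ by Definition \ref{F_def}, its complement is automatically open, so a nonempty complement is itself the desired open set $U$. The genuinely substantive content is just the bookkeeping of the two dichotomies and the symmetry provided by Remark \ref{frame_switching}; no differential equations or curvature computations enter. I would write it as: suppose neither (1) nor (2) holds even after switching; then $\mathcal{C}\setminus\mathcal{F}^-$ has empty interior, hence (being open) is empty, so $\mathcal{F}^-=\mathcal{C}$; applying the switch and repeating, $\mathcal{F}^+=\mathcal{C}$ as well, so (1) holds after all — a contradiction — and this finishes the proof.
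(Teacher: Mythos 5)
Your proof is correct and is essentially the paper's own argument: since $\mathcal{F}^+$ and $\mathcal{F}^-$ are closed, the complements $\mathcal{C}\setminus\mathcal{F}^+$ and $\mathcal{C}\setminus\mathcal{F}^-$ are open, so either both are empty (giving (1)) or one is nonempty, and Remark \ref{frame_switching} lets you switch the frame if needed so that the nonempty open set lies in $\mathcal{C}\setminus\mathcal{F}^-$ (giving (2)). The invocation of the Baire category theorem is superfluous, as your own closing observation shows---openness of the complements already does all the work.
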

\begin{proof}
Note that the sets $\mathcal{C}\setminus \mathcal{F}^-$ and $\mathcal{C} \setminus \mathcal{F}^+$ are open in $\mathcal{C}$. If both of these sets are empty, then (1) holds. Otherwise,
by Remark \ref{frame_switching}, after possibly switching the frame $\{e_1, e_2, e_3\}$ to $\{e_1, -e_2, e_3\}$, (2) holds.

\end{proof}

\begin{conv}
From now on, we always assume that the Ricci Diagonalizing frame is selected such that either (1) or (2) in Lemma \ref{alternative} holds.
\end{conv}

\begin{dfn}
Define two vector fields on $\mathcal{C}$ by
$$E^+=\frac {1}{\sqrt{1+m^2}}e_1-\frac{m}{\sqrt{1+m^2}}e_2,$$
$$E^-=\frac {1}{\sqrt{1+m^2}}e_1+\frac{m}{\sqrt{1+m^2}}e_2.$$
\end{dfn}

\begin{rem} \label{isocurved_property}
Note that these vector fields are isocurved by Corollary \ref{isovector}. They are characterized projectively by
having the property that every plane containing one of these vectors has curvature $\e$.
\end{rem}

\begin {lem} \label {geod_o}
On $\mathcal{F}^+$, $\n_{E^+} E^+=0$.
\end {lem}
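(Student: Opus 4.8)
The plan is to compute $\n_{E^+}E^+$ directly in the frame $\{e_1,e_2,e_3\}$, express everything in terms of Christoffel symbols $\G_{ij}^k$ and derivatives of $m$, and then show that each component vanishes using the relations (\ref{point_p}) that hold throughout $\mathcal{F}^+$. Write $E^+ = \frac{1}{\sqrt{1+m^2}}(e_1 - m e_2)$, so $E^+ = (x_1,x_2,x_3)$ with $x_1 = (1+m^2)^{-1/2}$, $x_2 = -m(1+m^2)^{-1/2}$, $x_3 = 0$. This is exactly the isocurved vector appearing in Corollary \ref{isovector}(3) (the case $x_2 = -mx_1$, $x_3=0$), so along an integral curve $\g$ of $E^+$ the velocity $\g'(t)$ is isocurved at every point of $\mathcal{F}^+$; in particular, $\g$ stays tangent to the isocurved distribution and one expects it to be a geodesic precisely because the parallel-field equations degenerate in a favorable way.

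The key steps, in order: (i) Using $\n_{E^+}E^+ = \sum_k\big(E^+(x_k) + \sum_{i,j}\G_{ij}^k x_i x_j\big)e_k$, compute the three components. Since $x_3=0$, the $e_1$- and $e_2$-components involve $E^+(x_1)$, $E^+(x_2)$ and the sums $\sum_{i,j\le 2}\G_{ij}^k x_i x_j$, while the $e_3$-component is purely $\sum_{i,j\le 2}\G_{ij}^3 x_i x_j = x_1^2\G_{11}^3 + 2x_1x_2\G_{12}^3 + x_2^2\G_{22}^3$ (symmetrizing $\G_{12}^3 + \G_{21}^3$). (ii) For the $e_3$-component, substitute $x_2 = -mx_1$ to get $x_1^2(\G_{11}^3 - 2m\G_{12}^3 + m^2\G_{22}^3)$ — wait, more carefully $x_1^2\G_{11}^3 + x_1x_2(\G_{12}^3+\G_{21}^3) + x_2^2\G_{22}^3 = x_1^2(\G_{11}^3 - m(\G_{12}^3+\G_{21}^3) + m^2\G_{22}^3)$ — and invoke the first line of (\ref{point_p}), namely $\G_{11}^3 = m\G_{12}^3 = m\G_{21}^3 = m^2\G_{22}^3$, which forces this to vanish. (iii) For the $e_1$- and $e_2$-components, compute $E^+(x_1)$ and $E^+(x_2)$ in terms of $E^+(m) = x_1 e_1(m) + x_2 e_2(m) = (1+m^2)^{-1/2}(e_1(m) - m e_2(m))$, and expand the Christoffel sums; the $x_1x_3$, $x_2x_3$, $x_3^2$ terms drop out since $x_3=0$. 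One is left with expressions in $\G_{11}^1,\G_{12}^1,\G_{22}^1,\G_{11}^2,\G_{12}^2,\G_{22}^2$, $e_1(m)$, $e_2(m)$. Here one uses antisymmetry $\G_{ij}^k = -\G_{ik}^j$ (metric compatibility), so $\G_{11}^1 = \G_{22}^2 = 0$, $\G_{12}^1 = -\G_{11}^2$, etc., together with the derivative relations in (\ref{point_p}): $e_1(m) = (1+m^2)\G_{11}^2 + m\G_{33}^1 - m^2\G_{33}^2$ and $e_2(m) = -(1+m^2)\G_{22}^1 + \G_{33}^1 - m\G_{33}^2$. After substitution the $e_1$- and $e_2$-components collapse to zero.

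The main obstacle is purely computational bookkeeping: correctly handling the $\frac{1}{\sqrt{1+m^2}}$ normalization when differentiating $x_1,x_2$ along $E^+$, and keeping track of which Christoffel symbols survive after imposing both the metric antisymmetry identities and the relations (\ref{point_p}). The conceptual content is minimal — it is essentially the observation that (\ref{point_p}) was extracted precisely from the parallel-transport equations along isocurved directions, so feeding the isocurved vector $E^+$ back into the geodesic equation must return zero. I would organize the computation by treating the $e_3$-component first (cleanest, uses only the first line of (\ref{point_p})), then the $e_1$- and $e_2$-components together, citing the derivative relations of (\ref{point_p}) at the end. One should also note at the outset that all of (\ref{point_p}) holds pointwise on $\mathcal{F}^+$ by Definition \ref{F_def}, so there is no issue passing between the pointwise identities and their use along integral curves of $E^+$.
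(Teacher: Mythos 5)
Your proposal is correct and follows essentially the same route as the paper: compute the three components of $\n_{E^+}E^+$ in the Ricci-diagonalizing frame $\{e_1,e_2,e_3\}$, kill the $e_3$-component with the first line of (\ref{point_p}), and kill the $e_1$- and $e_2$-components by combining the derivative relations in (\ref{point_p}) with metric-compatibility antisymmetries such as $\G_{12}^1=-\G_{11}^2$. The paper's proof is exactly this direct verification, so no further comparison is needed.
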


\begin {proof}
On $\mathcal{C}$, direct calculations shows 
\begin {equation*}
\begin{split}
<\n_{E^+} E^+, e_1 >
  =&\frac{1}{\sqrt{1+m^2}}\{e_1(\frac{1}{\sqrt{1+m^2}})-me_2(\frac{1}{\sqrt{1+m^2}})\\
  -&\frac{m}{\sqrt{1+m^2}} \G_{12}^1+\frac{m^2}{\sqrt{1+m^2}}\G_{22}^1\},
\end{split}
\end{equation*}

\begin {equation*}
\begin{split}
<\n_{E^+} E^+, e_2 >
  =&\frac{1}{\sqrt{1+m^2}}\{e_1(\frac{-m}{\sqrt{1+m^2}})+me_2(\frac{m}{\sqrt{1+m^2}})\\
  +&\frac{1}{\sqrt{1+m^2}} \G_{11}^2-\frac{m}{\sqrt{1+m^2}}\G_{21}^2\},
\end{split}
\end{equation*}
and
\begin {equation*}
\begin{split}
<\n_{E^+} E^+, e_3 >
  =&\dfrac{1}{1+m^2} \{\G_{11}^3-m\G_{21}^3-m\G_{12}^3+m^2 \G_{22}^3\}.
\end{split}
\end{equation*}
Direct verification using  (\ref{point_p}) shows that each of these components is zero, concluding the proof.

\end{proof}


When $\mathcal{F}^+=\mathcal{F}^-=\mathcal{C}$, Lemma \ref{geod_o} implies that the integral curves of $E^+$ are geodesics.
Moreover, by part (3) of Corollary \ref{isovector}, these geodesics are isocurved.

In what follows, we construct a family of isocurved geodesics when 
there exists an open set  $U\subset \mathcal{C}\setminus \mathcal{F}^-$. To prove the desired result, we first prove the lemma below.




\begin{lem} \label{rank2_assist}
Let $p \in  \mathcal{C}\setminus \mathcal{F}^-$ and let $X=(x_1, x_2, x_3)\in U_pM$ be a generic vector.
Let $V^+$ be the vector perpendicular to $X$ defined by (\ref{v_o}). Then along $\g_X(t)=\exp_p (tX)$, 
the parallel vector field $V(t)$ determined by $V(0)=V^+$ satisfies 
$\mathrm{sec}(V(t), \g'_X(t))\equiv \e$.
\end{lem}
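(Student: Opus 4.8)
The statement asserts that for a generic initial vector $X$ at a point $p$ outside $\mathcal{F}^-$, the \emph{particular} parallel field $V(t)$ starting at $V^+(0)$ stays a curvature $\e$ field along the entire geodesic $\g_X$, even after $\g_X$ leaves the region where velocity vectors are generic. The plan is to combine the local result of Lemma \ref{parallel} (one of $V^+$, $V^-$ is parallel for short time) with the defining hypothesis $p\notin\mathcal{F}^-$ to pin down \emph{which} one is parallel near $p$, and then propagate this by an open--closed argument along all of $\g_X$.

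\textbf{Step 1: Near $p$, it is $V^+$ that is parallel.} Since generic vectors form an open set (Remark \ref{genericity}), $\g'_X(t)$ is generic for $t\in[0,\d]$ for some $\d>0$, so $\g_X([0,\d])\subset\mathcal{O}$ and Lemma \ref{parallel} applies: either $V^+(t)$ or $V^-(t)$ is parallel along $\g_X|_{[0,\d]}$. I claim it must be $V^+$. If instead $V^-(t)$ were parallel along $\g_X|_{[0,\d]}$, then by Remark \ref{minus_equivalence} equation (\ref{computation_minus}) holds along $\g_X$, in particular at $t=0$; that is, $X\in B_p^-$. Now I would like to conclude $p\in\mathcal{F}^-$, contradicting the hypothesis. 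The cleanest route: after possibly shrinking $\d$, the velocity vectors $\g'_X(t)$ sweep out (as $X$ ranges over a neighborhood of the given generic vector and $t$ over $[0,\d]$) an open subset of $U_qM$ for $q$ near $p$; more directly, since $p\notin\mathcal{F}^-$ and $\mathcal{F}^-$ is closed, a whole neighborhood of $p$ lies in $\mathcal{C}\setminus\mathcal{F}^-$, and in that neighborhood (\ref{point_m}) fails, hence by Lemma \ref{argument_three} $B_q^-$ has empty interior for $q$ near $p$; by Lemma \ref{limit} this forces $B_q^+$ to have interior, and then Lemma \ref{argument_three} gives (\ref{point_p}) at all such $q$. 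Running the short-time dichotomy of Lemma \ref{parallel} with the observation that (\ref{point_p}) holding identically near $\g_X$ makes (\ref{computation_o}) hold for \emph{every} tangent direction, we get that $V^+(t)$ is parallel along $\g_X|_{[0,\d]}$ (and in the degenerate case both are, which is still fine).

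\textbf{Step 2: Propagate along all of $\g_X$ by open--closed.} Let $V(t)$ be \emph{the} global parallel field with $V(0)=V^+(0)$, defined for all $t$ by completeness. Set
\[
T=\{\, s\geq 0 \;:\; \mathrm{sec}(V(t),\g'_X(t))=\e \text{ for all } t\in[0,s]\,\}.
\]
By Step 1, $[0,\d]\subseteq T$, so $T$ is nonempty, and $T$ is clearly an interval; it is closed by continuity of $\mathrm{sec}(V(t),\g'_X(t))$. It remains to show $T$ is open in $[0,\infty)$, i.e. if $s_0=\sup T<\infty$ then one can push past $s_0$. At $t=s_0$ we have $\mathrm{sec}(V(s_0),\g'_X(s_0))=\e$ with $V(s_0)\perp\g'_X(s_0)$ and $|V(s_0)|=1$. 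If $\g'_X(s_0)$ is still generic, apply Lemma \ref{parallel} (or rather its pointwise input Lemma \ref{plane_o}(2)) on a small interval around $s_0$: near $s_0$ the curvature $\e$ field through $\g'_X$ must be $\pm V^+$ or $\pm V^-$, and since (\ref{point_p}) holds throughout (the orbit stays in $\mathcal{C}\setminus\mathcal{F}^-$, which is open and $\g_X$ cannot re-enter $\mathcal{F}^-$ while staying generic — here I use that along the whole generic portion $V^+$ is the parallel one by the argument of Step 1), the field $V$ agrees with one of these on both sides, extending membership in $T$ past $s_0$. If instead $\g'_X(s_0)$ is \emph{not} generic — so $\g_X$ has reached the boundary of $\mathcal{O}$ or a unicurved/isocurved velocity — then I invoke the structural results for $\mathcal{E}$ and $\mathcal{I}$: at such a point the set of curvature $\e$ planes through $\g'_X(s_0)$ is cut out by the appropriate condition from Corollary \ref{isovector}/Lemma \ref{lem1}, and the parallel field $V$, being the continuous limit of curvature $\e$ fields, must land inside it, so $\mathrm{sec}(V(s_0),\g'_X(s_0))=\e$ is preserved and the rank/continuity argument closes up the interval. (This last case may in fact be vacuous or handled by the convention that $\g_X$ is considered only while in $\mathcal{O}$ — I would check the paper's conventions here.)

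\textbf{Main obstacle.} The delicate point is Step 2 when $\g'_X(t)$ ceases to be generic, or oscillates between generic and unicurved: the dichotomy ``$V^+$ parallel or $V^-$ parallel'' is a statement valid on intervals of genericity, and one must rule out the field $V$ jumping from the $V^+$ family to the $V^-$ family when passing through a unicurved instant (where $V^+$ and $V^-$ coincide up to sign, by Lemma \ref{plane_o}(1)). The resolution is that at a unicurved velocity the curvature $\e$ plane is \emph{unique}, so $V(t)$ has no choice but to equal $\pm V^+(t)=\pm V^-(t)$ there, and continuity then forces it to re-emerge in the $V^+$ family (using that (\ref{point_p}), hence (\ref{computation_o}), holds at every point of the relevant geodesic segment because it lies in the open set $\mathcal{C}\setminus\mathcal{F}^-$ and $V^+$ satisfies the parallel equation identically). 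Making this ``no jumping'' argument airtight — essentially a connectedness argument on the sign/branch of the curvature $\e$ field along $\g_X$ — is where the real care is needed; everything else is the bookkeeping of Lemmas \ref{parallel}, \ref{computation}, \ref{limit}, and \ref{argument_three} already in hand.
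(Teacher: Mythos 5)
There is a genuine gap, and it sits exactly where you flagged it: Step 2. The paper never needs an open--closed propagation of the condition $\mathrm{sec}(V(t),\g_X'(t))=\e$, because the higher rank hypothesis already hands you a \emph{globally defined} unit normal parallel field $W(t)$ along all of $\g_X$ with $\mathrm{sec}(W(t),\g_X'(t))\equiv\e$. The only thing to check is that $W(0)=\pm V^+$: since $\g_X'(t)$ is generic for small $t$, Lemma \ref{parallel} (via Lemma \ref{plane_o}(2)) forces $W=\pm V^+$ or $\pm V^-$ near $t=0$, and the $V^-$ alternative is excluded because it would put $X$ in $\mathit{B}_p^-$ (Remark \ref{minus_equivalence}); the paper first reduces, by density of $U_pM\setminus\mathit{B}_p^-$ (Lemma \ref{argument_three} gives $\mathit{B}_p^-$ empty interior) and continuity of the conclusion in $X$, to the case $X\notin\mathit{B}_p^-$. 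Once $W(0)=\pm V^+=\pm V(0)$, uniqueness of parallel fields gives $V(t)=\pm W(t)$ for all $t$, and the conclusion holds along the entire geodesic with no further work. Your Step 1 (using that a neighborhood of $p$ lies in $\mathcal{F}^+$, so (\ref{point_p}) and hence (\ref{computation_o}) hold identically nearby) is a legitimate alternative for the short-time identification, but it only buys you the conclusion while $\g_X$ stays in that neighborhood with generic velocities.

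Your Step 2 cannot be repaired by continuity and the local dichotomy alone. The ``openness'' step fails precisely when $\g_X'(s_0)$ ceases to be generic or $\g_X$ exits $\mathcal{C}$: there the classification of Lemma \ref{plane_o} no longer pins the curvature $\e$ planes through $\g_X'$ to a discrete set (at isotropic points every plane works, and beyond $\p\mathcal{C}$ the quantities $m$, $V^\pm$ are not even defined), so knowing $\mathrm{sec}(V(s_0),\g_X'(s_0))=\e$ gives no control just after $s_0$. Indeed the lemma is false as a purely local statement about $\mathrm{cvc}(\e)$ metrics; the global rank hypothesis is essential, and your proof invokes it only through the short-time dichotomy, never through the existence of the global rank field. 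Your own closing paragraph concedes that the ``no jumping'' argument at unicurved instants is not established; in the paper's argument this issue never arises, because nothing is propagated --- the global field $W$ is curvature $\e$ everywhere by hypothesis, and $V$ is identified with $\pm W$ once, at $t=0$.
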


\begin{proof}
As $p\in \mathcal{C}\setminus \mathcal{F}^-$, Lemma \ref{argument_three} implies that $\mathit{B}_p^-$ has no interior in $U_pM$.
By continuity, it suffices to prove the lemma for generic vectors $X \in U_pM\setminus \mathit{B}_p^-$.

Let $X=(x_1, x_2, x_3)\in U_pM\setminus \mathit{B}_p^-$ be a generic vector.
As $M$ has higher rank, there exists a normal parallel field $V(t)$ along $\g_X(t)$ such that $\mathit{sec}(V(t), \g'_X(t))\equiv \e$.
Since $X$ is generic, Lemma \ref{parallel} implies that $V(t)=V^+(t)$ or $V(t)=V^-(t)$ for a short time. If $V(t)=V^-(t)$ for a short time, then by Remark \ref {minus_equivalence}, (\ref{computation_minus}) holds for $X$ at $p$, contrary to the assumption that $X\notin\mathit{B}_p^-$. 

\end{proof}
 
\begin{prop} \label{rank2_prop}
Let $p\in \mathcal{C}\setminus \mathcal{F}^-$. Then the geodesic $\a (t)=\exp_p (tE^+)$ is isocurved.
\end{prop}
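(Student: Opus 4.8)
The plan is to show that the integral curve $\a(t)=\exp_p(tE^+)$ stays in $\mathcal{C}\setminus\mathcal{F}^-$ for all time it is defined, and then to invoke Lemma \ref{geod_o} together with Lemma \ref{rank2_assist} to conclude. The first observation is that $E^+$ is an isocurved vector field (Remark \ref{isocurved_property}), so once we know $\a(t)$ is a geodesic, Corollary \ref{isovector} immediately gives that $\a$ is isocurved. Thus the whole content of the proposition is: \emph{$\a$ is a geodesic}, i.e. $\n_{\a'}\a'=0$ along $\a$. Since $p\notin\mathcal{F}^-$, Lemma \ref{geod_o} tells us $\n_{E^+}E^+=0$ at every point of $\mathcal{F}^+$, and $\mathcal{F}^+=\mathcal{C}\setminus\mathcal{F}^- \cup(\mathcal{F}^+\cap\mathcal{F}^-)$; in particular $\n_{E^+}E^+=0$ on the open set $\mathcal{C}\setminus\mathcal{F}^-$ containing $p$. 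So for small $t$, $\a(t)=\exp_p(tE^+)$ is genuinely the integral curve of $E^+$ and it is a geodesic; the issue is only to propagate this past the boundary $\p(\mathcal{C}\setminus\mathcal{F}^-)$.

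The key step, which I expect to be the main obstacle, is the continuation argument. Let $I$ be the maximal interval on which $\a(t)$ is defined and remains in $\mathcal{C}$, and let $J\subseteq I$ be the set of $t$ for which $\a'(t)=E^+(\a(t))$. By the local analysis above, $J$ is open and contains a neighborhood of $0$; one must show $J$ is also closed in $I$. Suppose $t_n\to t_*\in I$ with $t_n\in J$. Then $\a'(t_*)=\lim \a'(t_n)=\lim E^+(\a(t_n))=E^+(\a(t_*))$ by continuity of $E^+$ and of $\a'$, so $t_*\in J$ provided $E^+$ is continuous at $\a(t_*)$ — which it is, as $E^+$ is smooth on all of $\mathcal{C}$. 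Hence $J=I$, so $\a'(t)=E^+(\a(t))$ throughout, and now $\n_{\a'}\a'=\n_{E^+}E^+$ evaluated along $\a$. To see this vanishes everywhere along $\a$ — not just on $\mathcal{C}\setminus\mathcal{F}^-$ — note that at any point $\a(t_*)\in\mathcal{F}^-\cap\mathcal{F}^+$ we still have $\n_{E^+}E^+=0$ by Lemma \ref{geod_o} (which only requires membership in $\mathcal{F}^+$); and a point of $\a$ cannot lie in $\mathcal{F}^-\setminus\mathcal{F}^+$ because, being a limit of points $\a(t_n)$ where $\a'=E^+$ is isocurved and the geodesic equation holds, continuity of the second-order ODE forces $\a$ to remain the $E^+$-integral curve, while an interior excursion into $\mathcal{F}^-\setminus\mathcal{F}^+$ would (by Lemma \ref{geod_o} applied with the switched frame, or directly) contradict $\n_{E^+}E^+=0$. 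More cleanly: $\mathcal{F}^+$ is closed and contains $\mathcal{C}\setminus\mathcal{F}^-$, hence contains its closure; since $\a$ starts in $\mathcal{C}\setminus\mathcal{F}^-$ and coincides with the $E^+$-integral curve as long as that curve stays in $\mathcal{F}^+$, and $\n_{E^+}E^+=0$ on all of $\mathcal{F}^+$, the integral curve never has any reason to leave $\mathcal{F}^+$, so $\a(t)\in\mathcal{F}^+$ for all $t\in I$ and $\n_{\a'}\a'=0$ throughout.

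Finally, with $\a$ established as a geodesic lying in $\mathcal{C}$ with $\a'(t)=E^+(\a(t))$ isocurved for every $t$, the definition of an isocurved geodesic is satisfied and the proposition follows. Lemma \ref{rank2_assist} is not strictly needed for the statement as written, but it is the natural companion fact (it guarantees that the parallel curvature-$\e$ plane field along $\a$ is the one singled out by $V^+$), and I would mention it here since the subsequent sections use $\a$ together with that specific parallel plane field. The only real care needed in the write-up is the topological continuation argument in the previous paragraph: making precise that the set of times where $\a$ agrees with the $E^+$-flow is open and closed in the maximal existence interval, using smoothness of $E^+$ on $\mathcal{C}$ and closedness of $\mathcal{F}^+$.
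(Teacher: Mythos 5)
There is a genuine gap, and it sits exactly where you flagged the ``main obstacle.'' Your argument reduces the proposition to showing that $\a$ coincides with the integral curve of $E^+$ for all time in $\mathcal{C}$, and the openness/continuation step of that reduction is not justified. Lemma \ref{geod_o} gives $\n_{E^+}E^+=0$ only on the \emph{closed} set $\mathcal{F}^+$, and closed sets are not invariant under flows: even if $\a(t_*)\in\mathcal{F}^+$ and $\a'(t_*)=E^+(\a(t_*))$, the geodesic may immediately enter $\mathcal{C}\setminus\mathcal{F}^+$ (which is open and where only (\ref{point_m}) is known to hold), and there $\n_{E^+}E^+$ need not vanish, so the integral curve of $E^+$ through $\a(t_*)$ need not be a geodesic and nothing forces $\a'$ to remain equal to $E^+$ or to remain isocurved. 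The sentence ``the integral curve never has any reason to leave $\mathcal{F}^+$'' is an invariance claim with no supporting mechanism; note also that $\overline{\mathcal{C}\setminus\mathcal{F}^-}\subset\mathcal{F}^+$ only controls where $\a$ starts, not where it goes. In fact the paper's Lemma \ref{integrability_lem} derives the vanishing of $\G_{22}^3,\G_{12}^3,\G_{21}^3,\G_{11}^3$ on $\mathcal{F}^-$ \emph{from} the already-established fact that $\a$ is a geodesic tangent to $E^+$; your route would need that vanishing as an input, which is circular.

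The paper avoids this entirely by proving isocurvedness of $\a$ directly from the higher rank hypothesis, and your remark that Lemma \ref{rank2_assist} is ``not strictly needed'' is exactly backwards: it is the engine of the proof. Since generic vectors are dense among unit vectors perpendicular to $E^+$, one fixes a generic $W\perp E^+$, forms $c(s)=\cos(s)E^+ +\sin(s)W$ and the geodesics $\g_s(t)=\exp_p(tc(s))$, checks that $c'(s)$ is $-V^+$ associated to the generic vector $c(s)$, and applies Lemma \ref{rank2_assist} (which uses $p\notin\mathcal{F}^-$, i.e.\ that $\mathit{B}_p^-$ has empty interior, so the rank-produced parallel field along these geodesics must be the $V^+$-field) to get a parallel field of curvature-$\e$ planes along each $\g_s$; letting $s\to 0$ gives the same along $\a$ for every such $W$, hence $\a'(t)$ is isocurved for all $t$. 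Only afterwards does Corollary \ref{isocurved_geo_velocity} conclude $\a'=E^+$ on $\mathcal{C}$ --- and its openness step uses Proposition \ref{rank2_prop}, so the implication you want to run ($\a'=E^+$ first, isocurved second) cannot be obtained from the ingredients you cite. To repair your proposal you would have to replace the topological continuation argument by the perturbation argument through generic directions, i.e.\ essentially adopt the paper's proof.
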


\begin{proof}
As generic vectors are dense amongst the unit vectors perpendicular to $E^+$, it suffices to prove that each 
generic unit vector $W\perp E^+$ generates a parallel vector field $W(t)$ along $\a(t)$ satisfying $\mathrm{sec}(\a'(t), W(t))\equiv \e$.

Given $W\in U_pM$ as above, consider the great circle 
\begin{equation} \label{rank2_1}
c(s)=\cos{(s)} E^+ +\sin {(s)} W.
\end{equation}
For each small $s\geq0$, let $\g_s(t)=\exp_p (t c(s))$, and note that $\a(t)=\g_0(t)$ and that $W=c'(0)$.
Hence, by continuity, it suffices to prove that $c'(s)$ generates a parallel vector field $W_s(t)$ along $\g_s(t)$
satisfying $\mathrm{sec}(\g_s'(t), W_s(t))\equiv \e$.

Note that direct computation after substituting (\ref{rank2_1}) into (\ref{v_o}) shows that $c'(s)$ is $-V^+$ associated to $c(s)$. 
The proposition now follows from Lemma \ref{rank2_assist}.


\end{proof}

\begin{cor} \label{isocurved_geo_velocity}
Let $p\in \mathcal{C}\setminus \mathcal{F}^-$ and let $\a(t)=\exp_p (tE^+)$. Let $I\subset \mathbb{R}$ be the largest connected interval containing zero such that $\a(I)\subset \mathcal{C}$. Then on $I$, $\a'=E^+$. 
\end{cor}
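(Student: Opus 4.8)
The plan is to show that the velocity field of the isocurved geodesic $\a(t)=\exp_p(tE^+)$ agrees with the ambient vector field $E^+$ for as long as $\a$ stays in $\mathcal{C}$. The key point is that $\a'(0)=E^+(p)$ by definition of the exponential map, and $E^+$ restricted to $\a$ should be shown to be itself a geodesic field in the relevant sense, so that the uniqueness of geodesics with prescribed initial conditions forces $\a'(t)=E^+(\a(t))$ on all of $I$.

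First I would set up the comparison on the (relatively open, connected) subinterval $J\subset I$ consisting of parameters $t$ for which $\a(s)\in\mathcal{C}\setminus\mathcal{F}^-$ for all $s$ between $0$ and $t$; note $0\in J$ since $p\in\mathcal{C}\setminus\mathcal{F}^-$ and $\mathcal{C}\setminus\mathcal{F}^-$ is open in $\mathcal{C}$. On $J$, by Proposition \ref{rank2_prop} the geodesic $\a$ is isocurved, and more importantly every point of $\a(J)$ lies in $\mathcal{C}\setminus\mathcal{F}^-$, hence in $\mathcal{F}^+$ (since $\mathcal{F}^+\cup\mathcal{F}^-=\mathcal{C}$). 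By Lemma \ref{geod_o}, $\n_{E^+}E^+=0$ at every point of $\mathcal{F}^+$, so the integral curve $\beta$ of $E^+$ through $p$ is a geodesic as long as it remains in $\mathcal{F}^+$. Since $\beta'(0)=E^+(p)=\a'(0)$ and both $\a$ and $\beta$ are geodesics, uniqueness of geodesics with given initial position and velocity gives $\a(t)=\beta(t)$ and $\a'(t)=E^+(\a(t))$ for all $t$ in a neighborhood of $0$, and a standard connectedness (continuation) argument extends this to all of $J$: the set of $t\in J$ where $\a'(t)=E^+(\a(t))$ is nonempty, open (rerun the uniqueness argument at each such point, still inside $\mathcal{F}^+$), and closed in $J$ by continuity.

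It then remains to upgrade "$\a'=E^+$ on $J$" to "$\a'=E^+$ on $I$". The issue is whether $\a$ can leave the open set $\mathcal{C}\setminus\mathcal{F}^-$ while still remaining in $\mathcal{C}$, i.e.\ whether $J$ can be a proper subinterval of $I$. Suppose $t_0\in I$ is an endpoint of $J$ with $t_0$ finite and $\a(t_0)\in\mathcal{C}$. By continuity $\a'(t_0)=E^+(\a(t_0))$, so $\a'(t_0)$ is an isocurved vector at $\a(t_0)$; since $\a(t_0)\in\mathcal{C}$, point (3) of Corollary \ref{isovector} applies, and in particular $\a'(t_0)$ has vanishing $e_3$-component and satisfies $x_2=\pm m x_1$ — and because $\a'(t_0)=E^+$, it is precisely the "$+$" isocurved direction. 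One then argues that $\a(t_0)\notin\mathcal{F}^-$: if $\a(t_0)\in\mathcal{F}^-$ then (\ref{point_m}) holds there, and an argument parallel to Lemma \ref{geod_o} (with $E^-$ in place of $E^+$, using (\ref{point_m})) together with the fact that $E^+$ is transported as a parallel isocurved field along $\a$ forces a contradiction with $\a(t_0)$ being an interior-type accumulation point of $\mathcal{F}^+$-points; alternatively, one uses Lemma \ref{rank2_assist}'s dichotomy to see that the $V^+$-family along $\a$ cannot switch to the $V^-$-family. Hence $\a(t_0)\in\mathcal{C}\setminus\mathcal{F}^-$, contradicting that $t_0$ is an endpoint of $J$; therefore $J=I$.

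The main obstacle I anticipate is exactly this last step — ruling out that the geodesic $\a$ slips from $\mathcal{C}\setminus\mathcal{F}^-$ into $\mathcal{F}^-$ (equivalently, that the parallel curvature-$\e$ plane family quietly transitions from the $V^+$ branch to the $V^-$ branch) while staying inside $\mathcal{C}$. Everything else is a soft patching argument built on geodesic uniqueness plus Lemma \ref{geod_o}; the real content is the closedness/openness bookkeeping for the branch $\mathcal{F}^+$ versus $\mathcal{F}^-$ along $\a$, which is where the hypothesis $p\in\mathcal{C}\setminus\mathcal{F}^-$ and Lemma \ref{rank2_assist} do the heavy lifting. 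If the intended proof is shorter, it likely just invokes Proposition \ref{rank2_prop} to get that $\a$ is isocurved, notes that an isocurved velocity in $\mathcal{C}$ must be $\pm E^+$ or $\pm E^-$ by Corollary \ref{isovector}, observes that by continuity and $\a'(0)=E^+$ the sign and branch cannot jump, and concludes $\a'=E^+$ on $I$ directly.
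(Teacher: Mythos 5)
There is a genuine gap at the step you yourself flag as the crux. Your argument is fine on the subinterval $J$ where $\alpha$ stays in $\mathcal{C}\setminus\mathcal{F}^-\subset\mathcal{F}^+$: there $\nabla_{E^+}E^+=0$ (Lemma \ref{geod_o}) and uniqueness of geodesics gives $\alpha'=E^+$. But the extension from $J$ to $I$ rests on the claim that a finite endpoint $t_0\in I$ of $J$ cannot satisfy $\alpha(t_0)\in\mathcal{F}^-$, and this claim is neither proved by your sketch nor true in the generality you need. Since $\mathcal{C}\setminus\mathcal{F}^-\subset\mathcal{F}^+$ and $\mathcal{F}^+$ is closed, such a boundary point automatically lies in $\mathcal{F}^+\cap\mathcal{F}^-$, i.e.\ both (\ref{point_p}) and (\ref{point_m}) hold there; this is perfectly consistent (the paper even allows $\mathcal{F}^+=\mathcal{F}^-=\mathcal{C}$ as alternative (1) of Lemma \ref{alternative}, and Lemma \ref{integrability_lem} explicitly treats times with $\alpha(t)\in\mathcal{F}^-$), so no contradiction is available and no ``switch of the $V^+$-family to the $V^-$-family'' is forced --- indeed $V^\pm$ are not even defined along $\alpha$ once $\alpha'$ is isocurved, since then $K=0$. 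In short, the statement you are trying to prove by contradiction (that $\alpha$ never meets $\mathcal{F}^-$ while in $\mathcal{C}$) is not needed for the corollary and cannot be established this way.

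The paper's actual proof is the ``shorter'' route you only mention as a guess in your last sentence, and its key point is precisely what your main argument lacks: Proposition \ref{rank2_prop} (via Lemma \ref{rank2_assist}, whose parallel fields make curvature $\varepsilon$ with $\alpha'$ for \emph{all} $t$, not merely while $\alpha$ remains in $\mathcal{C}\setminus\mathcal{F}^-$) shows that $\alpha'(t)$ is isocurved along the entire geodesic issuing from $p$. One then sets $T=\{t\in I \mid \alpha'(t)=E^+(\alpha(t))\}$, which is nonempty and closed; openness follows because at points of $\mathcal{C}$ the isocurved unit vectors are exactly $\pm E^+,\pm E^-$ (Corollary \ref{isovector}, Remark \ref{isocurved_property}), and continuity of $\alpha'$ forbids jumping between these four separated directions. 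Connectedness of $I$ then gives $T=I$, with no control of where $\alpha$ sits relative to $\mathcal{F}^\pm$ required. To repair your write-up you should replace the $J$-to-$I$ contradiction argument by this observation that isocurvedness of $\alpha'$ persists for all time because it was established by parallel fields anchored at the initial point $p\in\mathcal{C}\setminus\mathcal{F}^-$.
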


\begin{proof}
Consider the subset $T\subset I$ defined by
$$T=\{t\in I\,|\, \a'(t)=E^+(\a(t))\}.$$
By construction, $0\in T$, and $T$ is closed. As $I$ is connected, it remains to prove that $T$ is open.

For $s\in T$, $\a'(t)$ is isocurved for each $t$ in a neighborhood of $s$ by Proposition \ref{rank2_prop}. The lemma now follows by the
continuity of the velocity vectors $\a'(t)$ and Remark \ref{isocurved_property}.

\end{proof}
 
We summarize the results obtained in this subsection in the following proposition.
\begin{prop} \label{tube}
There exists an open subset $U\subset \mathcal{C}$ with the property that each integral curve of 
$E^+$ with initial point in $U$ is an isocurved geodesic.

\end{prop}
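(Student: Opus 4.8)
\textbf{Proof plan for Proposition \ref{tube}.}

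The plan is to combine the two separate situations allowed by Lemma \ref{alternative}, extracting in each case an open set over which the integral curves of $E^+$ are isocurved geodesics. By the Convention following Lemma \ref{alternative}, the Ricci diagonalizing frame has been chosen so that either (1) $\mathcal{F}^+=\mathcal{F}^-=\mathcal{C}$, or (2) $\mathcal{C}\setminus\mathcal{F}^-$ contains an open set $U$ in $\mathcal{C}$.

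First I would dispose of case (1). If $\mathcal{F}^+=\mathcal{C}$, then Lemma \ref{geod_o} gives $\n_{E^+}E^+=0$ on all of $\mathcal{C}$, so every integral curve of $E^+$ is a geodesic; by part (3) of Corollary \ref{isovector} (or equivalently Remark \ref{isocurved_property}) the vector field $E^+$ is isocurved at each point, so these geodesics are isocurved. Thus one may take $U=\mathcal{C}$.

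Next I would treat case (2). Here $U\subset\mathcal{C}\setminus\mathcal{F}^-$ is a nonempty open subset of $\mathcal{C}$, and I claim it has the required property. Fix $p\in U$ and let $\a(t)=\exp_p(tE^+)$; let $I$ be the largest interval about $0$ with $\a(I)\subset\mathcal{C}$. For $t$ near $0$ the point $\a(t)$ lies in the open set $\mathcal{C}\setminus\mathcal{F}^-$, so Proposition \ref{rank2_prop} applies at $\a(t)$ and shows $\a$ is isocurved near $0$; more to the point, Corollary \ref{isocurved_geo_velocity} shows that on all of $I$ one has $\a'=E^+$, i.e.\ the integral curve of $E^+$ through $p$ coincides with the geodesic $\a$ as long as it stays in $\mathcal{C}$. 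Since $\a'(t)=E^+(\a(t))$ is isocurved for every $t\in I$ by Remark \ref{isocurved_property}, the integral curve of $E^+$ through $p$ is an isocurved geodesic. As $p\in U$ was arbitrary, $U$ is the desired open set.

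The only subtlety — and the step I would be most careful about — is the compatibility of the two notions of ``curve'': an integral curve of the vector field $E^+$ versus the geodesic $\a(t)=\exp_p(tE^+)$. This is exactly what Corollary \ref{isocurved_geo_velocity} resolves, via the connectedness/open-closed argument showing $T=\{t\in I : \a'(t)=E^+(\a(t))\}$ is all of $I$; once $\a'=E^+$ on $I$, the integral curve of $E^+$ is automatically this geodesic and is isocurved. No further computation is needed, since Lemmas \ref{geod_o}, \ref{rank2_assist}, Proposition \ref{rank2_prop}, and Corollary \ref{isocurved_geo_velocity} already package all the analytic work.
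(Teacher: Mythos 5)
Your proposal is correct and follows essentially the same route as the paper: case (1) is handled by Lemma \ref{geod_o} together with the isocurved property of $E^+$, and case (2) by taking $U\subset\mathcal{C}\setminus\mathcal{F}^-$ and invoking Proposition \ref{rank2_prop} and Corollary \ref{isocurved_geo_velocity}. The extra care you take in identifying the integral curve of $E^+$ with the geodesic $\exp_p(tE^+)$ is exactly the point Corollary \ref{isocurved_geo_velocity} is designed to settle, so no gap remains.
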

\begin{proof}
As $\n_{E^+} E^+=0$ on $\mathcal{F}^+$, the proposition is obvious when (1) holds in Lemma \ref{alternative}.
When (2) in Lemma \ref{alternative} occurs, take $U$ to be an open subset of $\mathcal{C}\setminus \mathcal{F}^-$, and the proposition follows from Proposition \ref{rank2_prop} and Corollary \ref{isocurved_geo_velocity}.

\end{proof}

\subsection{Evolution Equations}

By Proposition \ref{tube}, there exists a family of isocurved geodesics in $\mathcal{C}$ that are tangent to $E^+$. In this subsection,
let $\a$ be one of these geodesics constructed in Proposition \ref {tube}. 
The goal of this subsection is to 
derive first order differential equations satisfied by geometric quantities along $\a$ in Proposition \ref{evo_tr} below.
The geometric quantities are described in terms of a new framing over $\mathcal{C}$.

\begin{dfn}
Define another orthonormal frame $\{E_1, E_2, E_3\}$ on $\mathcal{C}$ by
$E_1=E^+$, $E_3=e_3$, and
$$E_2=\frac {m}{\sqrt{1+m^2}}e_1+ \frac{1}{\sqrt{1+m^2}}e_2.$$

\end{dfn}
By Remark \ref{uniqueness}, the orthonormal frame $\{E_1, E_2, E_3\}$ is not Ricci diagonalizing. In what follows below, we reserve the 
Christoffel symbol notation $\G_{ij}^k$ for $<\n_{e_i} e_j, e_k>$.

\begin{lem} \label{x_lemma}
With respect to $\{E_1, E_2, E_3\}$, at least one of $R(E_1, E_2, E_2, E_3)$ or $R(E_1, E_3, E_3,E_2)$ is nonzero, and
\begin{equation} \label{x}
 R(E_1, E_2, E_3, E_1)=0.
\end{equation}

\end{lem}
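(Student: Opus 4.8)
The plan is to compute the curvature components $R(E_1,E_2,E_3,E_1)$, $R(E_1,E_2,E_2,E_3)$, and $R(E_1,E_3,E_3,E_2)$ directly by expanding the new frame $\{E_1,E_2,E_3\}$ in terms of the Ricci-diagonalizing frame $\{e_1,e_2,e_3\}$, using multilinearity of $R$ and the hypotheses (\ref{Ricci_diag}) that the mixed components $R(e_1,e_2,e_3,e_1)$, $R(e_1,e_2,e_2,e_3)$, $R(e_1,e_3,e_3,e_2)$ all vanish. Writing $E_1 = \tfrac{1}{\sqrt{1+m^2}}(e_1 - m e_2)$, $E_2 = \tfrac{1}{\sqrt{1+m^2}}(m e_1 + e_2)$, and $E_3 = e_3$, the only nonvanishing curvature quantities available in the $e_i$-frame are the three sectional curvatures $\lambda_{ij} = \mathrm{sec}(e_i,e_j)$ (equivalently $R(e_i,e_j,e_j,e_i)$), so every expansion collapses to a combination of $\lambda$, $\e$, $\L$.

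First I would verify (\ref{x}): expand $R(E_1,E_2,E_3,E_1)$. Note $E_1$ and $E_2$ span the same 2-plane as $e_1,e_2$, so $R(E_1,E_2,\cdot,\cdot)$ is $R(e_1,e_2,\cdot,\cdot)$ up to the nonzero scalar coming from the area element (since $E_1,E_2$ is an orthonormal basis of $\mathrm{span}(e_1,e_2)$, in fact $R(E_1,E_2,X,Y) = R(e_1,e_2,X,Y)$ for all $X,Y$). Hence $R(E_1,E_2,E_3,E_1)$ is a linear combination of $R(e_1,e_2,e_3,e_1)$ and $R(e_1,e_2,e_3,e_2)$, both of which vanish by (\ref{Ricci_diag}) (the second is $R(e_1,e_2,e_2,e_3)=0$ up to symmetry). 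This gives (\ref{x}).

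Next I would address the first assertion, arguing by contradiction: suppose both $R(E_1,E_2,E_2,E_3)=0$ and $R(E_1,E_3,E_3,E_2)=0$. Combined with (\ref{x}), this says that $\{E_1,E_2,E_3\}$ is a Ricci-diagonalizing frame at $p$ by the criterion (\ref{Ricci_diag}). But $p \in \mathcal{O}$ is generic, so by Remark \ref{uniqueness} the Ricci eigenlines are the three \emph{distinct} lines $\mathbb{R}e_1, \mathbb{R}e_2, \mathbb{R}e_3$, and any Ricci-diagonalizing orthonormal frame must consist of vectors along these lines. Since $E_1 = \tfrac{1}{\sqrt{1+m^2}}(e_1 - m e_2)$ with $m \neq 0$ is not a multiple of any $e_i$, this is a contradiction. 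The main (minor) obstacle is just bookkeeping in the expansion for (\ref{x}) — making sure the cross terms in $R(E_1,E_2,E_3,E_1)$ really are exactly the vanishing components of (\ref{Ricci_diag}) and that no $\lambda_{ij}$ survives; the contradiction step is then immediate from the uniqueness of Ricci eigenlines on $\mathcal{O}$.
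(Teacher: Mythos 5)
Your proposal is correct, and its second half coincides with the paper's argument: assuming both off-diagonal components vanish, (\ref{x}) together with the criterion (\ref{Ricci_diag}) would make $\{E_1,E_2,E_3\}$ a Ricci-diagonalizing frame, which is impossible at a generic point since the Ricci eigenvalues are distinct there and the eigenlines are exactly $\mathbb{R}e_1,\mathbb{R}e_2,\mathbb{R}e_3$, none of which contains $E_1$ (as $m\neq 0$). Where you differ is in the proof of (\ref{x}): you expand directly in the $e_i$-frame, noting that $E_1\wedge E_2=e_1\wedge e_2$ (the change of basis in $\mathrm{span}(e_1,e_2)$ is a rotation), so $R(E_1,E_2,E_3,E_1)$ is a combination of $R(e_1,e_2,e_3,e_1)$ and $R(e_1,e_2,e_3,e_2)=\pm R(e_1,e_2,e_2,e_3)$, both killed by (\ref{Ricci_diag}); this uses only multilinearity and the Ricci-diagonalization of the $e$-frame. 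The paper instead polarizes the isocurved property of $E_1$: since $R(E_1,aE_2+bE_3,aE_2+bE_3,E_1)=\e$ for all $a^2+b^2=1$ and the pure terms already contribute $\e$, the cross term $2ab\,R(E_1,E_2,E_3,E_1)$ must vanish. Your route is more elementary and self-contained (it does not invoke the higher-rank input that $E^+$ is isocurved, only the explicit change of frame), while the paper's polarization argument is frame-free in the sense that it works for any orthonormal completion $\{E_2,E_3\}$ of an isocurved unit vector $E_1$, without needing the explicit components of $E_1,E_2$ in the Ricci eigenframe. Both establish the lemma.
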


\begin{proof}
As $E_1$ is isocurved, for every $a, b$ such that $a^2+b^2=1$,
\begin{equation*}
\begin{split}
\e&=R(E_1, a E_2+b E_3, a E_2+b E_3, E_1)\\
&= a^2R(E_1, E_2, E_2, E_1)+b^2R(E_1, E_3, E_3, E_1)+2ab R(E_1, E_2, E_3, E_1)\\
&= \e+2ab R(E_1, E_2, E_3, E_1).
\end{split}
\end{equation*}
The lemma now follows from (\ref{Ricci_diag}), since $\{E_1, E_2, E_3\}$ does not diagonalize Ricci.

\end{proof}

Note that by Proposition \ref{tube},
\begin{equation} \label{X_is_geodesic_field}
\n_{E_1} E_1=0
\end{equation}
along $\a$.
By using (\ref{X_is_geodesic_field}), the following Christoffel symbols vanish when the geodesic $\a$ enters $\mathcal{F}^-$. 

\begin{lem} \label{integrability_lem}
If $\a(t) \in \mathcal{F}^-$, then
\begin{equation} \label{Integrability}
\G_{22}^3=\G_{12}^3=\G_{21}^3=\G_{11}^3=0.
\end{equation} 

\end{lem}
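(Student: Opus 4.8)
The plan is to exploit that $\a$ is an integral curve of $E^+=E_1$ which is at the same time a geodesic, so that $\n_{E_1}E_1=0$ along $\a$ as recorded in (\ref{X_is_geodesic_field}); reading off the $e_3$-component of this identity at a point of $\mathcal{F}^-$ will force $\G_{22}^3$, and hence all the Christoffel symbols appearing in (\ref{Integrability}), to vanish.

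First I would note the identity
\[
\langle \n_{E^+}E^+,e_3\rangle=\frac{1}{1+m^2}\bigl(\G_{11}^3-m\G_{21}^3-m\G_{12}^3+m^2\G_{22}^3\bigr),
\]
which is exactly the $e_3$-component of $\n_{E^+}E^+$ computed in the proof of Lemma \ref{geod_o}. Crucially, that computation uses only the definition of $E^+$ (and the fact that $E^+$ has no $e_3$-component, so no derivative of $m$ enters the $e_3$-slot), hence the formula is valid on all of $\mathcal{C}$, in particular on $\mathcal{F}^-$ where (\ref{point_p}) need not hold. By Proposition \ref{tube} and Corollary \ref{isocurved_geo_velocity} we have $\a'=E^+$ on the maximal interval on which $\a$ stays in $\mathcal{C}$, that is $\n_{E_1}E_1=0$ along $\a$, so the left-hand side above vanishes along $\a$.

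Now suppose $\a(t)\in\mathcal{F}^-$. Then (\ref{point_m}) gives $\G_{11}^3=-m\G_{12}^3=-m\G_{21}^3=m^2\G_{22}^3$, so each of the four summands in the bracket equals $m^2\G_{22}^3$ and the identity becomes $0=\tfrac{4m^2}{1+m^2}\G_{22}^3$. Since $m\neq 0$ on $\mathcal{O}$, this forces $\G_{22}^3=0$ at $\a(t)$, whereupon (\ref{point_m}) immediately yields $\G_{12}^3=\G_{21}^3=\G_{11}^3=0$ as well, which is (\ref{Integrability}). The argument is entirely routine; the only place that asks for a moment's care is checking that the $e_3$-component formula for $\n_{E^+}E^+$ is independent of whether (\ref{point_p}) or (\ref{point_m}) holds at the point, so that it may legitimately be combined with the geodesic equation on $\mathcal{F}^-$.
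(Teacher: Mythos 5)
Your argument is correct and is essentially identical to the paper's proof: both compute the $e_3$-component of $\n_{E_1}E_1$, use the geodesic property (\ref{X_is_geodesic_field}) to make it vanish, and then substitute (\ref{point_m}) to reduce the bracket to $4m^2\G_{22}^3$, forcing $\G_{22}^3=0$ and hence the remaining vanishings. Your added remark that the component formula is valid regardless of whether (\ref{point_p}) or (\ref{point_m}) holds is a correct and worthwhile clarification, but it does not change the route.
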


\begin{proof}
First calculate 
\begin{equation} \label{parallel_fs_2}
<\n_{E_1} E_1, E_3 >=\dfrac{1}{1+m^2} \{\G_{11}^3-m\G_{21}^3-m\G_{12}^3+m^2 \G_{22}^3\}.
\end{equation}
Next, use (\ref{point_m}) to reduce (\ref{parallel_fs_2}) to 
\begin{equation}
<\n_{E_1} E_1, E_3 >=\dfrac{1}{1+m^2}\{4m^2\G_{22}^3\}.
\end{equation}

By (\ref{X_is_geodesic_field}) and the fact that $m$ is nonzero, it follows that $\G_{22}^3=0$.
This, upon substitution into (\ref{point_m}), shows that
\begin{equation*} 
\G_{22}^3=0=\G_{12}^3=\G_{21}^3=\G_{11}^3.
\end{equation*} 

\end{proof}

\begin {lem} \label {parallel_lem}
Along the geodesic $\a(t)$, we have that
\begin{equation} \label {parallel_fs}
\n_{E_1} E_2=\n_{E_1} E_3=0
\end{equation}
and
\begin{equation} \label{vanish_diagonal}
<\n_{E_2} E_1, E_3>=0.
\end{equation}

\end{lem}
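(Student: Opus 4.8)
The plan is to reduce the three asserted identities to a single scalar equation that can be checked by a direct Christoffel symbol computation, using the relations of Proposition \ref{point} on $\mathcal{F}^+$ and Lemma \ref{integrability_lem} on $\mathcal{F}^-$.

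First I would exploit orthonormality of $\{E_1, E_2, E_3\}$ together with $\n_{E_1}E_1 = 0$ along $\a$, which is (\ref{X_is_geodesic_field}). Differentiating $<E_2, E_1> = <E_3, E_1> = 0$ and $<E_2, E_2> = <E_3, E_3> = 1$ in the direction $E_1$ gives $<\n_{E_1}E_2, E_1> = -<E_2, \n_{E_1}E_1> = 0$, and likewise $<\n_{E_1}E_3, E_1> = <\n_{E_1}E_2, E_2> = <\n_{E_1}E_3, E_3> = 0$, while $<\n_{E_1}E_2, E_3> = -<E_2, \n_{E_1}E_3>$. Hence $\n_{E_1}E_2 = <\n_{E_1}E_2, E_3>\,E_3$ and $\n_{E_1}E_3 = -<\n_{E_1}E_2, E_3>\,E_2$, so (\ref{parallel_fs}) is equivalent to the single equation $<\n_{E_1}E_2, E_3> = 0$, and (\ref{vanish_diagonal}) is the separate scalar equation $<\n_{E_2}E_1, E_3> = 0$.

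Next I would compute these two inner products directly from $E_1 = \frac{1}{\sqrt{1+m^2}}(e_1 - m e_2)$, $E_2 = \frac{1}{\sqrt{1+m^2}}(m e_1 + e_2)$, $E_3 = e_3$. Since differentiating the coefficient functions $m/\sqrt{1+m^2}$ and $1/\sqrt{1+m^2}$ contributes only $e_1$- and $e_2$-components, those terms disappear upon pairing with $E_3 = e_3$, leaving
\[
<\n_{E_1}E_2, E_3> = \tfrac{1}{1+m^2}\big(m\G_{11}^3 + \G_{12}^3 - m^2\G_{21}^3 - m\G_{22}^3\big), \qquad
<\n_{E_2}E_1, E_3> = \tfrac{1}{1+m^2}\big(m\G_{11}^3 + \G_{21}^3 - m^2\G_{12}^3 - m\G_{22}^3\big).
\]
Then I would split on $\mathcal{C} = \mathcal{F}^+ \cup \mathcal{F}^-$. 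At a point of $\mathcal{F}^+$, the relations $\G_{11}^3 = m\G_{12}^3 = m\G_{21}^3 = m^2\G_{22}^3$ from (\ref{point_p}) make both brackets collapse to $0$ (using $m\neq 0$). At a point of $\mathcal{F}^-$, in particular at $\a(t)$ whenever $\a$ enters $\mathcal{F}^-$, Lemma \ref{integrability_lem} gives $\G_{11}^3 = \G_{12}^3 = \G_{21}^3 = \G_{22}^3 = 0$, so both brackets vanish outright. Since every point of $\mathcal{C}$ lies in $\mathcal{F}^+$ or $\mathcal{F}^-$, we conclude $<\n_{E_1}E_2, E_3> = <\n_{E_2}E_1, E_3> = 0$ along $\a$, which with the first step yields (\ref{parallel_fs}) and (\ref{vanish_diagonal}).

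I do not expect a genuine obstacle: the argument is a short computation. The only point needing care is the case distinction — on $\mathcal{F}^+$ the vanishing comes from the algebraic identities (\ref{point_p}), while on $\mathcal{F}^-$ one must instead invoke (\ref{Integrability}), whose own derivation used $\n_{E_1}E_1 = 0$, so all three asserted identities ultimately rest on $\a$ being a geodesic with velocity field $E^+$.
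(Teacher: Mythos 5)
Your proposal is correct and takes essentially the same route as the paper: both reduce (\ref{parallel_fs}) via orthonormality and $\n_{E_1}E_1=0$ to a single scalar condition (the paper checks $\n_{E_1}E_3=0$, which is equivalent to your $<\n_{E_1}E_2,E_3>=0$), compute it and $<\n_{E_2}E_1,E_3>$ in Christoffel symbols of $\{e_1,e_2,e_3\}$, and conclude by the same case split, using (\ref{point_p}) on $\mathcal{F}^+$ and Lemma \ref{integrability_lem} on $\mathcal{F}^-$.
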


\begin{proof}
We first prove (\ref{parallel_fs}).
By  (\ref{X_is_geodesic_field}) and the fact that
$\{E_1, E_2, E_3\}$ is an orthonormal basis, it suffices to prove that $\n_{E_1} E_3=0$.

Straightforward computation shows that 
\begin {equation} \label{parallel_fs_1}
\n_{E_1} E_3=\dfrac{1}{1+m^2}\{(\G_{13}^1-m\G_{23}^1)e_1+(\G_{13}^2-m\G_{23}^2)e_2\}.
\end{equation}
If $\a(t)\in \mathcal{F}^+$, substitute (\ref{point_p}) into (\ref{parallel_fs_1}) to obtain
$\n_{E_1} E_3=0.$
If $\a(t) \in \mathcal{F}^-$, substitute (\ref{Integrability}) into (\ref{parallel_fs_1}) to obtain
$\n_{E_1} E_3=0.$

To prove (\ref{vanish_diagonal}), first calculate
\begin{equation}
<\n_{E_2} E_1, E_3>=\frac{1}{1+m^2}\{m\G_{11}^3-m^2\G_{12}^3+\G_{21}^3-m\G_{22}^3\}.
\end{equation}

The equation (\ref{vanish_diagonal}) now follows from (\ref{point_p}) when in $\a(t) \in \mathcal{F}^+$, and from (\ref{Integrability}) when $\a(t) \in \mathcal{F}^-$.

\end{proof}

\begin{lem}\label {evolution_o}
We have the following evolution equations along $\a(t)$:
\begin {equation*}
E_1<\n_{E_2} E_1, E_2>=-\e-<\n_{E_2} E_1, E_2>^2
\end{equation*}
and
\begin {equation*}
E_1<\n_{E_3}E_1, E_3>=-\e-<\n_{E_3} E_1, E_3>^2.
\end{equation*}

\end{lem}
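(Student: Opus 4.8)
The plan is to recognize these as Riccati equations coming from the fact that $\alpha$ is a geodesic with $\mathrm{sec}(\alpha'(t), E_2(t)) = \mathrm{sec}(\alpha'(t), E_3(t)) = \e$. Since $E_1 = E^+$ is isocurved along $\alpha$ by Proposition~\ref{tube}, every plane containing $\alpha' = E_1$ has curvature $\e$; in particular the planes spanned by $E_1, E_2$ and by $E_1, E_3$ do. By Lemma~\ref{parallel_lem}, $E_2$ and $E_3$ are parallel along $\alpha$. Thus $t \mapsto \langle \n_{E_2} E_1, E_2 \rangle$ and $t \mapsto \langle \n_{E_3} E_1, E_3 \rangle$ are exactly the shape-operator-type quantities whose derivatives along a geodesic are governed by the Riccati equation, with curvature term $\e$.

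The key steps, in order: First I would write $f(t) = \langle \n_{E_2} E_1, E_2 \rangle$ and compute $E_1 f = \langle \n_{E_1}\n_{E_2} E_1, E_2\rangle + \langle \n_{E_2} E_1, \n_{E_1} E_2\rangle$. The second term vanishes since $\n_{E_1} E_2 = 0$ along $\alpha$ by (\ref{parallel_fs}). For the first term, use the curvature tensor: $\n_{E_1}\n_{E_2} E_1 = \n_{E_2}\n_{E_1} E_1 + \n_{[E_1,E_2]} E_1 + R(E_1,E_2)E_1$. Now $\n_{E_1} E_1 = 0$ along $\alpha$ by (\ref{X_is_geodesic_field}), so the first term of this expansion drops. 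Writing $[E_1, E_2] = \n_{E_1} E_2 - \n_{E_2} E_1 = -\n_{E_2} E_1$ (using $\n_{E_1}E_2 = 0$ again), I get $\n_{[E_1,E_2]} E_1 = -\n_{\n_{E_2} E_1} E_1$. Pairing with $E_2$: the curvature term gives $\langle R(E_1,E_2)E_1, E_2\rangle = -\mathrm{sec}(E_1, E_2) = -\e$, and the bracket term gives $-\langle \n_{\n_{E_2} E_1} E_1, E_2\rangle$. Expanding $\n_{E_2} E_1 = \langle \n_{E_2}E_1, E_2\rangle E_2 + \langle \n_{E_2} E_1, E_3\rangle E_3$ (the $E_1$-component vanishes by metricity) and using (\ref{vanish_diagonal}), i.e. $\langle \n_{E_2} E_1, E_3\rangle = 0$, this reduces to $-f \langle \n_{E_2} E_1, E_2\rangle = -f^2$, since $\langle \n_{E_2}E_1, E_2\rangle = \langle \n_{E_2} E_2, \cdot\rangle$ manipulations show $\langle \n_f \cdot \rangle$ pairs correctly — more precisely $\langle \n_{f E_2} E_1, E_2\rangle = f \langle \n_{E_2} E_1, E_2\rangle = f^2$. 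Collecting: $E_1 f = -\e - f^2$.

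For the second equation I would run the identical computation with $E_3$ in place of $E_2$, using $\n_{E_1} E_3 = 0$ from (\ref{parallel_fs}), $\mathrm{sec}(E_1, E_3) = \e$ (again since $E_1$ is isocurved), and the vanishing $\langle \n_{E_3} E_1, E_2\rangle = 0$; note this last vanishing is the same as (\ref{vanish_diagonal}) up to the antisymmetry $\langle \n_{E_3} E_1, E_2 \rangle = -\langle \n_{E_3} E_2, E_1\rangle$ — I should double-check which diagonal Christoffel symbols I actually control, but $\langle \n_{E_2}E_1,E_3\rangle = 0$ from (\ref{vanish_diagonal}) together with $\langle \n_{E_1}E_2, E_3\rangle = 0$ (from $\n_{E_1}E_2 = 0$) forces $\langle [E_1,E_2], E_3\rangle$-type terms to behave, and a parallel argument handles the $E_3$ frame vector.

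The main obstacle I anticipate is bookkeeping the off-diagonal term $\langle \n_{E_2} E_1, E_3\rangle$ (and its analogue $\langle \n_{E_3} E_1, E_2\rangle$) so that the Riccati equation closes without cross-terms: the computation genuinely needs (\ref{vanish_diagonal}) from Lemma~\ref{parallel_lem}, and one must verify that the $E_3$-version of that vanishing also holds along $\alpha$ — it does, because in a Riemannian manifold $\langle \n_{E_3} E_1, E_2\rangle = -\langle \n_{E_3} E_2, E_1\rangle$ and the relevant combination of Christoffel symbols is controlled by (\ref{point_p}) on $\mathcal{F}^+$ and (\ref{Integrability}) on $\mathcal{F}^-$, exactly as in the proof of (\ref{vanish_diagonal}). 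Once that vanishing is in hand, both Riccati equations fall out by the standard geodesic-Riccati computation sketched above.
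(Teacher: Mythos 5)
Your computation of the first equation is exactly the paper's proof: expand $R(E_2,E_1)E_1$ along $\a$, kill $\n_{E_2}\n_{E_1}E_1$ with (\ref{X_is_geodesic_field}), identify $\langle\n_{E_1}\n_{E_2}E_1,E_2\rangle=E_1\langle\n_{E_2}E_1,E_2\rangle$ via (\ref{parallel_fs}), and reduce the bracket term to $\langle\n_{E_2}E_1,E_2\rangle^2$ using (\ref{vanish_diagonal}). So for that half, same approach, correct.

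Your treatment of the second equation contains one wrong claim, though it turns out to be harmless. You assert that the needed vanishing $\langle\n_{E_3}E_1,E_2\rangle=0$ follows pointwise ``exactly as in the proof of (\ref{vanish_diagonal}),'' from (\ref{point_p}) on $\mathcal{F}^+$ and (\ref{Integrability}) on $\mathcal{F}^-$. That is not so: a direct computation gives $\langle\n_{E_3}E_1,E_2\rangle=\G_{31}^2-\tfrac{e_3(m)}{1+m^2}$, which vanishes on $\mathcal{F}^+$ by the last line of (\ref{point_p}), but on $\mathcal{F}^-$ equation (\ref{Integrability}) only controls $\G_{11}^3,\G_{12}^3,\G_{21}^3,\G_{22}^3$, and (\ref{point_m}) turns the expression into $2\G_{31}^2$, which has no reason to vanish pointwise. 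In the paper this vanishing is precisely part (1) of Lemma \ref{evo_equal}, proved \emph{after} the present lemma by an ODE argument along $\a$ using the curvature identity (\ref{x}) and an initial condition at a point of $\mathcal{F}^+$ --- so quoting it here would be circular in spirit and is in any case not available by a pointwise Christoffel check. Fortunately you do not need it: running your own computation with $E_3$ in place of $E_2$, the only cross term that appears is $\langle\n_{E_3}E_1,E_2\rangle\,\langle\n_{E_2}E_1,E_3\rangle$, whose second factor already vanishes by (\ref{vanish_diagonal}); hence $E_1\langle\n_{E_3}E_1,E_3\rangle=-\e-\langle\n_{E_3}E_1,E_3\rangle^2$ closes with exactly the same inputs as the first equation, which is what the paper's ``analogous proof'' amounts to. With that correction your argument is complete and coincides with the paper's.
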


\begin{proof}
 
To prove the first equality, first use the fact that $E_1$ is isocurved and the fact that $\n_{E_1} E_1=0$ to show
\begin{equation} \label{evolution_o_1}
\e=R(E_2, E_1, E_1, E_2)
=<-\n_{E_1} \n_{E_2} E_1-\n_{[E_2, E_1]} E_1, E_2>.
\end{equation}
By (\ref{parallel_fs}),
\begin{equation} \label{evolution_1}
<\n_{E_1} \n_{E_2} E_1, E_2>=E_1<\n_{E_2} E_1, E_2>.
\end{equation}

Use (\ref{parallel_fs}) and (\ref{vanish_diagonal}) to show 
\begin{equation} \label {evolution_2}
\begin{split}
&<\n_{[E_2, E_1]} E_1, E_2>\\
=&<\n_{\n_{E_2} E_1-\n_{E_1}E_2} E_1, E_2>\\
=&<\n_{\n_{E_2} E_1}E_1, E_2>\\
=&<\n_{<\n_{E_2}E_1,E_2>E_2+<\n_{E_2} E_1, E_3>E_3} E_1 , E_2>\\
=&<\n_{E_2} E_1, E_2><\n_{E_2} E_1, E_2>+<\n_{E_2} E_1, E_3><\n_{E_3} E_1, E_2>\\
=&<\n_{E_2} E_1, E_2>^2.
\end{split}
\end{equation}
The first equality is now obtained by substituting (\ref{evolution_1}) and (\ref{evolution_2}) into (\ref{evolution_o_1}).

An analogous proof shows the second inequality.

\end{proof}

\begin{dfn}
Define an operator $A: E_1^{\perp}\to E_1^{\perp}$ by
$A=\n E_1$.
\end{dfn}

As $E_1$ has unit length, $A$ is well-defined. In terms of the orthonormal frame $\{E_2, E_3\}$ for $E_1^{\perp}$,
\begin{equation*}
tr(A)=<\n_{E_2} E_1, E_2>+<\n_{E_3} E_1, E_3>.
\end{equation*} 

In Lemma \ref{evo_equal}, we show that 
$A$ is given by scalar multiplication along $\a$.

\begin{lem}\label{evo_equal}
Along the geodesic $\a(t)$, we have that
\begin{enumerate}

\item $<\n_{E_2} E_1, E_3>=<\n_{E_3} E_1, E_2>=0$

\item $<\n_{E_2} E_1, E_2>=<\n_{E_3}E_1, E_3>.$

\end{enumerate}  
\end{lem}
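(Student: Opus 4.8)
The plan is to prove both assertions simultaneously by exploiting the curvature identity
$R(E_1,E_2,E_3,E_1)=0$ from Lemma \ref{x_lemma} together with the structure equations already established along $\a$. First I would use $\n_{E_1}E_1=0$ (equation (\ref{X_is_geodesic_field})) and the parallelism relations $\n_{E_1}E_2=\n_{E_1}E_3=0$ from Lemma \ref{parallel_lem} to compute $R(E_1,E_2,E_3,E_1)$ in terms of Christoffel-type quantities of the $\{E_1,E_2,E_3\}$ frame. Since $\n_{E_1}$ annihilates the whole frame along $\a$, the curvature operator $R(E_1,\cdot)\cdot$ simplifies dramatically: writing $R(E_1,Y,Z,E_1)=-\langle \n_{[E_1,Y]}Z,E_1\rangle-\langle \n_{E_1}\n_Y Z-\n_Y\n_{E_1}Z,E_1\rangle$ and using $\n_{E_1}(\text{frame})=0$ along $\a$, one reduces everything to brackets $[E_1,Y]$, which in turn are expressed via $\n_{E_1}Y-\n_Y E_1 = -\n_Y E_1 = -AY$. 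This should turn $R(E_1,E_2,E_3,E_1)=0$ into a relation purely among the entries of $A$, namely the off-diagonal symmetry/antisymmetry combined with (\ref{vanish_diagonal}).

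Concretely, I expect the computation to yield that $R(E_1,E_2,E_3,E_1)$ equals (up to sign and an $E_1$-derivative term that vanishes by the evolution setup) something like $\langle \n_{E_2}E_1,E_2\rangle\langle \n_{E_3}E_1,E_2\rangle + \langle\n_{E_2}E_1,E_3\rangle\langle\n_{E_3}E_1,E_3\rangle$ minus the analogous term with the roles of the second slot swapped — i.e. a bilinear expression in the matrix entries of $A$ with respect to $\{E_2,E_3\}$. Feeding in (\ref{vanish_diagonal}), which says $\langle \n_{E_2}E_1,E_3\rangle=0$, this should collapse to a single product of entries, forcing either $\langle\n_{E_3}E_1,E_2\rangle=0$ or a relation between the diagonal entries. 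To pin down part (1) fully I would also invoke the standard symmetry $\langle\n_{E_2}E_1,E_3\rangle+\langle\n_{E_3}E_1,E_2\rangle = E_2\langle E_1,E_3\rangle - \langle E_1,\n_{E_2}E_3\rangle + \cdots$; more cleanly, since $A=\n E_1$ need not be symmetric, I would instead use the torsion-free identity $\n_{E_2}E_3-\n_{E_3}E_2=[E_2,E_3]$ together with $\langle[E_2,E_3],E_1\rangle$, which relates $\langle\n_{E_2}E_1,E_3\rangle-\langle\n_{E_3}E_1,E_2\rangle$ to $-\langle[E_2,E_3],E_1\rangle$, and combine this with the curvature vanishing to get both off-diagonal terms to be zero. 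Once part (1) holds, part (2) follows by substituting $\langle\n_{E_2}E_1,E_3\rangle=\langle\n_{E_3}E_1,E_2\rangle=0$ back into the curvature identity, leaving exactly $(\langle\n_{E_2}E_1,E_2\rangle-\langle\n_{E_3}E_1,E_3\rangle)\cdot(\text{nonzero factor})=0$ — the nonzero factor being guaranteed by the fact that $\{E_1,E_2,E_3\}$ is \emph{not} Ricci-diagonalizing, so the relevant curvature component in Lemma \ref{x_lemma} is nonzero.

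The main obstacle I anticipate is organizing the curvature computation so that all the terms involving derivatives of frame entries in directions other than $E_1$ either cancel or get absorbed; in particular one must be careful that the identity we are using is $R(E_1,E_2,E_3,E_1)=0$ at points of $\a$, and the quantities $\n_{E_1}E_i=0$ only hold \emph{along} $\a$, so one cannot differentiate them in transverse directions. The right way around this is to expand $R$ using only $E_1$-derivatives and brackets, never transverse covariant derivatives of $E_1$-parallel frames. A secondary technical point is justifying that the scalar coefficient multiplying $(\langle\n_{E_2}E_1,E_2\rangle-\langle\n_{E_3}E_1,E_3\rangle)$ in the final relation is genuinely nonzero: this is where Lemma \ref{x_lemma}'s assertion that at least one of $R(E_1,E_2,E_2,E_3)$ or $R(E_1,E_3,E_3,E_2)$ is nonzero enters, and I would track carefully which of these mixed components appears as that coefficient. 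Modulo this bookkeeping, both statements drop out of a single application of the isocurvature condition plus the already-established parallel transport facts for $\{E_1,E_2,E_3\}$ along $\a$.
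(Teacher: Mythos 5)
Your plan treats the identity $R(E_1,E_2,E_3,E_1)=0$ as if, after using the established parallelism along $\a$, it reduced to a \emph{pointwise algebraic} relation among the entries of $A=\n E_1$; the parenthetical ``up to \dots an $E_1$-derivative term that vanishes by the evolution setup'' is exactly where this fails. Expanding as the paper does (equivalently, via the Riccati equation for the geodesic field $E_1$), the vanishing of $R(E_1,E_3,E_2,E_1)$, combined with $<\n_{E_2}E_1,E_3>=0$ from (\ref{vanish_diagonal}), yields $E_1<\n_{E_3}E_2,E_1>=-tr(A)<\n_{E_3}E_2,E_1>$: the $E_1$-derivative is the main term, not an ignorable one, and the identity is a linear first-order ODE along $\a$, not an algebraic constraint. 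No pointwise manipulation of it forces $<\n_{E_3}E_1,E_2>=0$; one needs an initial condition, and this is the ingredient missing from your proposal. The paper supplies it by computing $<\n_{E_3}E_2,E_1>=\frac{e_3(m)}{1+m^2}-\G_{31}^2$ and invoking the last equation of (\ref{point_p}) at a point of $\mathcal{F}^+$ (the geodesics of Proposition \ref{tube} start at such points), after which uniqueness of ODE solutions propagates the vanishing along $\a$. Your fallback via torsion-freeness only relates $<\n_{E_2}E_1,E_3>-<\n_{E_3}E_1,E_2>$ to $<[E_2,E_3],E_1>$, which is not known to vanish a priori, so it does not close this gap.

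Part (2) has the same defect plus an incorrect structural expectation: when $R(E_1,E_2,E_3,E_1)$ is expanded in terms of $A$, the difference $<\n_{E_2}E_1,E_2>-<\n_{E_3}E_1,E_3>$ never appears multiplied by a curvature factor, and the mixed components $R(E_1,E_2,E_2,E_3)$ and $R(E_1,E_3,E_3,E_2)$ of Lemma \ref{x_lemma} are independent curvature components that do not enter this expansion at all (their nonvanishing is only used later, in Section 7). The paper proves (2) by a different mechanism: $<\n_{E_2}E_1,E_2>$ and $<\n_{E_3}E_1,E_3>$ satisfy the \emph{same} Riccati equation $E_1f=-\e-f^2$ (Lemma \ref{evolution_o}), so it suffices to verify equality at a single point, which is done at a point of $\mathcal{F}^+$ by substituting the second and third equations of (\ref{point_p}) into the explicit Christoffel-symbol expressions for these two quantities. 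In short, both parts rest on ODE uniqueness along $\a$ anchored by the pointwise relations (\ref{point_p}) at the starting point; a single application of the curvature identity, as you propose, cannot deliver either conclusion.
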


\begin {proof}
To prove (1), it suffices to prove $<\n_{E_3} E_1, E_2>=0$, by Lemma \ref{parallel_lem}. 
To prove that $<\n_{E_3} E_1, E_2>=0$, first use (\ref{X_is_geodesic_field}) and (\ref{parallel_fs}) to show that
\begin{equation} \label{evo_equal_2}
\begin{split} 
&R(E_1, E_3, E_2, E_1)\\&=E_1< \n_{E_3} E_2,E_1> - <\n_{[E_1, E_3]} E_2, E_1>\\
&=E_1<\n_{E_3} E_2, E_1>+<\n_{<\n_{E_3} E_1, E_2>E_2+<\n_{E_3}E_1 ,E_3>E_3}E_2, E_1>\\
&=E_1<\n_{E_3} E_2, E_1>+tr(A)<\n_{E_3}E_2, E_1>
\end{split}
\end{equation}
By (\ref{x}) and (\ref{evo_equal_2}),
\begin{equation*} 
E_1<\n_{E_3} E_2, E_1>=-tr(A)<\n_{E_3}E_2, E_1>.
\end{equation*}
Hence if $<\n_{E_3} E_2, E_1>=0$ at $\a(0)$, then $<\n_{E_3} E_2, E_1>=0$ along $\a$ by the uniqueness of solutions to ordinary differential equations.

Direct calculations show that
\begin{equation*}
<\n_{E_3} E_2, E_1>=\frac{e_3(m)}{1+m^2}-\G_{31}^2.
\end{equation*}
Hence, at points in $\mathcal{F}^+$,  the last equality in (\ref{point_p}) implies that $<\n_{E_3}E_2, E_1>=0$, concluding the proof of (1).

To prove (2), recall from Lemma \ref{evolution_o} that $<\n_{E_3}E_1, E_3>$ and $<\n_{E_2} E_1, E_2>$ satisfy the same first order differential
equation, hence it suffices to prove (2) at a point along $\a(t)$.
Direct calculations show that 
\begin{equation*}
<\n_{E_3}E_1, E_3>=\frac{1}{\sqrt{1+m^2}}\G_{31}^3-\frac{m}{\sqrt{1+m^2}}\G_{32}^3,
\end{equation*}
and
\begin {equation} \label{evo_equal_3}
\begin{split}
<\n_{E_2} E_1, E_2>=&\frac{1}{\sqrt{1+m^2}}\frac{1}{1+m^2}(-me_1(m)-e_2(m))\\
-&\frac{m}{1+m^2}\{\frac{m^2}{\sqrt{1+m^2}}\G_{12}^1+\frac{m}{\sqrt{1+m^2}}\G_{22}^1\}\\
+&\frac{1}{1+m^2}\{\frac{m}{\sqrt{1+m^2}}\G_{11}^2+\frac{1}{\sqrt{1+m^2}}\G_{21}^2\}.
\end{split}
\end{equation}

At points in $\mathcal{F}^+$, equality holds as can be seen after substituting the second and third equalities in $(\ref{point_p})$
into (\ref{evo_equal_3}).

\end{proof}

We are now ready to prove the evolution equations along the geodesic $\a$. 
\begin {prop} \label{evo_tr}
Along the geodesic $\a$,
$$E_1(tr(A))=-2\e-\frac{1}{2}(tr(A))^2$$
\begin {equation} \label{evo_off}
\begin{split} 
E_1(R(E_1, E_2, E_2, E_3))&=-\frac{3}{2} tr(A)R(E_1, E_2, E_2, E_3)\\
E_1(R(E_1, E_3, E_3, E_2))&=-\frac{3}{2} tr(A)R(E_1, E_3, E_3, E_2).
\end{split}
\end{equation}

\end{prop}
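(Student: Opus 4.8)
The plan is to compute $E_1(\mathrm{tr}(A))$ and the $E_1$-derivatives of the two curvature components directly, using the structural lemmas already established along $\alpha$. For the first equation, I would start from the two evolution equations in Lemma \ref{evolution_o}, namely
$$E_1\langle\n_{E_2}E_1,E_2\rangle=-\e-\langle\n_{E_2}E_1,E_2\rangle^2,\qquad E_1\langle\n_{E_3}E_1,E_3\rangle=-\e-\langle\n_{E_3}E_1,E_3\rangle^2,$$
add them, and invoke Lemma \ref{evo_equal}(2), which says $\langle\n_{E_2}E_1,E_2\rangle=\langle\n_{E_3}E_1,E_3\rangle=\tfrac12\mathrm{tr}(A)$ along $\alpha$. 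Substituting $\langle\n_{E_2}E_1,E_2\rangle=\langle\n_{E_3}E_1,E_3\rangle=\tfrac12\mathrm{tr}(A)$ into the sum gives $E_1(\mathrm{tr}(A))=-2\e-2\cdot\tfrac14(\mathrm{tr}(A))^2=-2\e-\tfrac12(\mathrm{tr}(A))^2$, as claimed.

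For the off-diagonal curvature evolution equations, the approach is a second Bianchi / direct differentiation argument. I would express $E_1\big(R(E_1,E_2,E_2,E_3)\big)$ using the fact that $\n_{E_1}E_1=\n_{E_1}E_2=\n_{E_1}E_3=0$ along $\alpha$ (from \eqref{X_is_geodesic_field} and \eqref{parallel_fs}): since all four frame vectors appearing are $\n_{E_1}$-parallel along $\alpha$, one has $E_1\big(R(E_1,E_2,E_2,E_3)\big)=(\n_{E_1}R)(E_1,E_2,E_2,E_3)$. Then I would apply the second Bianchi identity
$$(\n_{E_1}R)(E_1,E_2,E_2,E_3)+(\n_{E_2}R)(E_?,\dots)+(\n_{?}R)(\dots)=0$$
to trade the $\n_{E_1}$-derivative for $\n_{E_2}$- and $\n_{E_3}$-derivatives of curvature components that are either zero (by Lemma \ref{x_lemma}, $R(E_1,E_2,E_3,E_1)=0$) or that, after expanding the covariant derivatives in terms of the connection coefficients $\langle\n_{E_i}E_j,E_k\rangle$, reduce via Lemma \ref{evo_equal}(1) ($\langle\n_{E_2}E_1,E_3\rangle=\langle\n_{E_3}E_1,E_2\rangle=0$) and Lemma \ref{evo_equal}(2) to multiples of $R(E_1,E_2,E_2,E_3)$ and $R(E_1,E_3,E_3,E_2)$ with coefficient $-\tfrac32\mathrm{tr}(A)$. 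The analogous computation with the roles of $E_2$ and $E_3$ interchanged gives the second equation.

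The main obstacle is the bookkeeping in the second part: one must carefully expand $(\n_{E_j}R)$ on a frame that is \emph{not} parallel off of $\alpha$ (only $\n_{E_1}$ of the frame vanishes along $\alpha$), so terms like $R(\n_{E_2}E_1,E_2,E_2,E_3)$, $R(E_1,\n_{E_2}E_2,E_2,E_3)$, etc., appear and must be rewritten using the vanishing connection coefficients from Lemma \ref{evo_equal}(1) and the identity $\langle\n_{E_2}E_1,E_2\rangle=\langle\n_{E_3}E_1,E_3\rangle=\tfrac12\mathrm{tr}(A)$, together with the Ricci-diagonalization relations \eqref{Ricci_diag} (which in the $\{E_1,E_2,E_3\}$ frame force most curvature components to vanish by Lemma \ref{x_lemma}) to see that only $R(E_1,E_2,E_2,E_3)$ and $R(E_1,E_3,E_3,E_2)$ survive. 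The factor $\tfrac32$ should emerge as $1$ (from the term where $\n_{E_1}$ hits a metrically-normalized frame vector, contributing $\tfrac12\mathrm{tr}(A)$ twice) plus $\tfrac12$ (from one more such term), but verifying the precise constant requires writing out all the Leibniz terms; I would organize this by first listing which curvature components vanish on $\{E_1,E_2,E_3\}$, then reducing. Once the constant is confirmed, uniqueness of solutions to these linear ODEs is immediate and no further argument is needed.
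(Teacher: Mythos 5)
Your proposal is correct and follows essentially the same route as the paper: the first equation comes from summing the two ODEs of Lemma \ref{evolution_o} and using Lemma \ref{evo_equal}, and the off-diagonal equations come from second Bianchi identities with the last two slots fixed, simplified along $\a$ via $\n_{E_1}E_1=\n_{E_1}E_2=\n_{E_1}E_3=0$, $<\n_{E_2}E_1,E_3>=<\n_{E_3}E_1,E_2>=0$, $<\n_{E_2}E_1,E_2>=<\n_{E_3}E_1,E_3>=\tfrac12 tr(A)$, and $R(E_1,E_2,E_3,E_1)=0$. One small correction to your bookkeeping plan: $\{E_1,E_2,E_3\}$ is not Ricci diagonalizing, so (\ref{Ricci_diag}) does not make curvature components vanish in this frame; what controls the $\n_{E_2}$- and $\n_{E_3}$-Bianchi terms is that $R(E_1,E_2,E_2,E_1)=R(E_1,E_3,E_3,E_1)=\e$ and $R(E_1,E_2,E_3,E_1)=0$ hold identically on $\mathcal{C}$ (isocurvedness of $E_1$ together with Lemma \ref{x_lemma}), so their directional derivatives vanish, and the factor $\tfrac32$ then arises from three surviving connection terms (two from the $\n_{E_3}$ term, one from the $\n_{E_2}$ term), each contributing $\tfrac12 tr(A)$, rather than from $\n_{E_1}$ hitting frame vectors.
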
 

\begin {proof}
By Lemma \ref{evolution_o} and Lemma \ref{evo_equal},

\begin {equation*}
\begin {split}
E_1(tr(A))&=E_1(<\n_{E_2} E_1, E_2>+<\n_{E_3} E_1, E_3>)\\
&=-2\e-<\n_{E_2} E_1, E_2>^2-<\n_{E_3} E_1, E_3>^2\\
&=-2\e-\frac{1}{2}(tr(A))^2.
\end{split}
\end{equation*}

The first equality in (\ref{evo_off}) is obtained by expanding the second Bianchi identity
\begin{equation*}
(\n_{E_3} R)(E_1,E_2,E_2,E_1)+ (\n_{E_1} R)(E_2,E_3,E_2,E_1)+(\n_{E_2} R)(E_3,E_1,E_2,E_1)=0,
\end{equation*}
and then making simplifications using the Christoffel symbols relations
$$\n_{E_1} E_1= \n_{E_1} E_2 = \n_{E_1} E_3=0$$
$$<\n_{E_2} E_1, E_3>=<\n_{E_3} E_1, E_2>=0$$
$$<\n_{E_2} E_1, E_2> = <\n_{E_3} E_1, E_3>$$
obtained in Lemma \ref{parallel_lem} and Lemma \ref{evo_equal}.

The second equality in (\ref{evo_off}) is similarly obtained starting from the second Bianchi identity

\begin{equation*}
(\n_{E_3} R)(E_1,E_2,E_3,E_1)+ (\n_{E_1} R)(E_2,E_3,E_3,E_1)+(\n_{E_2} R)(E_3,E_1,E_3,E_1)=0.
\end{equation*}

\end{proof}


\section {Evolution Equations Along Isocurved Geodesics in $\mathcal{E}$} 

In this section, we assume that $\mathcal{O}$ is empty and that $\mathcal{E}$ is nonempty. Let $\mathcal{C}$ be a connected component of 
$\mathcal{E}$. 
By Proposition \ref{extreme_structure}, on $\mathcal{C}$ or a double cover of $\mathcal{C}$ (still denoted by $\mathcal{C}$), there exists a complete isocurved geodesic field $E$. By the definition of $\mathcal{E}_+$ and  $\mathcal{E}_-$, $\mathcal{C}\subset \mathcal{E}_-$ or 
$\mathcal{C}\subset \mathcal{E}_+$.

As the rank assumption passes to covers, we may apply Propositions \ref{point_e} and \ref {point_e_2} to derive evolution equations along $E$ geodesics on $\mathcal{C}$. 
The evolution equations are then applied to prove rigidity results for three-manifolds of higher rank with extremal curvature. These results will be used in the proof of the main theorems in the last section.

\begin{dfn}
Let $E$ be the isocurved geodesic field on $\mathcal{C}$. Define $\tilde{A}: E^{\perp}\to E^{\perp}$ by $\tilde{A}=\n E$. 
\end{dfn}

The trace of $\tilde{A}$ is clearly well-defined and smooth on $\mathcal{C}$. 

\begin {prop} \label{evo_extreme}
On $\mathcal{C}$, we have the following evolution equation for $tr (\tilde{A})$ along the $E$ geodesics:
$$E(tr \tilde{A})=-2\e-\frac{1}{2}(tr\tilde{A})^2.$$
\end {prop}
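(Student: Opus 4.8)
The plan is to mirror, in the extremal setting, the argument that produced the trace evolution equation in Proposition \ref{evo_tr}, using $E$ in place of $E_1$. Since $E$ is a complete isocurved geodesic field on $\mathcal{C}$, we have $\n_E E = 0$, and every plane containing $E$ has curvature $\e$. The first task is to complete $E$ to a convenient orthonormal frame $\{E, E_2, E_3\}$ on $\mathcal{C}$ and record the relevant Christoffel symbol vanishings. When $\mathcal{C}\subset \mathcal{E}_-$, the natural choice is $E = e_1$ (the isocurved direction by Corollary \ref{isovector}(1)), $E_2 = e_2$, $E_3 = e_3$, and then Proposition \ref{point_e} gives directly $\G_{11}^2=\G_{11}^3=\G_{21}^3=\G_{31}^2=0$ together with $\G_{33}^1=\G_{22}^1$; the case $\mathcal{C}\subset \mathcal{E}_+$ is identical after swapping indices $1\leftrightarrow 2$ and invoking Proposition \ref{point_e_2}. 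The point of these relations is to show that $\n_E E_2 = \n_E E_3 = 0$ and $\langle \n_{E_2} E, E_3\rangle = \langle\n_{E_3}E, E_2\rangle = 0$ along $E$-geodesics, exactly the structural facts that Lemma \ref{parallel_lem} supplied in the generic case.

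Next, I would establish the analogue of Lemma \ref{evolution_o}: since $E$ is isocurved and $\n_E E = 0$, for each unit vector $W \perp E$ the Jacobi-type computation
\begin{equation*}
\e = R(W, E, E, W) = \langle -\n_E \n_W E - \n_{[W,E]}E, W\rangle
\end{equation*}
together with $\n_E W$ lying in the span worked out from the vanishing Christoffel symbols yields
\begin{equation*}
E\langle \n_{E_2}E, E_2\rangle = -\e - \langle\n_{E_2}E, E_2\rangle^2, \qquad E\langle\n_{E_3}E, E_3\rangle = -\e - \langle\n_{E_3}E,E_3\rangle^2,
\end{equation*}
the off-diagonal cross terms dropping out because $\langle\n_{E_2}E,E_3\rangle = \langle\n_{E_3}E,E_2\rangle = 0$. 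Then one needs the analogue of Lemma \ref{evo_equal}(2), namely $\langle\n_{E_2}E,E_2\rangle = \langle\n_{E_3}E,E_3\rangle$ along each $E$-geodesic. Here I expect to use two ingredients: first, both quantities satisfy the same Riccati ODE above, so it suffices to check equality at one point of each geodesic; second, the relation $\G_{33}^1 = \G_{22}^1$ (resp. $\G_{33}^2 = \G_{11}^2$) from Proposition \ref{point_e} (resp. \ref{point_e_2}) is precisely the pointwise statement that $\langle\n_{E_2}E,E_2\rangle = \langle\n_{E_3}E,E_3\rangle$, since in this frame $\langle\n_{e_i}e_j,e_k\rangle$ is literally $\G_{ij}^k$. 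With this in hand, writing $\tilde{A} = \n E$ and $tr(\tilde A) = \langle\n_{E_2}E,E_2\rangle + \langle\n_{E_3}E,E_3\rangle$, adding the two Riccati equations and using $\langle\n_{E_2}E,E_2\rangle^2 + \langle\n_{E_3}E,E_3\rangle^2 = \tfrac12 (tr\tilde A)^2$ gives $E(tr\tilde A) = -2\e - \tfrac12(tr\tilde A)^2$, as desired.

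The main obstacle, and the step deserving the most care, is verifying that the Christoffel relations in Propositions \ref{point_e} and \ref{point_e_2} do in fact translate into the curvature-free identities $\n_E E_2 = \n_E E_3 = 0$, $\langle\n_{E_2}E,E_3\rangle = 0$, and the diagonal equality $\langle\n_{E_2}E,E_2\rangle = \langle\n_{E_3}E,E_3\rangle$ globally along $E$-geodesics (not just at interior points of $\mathcal{E}$). Since here $\mathcal{O} = \emptyset$, one has $M = \mathcal{E}\cup\mathcal{I}$, and on $\mathcal{I}$ the curvature is constant $\e$; I would argue that the relevant expressions are continuous and that the interior of $\mathcal{C}$ is dense in $\mathcal{C}$ (or handle the isotropic locus separately, where all these quantities are unconstrained but the evolution equation still closes because the frame can be chosen so that the needed covariant derivatives vanish). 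A subtlety worth flagging is whether $\langle\n_{E_2}E,E_3\rangle = \langle\n_{E_3}E,E_2\rangle$ — in the generic case Lemma \ref{parallel_lem} gave this for free from an integrability-type identity, and here it follows either from the vanishing of $\G_{31}^2$ (resp. $\G_{32}^1$) in Proposition \ref{point_e}/\ref{point_e_2} or from the skew-symmetry of $\n E$ composed with the fact that $E$ is a geodesic field. Everything else is a routine transcription of Section 5's computations with the simplification $m = 0$ (equivalently, $E^+ = E^-$) already built into the extremal case.
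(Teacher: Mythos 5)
Your overall route is the paper's: the ingredients are exactly the Christoffel relations of Proposition \ref{point_e} (resp.\ Proposition \ref{point_e_2}), the isocurvedness of $E$, a Riccati-type curvature computation along the $E$-geodesics, and the diagonal equality $\G_{33}^1=\G_{22}^1$ used to close the trace equation. The paper is simply more economical: it computes $R(e_1,e_3,e_3,e_1)=e_1(\G_{33}^1)-(\G_{33}^1)^2$ directly in the Ricci-diagonalizing frame, sets this equal to $\e$ by isocurvedness of $e_1$, and then doubles via $\G_{33}^1=\G_{22}^1$, whereas you transcribe Lemmas \ref{parallel_lem}--\ref{evo_equal} with $m=0$.

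One intermediate claim is not correct as stated: Proposition \ref{point_e} does \emph{not} give $\n_E E_2=\n_E E_3=0$. Taking $E=e_1$, $E_2=e_2$, $E_3=e_3$ on $\mathcal{E}_-$, one has $\n_{e_1}e_2=\G_{12}^3\,e_3$ and $\n_{e_1}e_3=-\G_{12}^3\,e_2$, and $\G_{12}^3$ is not among the symbols controlled by Proposition \ref{point_e}; indeed on $\mathcal{E}_-$ the vectors $e_2,e_3$ span a repeated Ricci eigenplane, so the frame may rotate about $e_1$ and this symbol is genuinely unconstrained. This is repairable rather than fatal: in the Riccati computation the only terms involving $\n_{e_1}e_2$ or $\n_{e_1}e_3$ occur paired with $\langle\n_{e_2}e_1,e_3\rangle=\G_{21}^3=0$ or $\langle\n_{e_3}e_1,e_2\rangle=\G_{31}^2=0$, so the correction terms vanish and both diagonal Riccati equations (hence the trace equation, using $\G_{33}^1=\G_{22}^1$ and $tr(\tilde A^2)=\tfrac12(tr\tilde A)^2$ for a scalar $\tilde A$) still close; alternatively, rotate $e_2,e_3$ within the eigenplane to be parallel along $E$, which preserves Ricci-diagonalization, the ordering (\ref{order}), and hence the hypotheses of Proposition \ref{point_e}. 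Finally, the subtlety you flag about interior points is moot here: in this section $\mathcal{O}=\emptyset$, so $\mathcal{E}=M\setminus\mathcal{I}$ is open, every point of $\mathcal{C}$ is interior, and Propositions \ref{point_e}, \ref{point_e_2} apply along the entire $E$-geodesic without any density or boundary argument.
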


\begin {proof}
We prove the case when $ \mathcal{C}\subset \mathcal{E}_-$. The proof for the case when $\mathcal{C} \subset \mathcal{E}_+$ is similar.
Let $\{e_1, e_2, e_3\}$ be a Ricci diagonalizing orthonormal frame on $\mathcal{E}_-$ satisfying (\ref{order}). To prove the proposition, first use Proposition \ref{point_e} to obtain
\begin{equation*}
R(e_1, e_3, e_3, e_1)=e_1(\G_{33}^1)-(\G_{33}^1)^2.
\end{equation*}
By Corollary \ref{isovector}, $e_1$ is isocurved, so that $R(e_1, e_3, e_3, e_1)=\e.$
Therefore,
\begin{equation} \label{evo_extreme_1}
e_1(\G_{31}^3)=-\e-(\G_{31}^3)^2.
\end{equation}

Note that on $\mathcal{E}_-$, we may assume that $E=e_1$. 
By Proposition \ref{point_e}, we have that on $\mathcal{E}_-$,
\begin{equation} \label{evo_extreme_2}
\G_{33}^1=\G_{22}^1.
\end{equation}
Hence by (\ref{evo_extreme_1}) and (\ref{evo_extreme_2}),
\begin{equation*}
\begin{split}
E (tr \tilde{A})=e_1(tr\tilde{A})&=e_1(\G_{31}^3+\G_{21}^2)\\
&=-2\e-\frac{1}{2}(tr\tilde{A})^2
\end{split}
\end{equation*}

\end{proof}

The following proposition is a special case for the characterization of three-manifolds of higher spherical rank. 
\begin{prop} \label{ext}
Let $M$ be a complete Riemannian three-manifold of higher spherical rank and extremal sectional curvature 1, then
M has constant sectional curvatures.
\end{prop}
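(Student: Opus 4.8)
The plan is to show that if $M$ is a complete three-manifold of higher spherical rank with extremal curvature $\e = 1$, then $\mathcal{E} = \emptyset$ as well, so that $M = \mathcal{I}$ and constant curvature $1$ follows immediately. Since extremal curvature means $\mathcal{O} = \emptyset$, we have $M = \mathcal{I} \cup \mathcal{E}$, and it remains to rule out $\mathcal{E}$. So suppose toward a contradiction that $\mathcal{E} \neq \emptyset$, and let $\mathcal{C}$ be a connected component of $\mathcal{E}$. Passing to a double cover if necessary, Proposition \ref{extreme_structure} gives a complete isocurved geodesic field $E$ on $\mathcal{C}$, and we may assume $\mathcal{C} \subset \mathcal{E}_-$ or $\mathcal{C} \subset \mathcal{E}_+$; say $\mathcal{C} \subset \mathcal{E}_-$, so $E = e_1$ in a Ricci-diagonalizing frame.

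The key tool is the evolution equation from Proposition \ref{evo_extreme}: along an $E$-geodesic, with $f = \operatorname{tr}\tilde A$, we have $f' = -2\e - \tfrac12 f^2 = -2 - \tfrac12 f^2$. The right-hand side is strictly negative and bounded above by $-2$, so $f$ is strictly decreasing with $f' \le -2$. Hence along any complete $E$-geodesic, $f(t) \le f(0) - 2t \to -\infty$ as $t \to +\infty$; in fact $f$ must blow up to $-\infty$ in finite forward time (comparing with the Riccati equation $g' = -\tfrac12 g^2$, whose solutions escape to $-\infty$ in finite time once $g < 0$, and here $f' \le -\tfrac12 f^2$ forces this even faster once $f$ is negative, which it becomes immediately since $f' \le -2 < 0$). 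But $f = \operatorname{tr}\tilde A$ is a smooth function on the complete manifold $\mathcal{C}$, hence finite along the entire (complete) geodesic — contradiction. Therefore $\mathcal{C}$ cannot exist, so $\mathcal{E} = \emptyset$, $M = \mathcal{I}$, and $M$ has constant sectional curvature $1$.

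The main subtlety to get right is the finite-time blow-up argument: one must check that the geodesic field $E$ is genuinely complete (this is exactly the content of Proposition \ref{extreme_structure}, which provides a \emph{complete} geodesic field) so that the solution $f(t)$ of the Riccati-type equation is defined for all $t \in \mathbb{R}$, and then observe that no solution of $f' = -2 - \tfrac12 f^2$ exists globally on $\mathbb{R}$ — indeed, separating variables, $f(t) = -2\tan\!\big(\tfrac{t - t_0}{2}\big)$ up to translation, which has vertical asymptotes and is \emph{not} defined on all of $\mathbb{R}$. This contradiction with the smoothness and completeness is the crux. The case $\mathcal{C} \subset \mathcal{E}_+$ is handled identically after permuting indices $1$ and $2$, using Proposition \ref{point_e_2} in place of Proposition \ref{point_e} (Proposition \ref{evo_extreme} already covers both cases). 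No curvature bound is needed beyond the extremal hypothesis, which is what makes the equation $f' = -2 - \tfrac12 f^2$ available with the favorable sign $\e = 1 > 0$.
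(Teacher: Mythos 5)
Your argument is correct and essentially the same as the paper's: both reduce to showing $\mathcal{E}=\emptyset$ by integrating the complete isocurved geodesic field of Proposition \ref{extreme_structure} and deriving a contradiction between the finite-time singularity of the Riccati equation $f'=-2-\tfrac12 f^2$ from Proposition \ref{evo_extreme} and the smoothness of $\operatorname{tr}\tilde{A}$ along the complete geodesic. (One trivial slip: the explicit solution is $f(t)=-2\tan(t-t_0)$, not $-2\tan\bigl((t-t_0)/2\bigr)$, but the blow-up conclusion is unaffected.)
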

\begin {proof}
Let $M$ be a three manifold of higher spherical rank and extremal curvature 1. 
Then $M=\mathcal{E}\cup \mathcal{I}$. To prove the rigidity result, it suffices to show that $\mathcal{E}$ is empty.

Suppose that $\mathcal{E}$ is nonempty. Let $\a:(-\infty, \infty)\to \mathcal{C}$ be a complete isocurved geodesic obtained by
integrating the complete geodesic field $E$ on $\mathcal{C}$. 
By Proposition \ref{evo_extreme},
\begin{equation} \label{ext_1}
\frac{d}{dt} (tr \tilde{A})=-2-\frac{1}{2}(tr\tilde{A})^2
\end{equation}
along $\a(t)$. 
The solution to the differential equation (\ref{ext_1}) has a finite time singularity. This contradicts with the fact that $tr \tilde{A}$ is a smooth function along $\a(t)$.
Hence $\mathcal{E}$ is empty, and $M$ has constant sectional curvature 1.

\end{proof}

The evolution equations also lead to a proof of Theorem \ref{cvc}. 

\begin{proof}
Let $M$ be a complete Riemannian three-manifold of higher hyperbolic rank with extremal curvature and finite volume. 

Suppose that $\mathcal{E}$ is nonempty. Let $\mathcal{C}$ be a connected component of $\mathcal{E}$. Then $\mathcal{C}$ and its double cover have finite volume. 
By Proposition \ref{evo_extreme}, we have the following evolution equation along each isocurved $E$ geodesic:
\begin{equation} \label{ext_h_1}
\frac{d}{dt}(tr \tilde{A})=2-\frac{1}{2}(tr\tilde{A})^2.
\end{equation}
 The solution of the differential equation (\ref{ext_h_1}) has a finite forward time singularity when $tr(\tilde{A})(0)<-2$, and it has a finite backward time
 singularity when $tr(\tilde{A})(0)>2$.
As $tr(\tilde{A}$) is a smooth function, $tr(\tilde{A})(0)\in[-2, 2]$ along each isocurved $E$ geodesic. 
Hence there exists an open set $U$ such that either $-2<tr(\tilde{A})(q)\leq 2$ for all $q\in U$ or $tr(\tilde{A})(q)=-2$ for all $q\in U$.

Let $\phi_t$ denote the flow generated by the complete geodesic field $E$. When $-2<tr(\tilde{A})(q)\leq 2$ for all $q\in U$, (\ref{ext_h_1}) implies that 
$\mathrm{div}(E)=tr(\tilde{A})\to 2$ on $\phi_t(U)$ as $t\to \infty$. As 
$$\mathrm{div}(E)=\frac{d}{dt} \int_{\phi_t(U)} d\mathrm{vol},$$
the volume of $\phi_t(U)$ is arbitrarily large as $t\to \infty$, contradicting the assumption that $\mathcal{C}$ has finite volume.

When $tr(\tilde{A})(q)=-2$ for all $q\in U$. An analogous argument shows that the volume for $\phi_{-t}(U)$ is arbitrary large as $t\to \infty$, contradicting the assumption that 
$\mathcal{C}$ has finite volume. Hence $\mathcal{E}$ is empty, and $M$ is hyperbolic.

\end{proof}

\section {Rigidity Theorems}
We prove the main theorems in this section. In the proof of each theorem, we first 
exclude the possibility that $\mathcal{O}$ is nonempty. Then we use Proposition \ref{ext} or Theorem \ref{cvc} 
to obtain the desired rigidity result. Let $\mathcal{C}$ be a connected component of $\mathcal{O}$ as in Remark \ref{uniqueness}.

\begin{dfn}
On the connected component $\mathcal{C}\subset \mathcal{O}$, define functions $R_2$ and $R_3$ by
$$R_2=R(E_1, E_2, E_2, E_3)$$
and
$$R_3=R(E_1, E_3, E_3, E_2).$$
\end{dfn}

Along a geodesic $\a(t)$ constructed in Proposition \ref{tube}, we may solve the differential equations (\ref{evo_off}) to obtain
\begin{equation} \label {sph_rigid_4}
|R_i(t)|=|R_i(0)|\exp {\Big(-\frac{3}{2}\int_{0}^t tr(A)(s) ds} \Big)
\end{equation}
for $i=2,3$.

Recall from Lemma \ref{x_lemma} that either $R_2\neq0$ or $R_3\neq 0$ at each point in $\mathcal{C}$. 
The following lemma is a useful observation for the proof of both theorems.
\begin{lem} \label{off_zero}
Let $p\in \mathcal{C}$ and let $\a: (k, l) \rightarrow  \mathcal{C}$ be a maximal integral curve of $E_1$ with $\a(0)=p$. Furthermore, assume that
$\a(t)$ is a geodesic.
\begin{enumerate}
\item If $l$ is finite, then $\lim_{t\to l^-}R_i(\a(t))=0$, for $i=2,3$. 
\item If $k$ is finite, then $\lim_{t\to k^+} R_i(\a(t))=0$, for $i=2,3$.
\end{enumerate}
\end{lem}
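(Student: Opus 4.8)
The plan is to show that when $l$ is finite the integral curve $\a$ must terminate at a non-generic point of the boundary of $\mathcal{C}$, and that the two curvature components $R_2,R_3$ are forced to vanish there by the very structure of the isotropic and extremal sets. Part (2) follows from part (1) applied to the reversed geodesic $t\mapsto\a(-t)$, so I will only treat (1) and assume $l<\infty$.

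First I would produce and locate the limit point. Since $M$ is complete, $\a$ extends to a geodesic defined on all of $\mathbb{R}$, so $q:=\lim_{t\to l^-}\a(t)$ exists in $M$. Because $\a$ is a \emph{maximal} integral curve of $E_1$ inside $\mathcal{C}$ and $l$ is finite, $q$ cannot lie in $\mathcal{C}$: otherwise the geodesic extension of $\a$, whose velocity at $q$ equals $E_1(q)$ by continuity of $E_1$ on $\mathcal{C}$ and of $\a'$, would (by uniqueness of integral curves) agree with the $E_1$-integral curve through $q$ near $t=l$, and hence continue $\a$ as an integral curve past $l$, contradicting maximality. As $\mathcal{C}$ is a connected component of the open set $\mathcal{O}$, this forces $q\in\overline{\mathcal{C}}\setminus\mathcal{C}\subset M\setminus\mathcal{O}=\mathcal{I}\cup\mathcal{E}$.

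Next I would arrange for the frame to have a limit at $q$. By Lemma \ref{parallel_lem}, $\n_{E_1}E_2=\n_{E_1}E_3=0$ along $\a$; since $\a$ is a geodesic with $\a'=E_1$, this means $E_2\circ\a$ and $E_3\circ\a$ are parallel along $\a$, hence converge as $t\to l^-$ to an orthonormal pair $\{\bar E_2,\bar E_3\}$ at $q$ spanning $\bar E_1^{\perp}$, where $\bar E_1:=\lim_{t\to l^-}\a'(t)$. Continuity of the curvature tensor then yields $R_2(\a(t))\to R(\bar E_1,\bar E_2,\bar E_2,\bar E_3)$ and $R_3(\a(t))\to R(\bar E_1,\bar E_3,\bar E_3,\bar E_2)$. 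To finish, I would note that $\bar E_1$, being a limit of the isocurved vectors $\a'(t)=E^+(\a(t))$ and the isocurved condition being closed in the unit tangent bundle, is itself isocurved at $q$; by Corollary \ref{isovector} (when $q\in\mathcal{E}$) or trivially (when $q\in\mathcal{I}$) it is then a Ricci eigenvector, so I may choose a Ricci-diagonalizing orthonormal frame $\{e_1,e_2,e_3\}$ at $q$ with $e_1=\bar E_1$. Since $\bar E_2,\bar E_3\in\spn\{e_2,e_3\}$, expanding $R(\bar E_1,\bar E_i,\bar E_i,\bar E_j)$ in this frame makes every term vanish by (\ref{Ricci_diag}) together with the skew-symmetry of $R$ in its last two slots; hence both limits are $0$.

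The step I expect to be the main obstacle is the first one: carefully establishing that a maximal integral curve of $E_1$ which is simultaneously a geodesic must, when its domain is bounded, exit $\mathcal{C}$ through an honest boundary point. One cannot a priori rule out bad behavior of $E_1$ near such a point, so the argument must lean on the geodesic completeness of $M$ and on the earlier structural results (Corollary \ref{isocurved_geo_velocity}) identifying $\a'$ with $E^+$ along $\a$. Once the limiting frame is in hand, the curvature computation in the last step is essentially formal, which is exactly why isolating the parallel pair $\{\bar E_2,\bar E_3\}$ via Lemma \ref{parallel_lem} is the crucial move — without a genuine limiting frame the quantities $R_i(\a(t))$ need not converge to anything meaningful.
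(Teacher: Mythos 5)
Your proof is correct, and its skeleton is the same as the paper's: extend $\a$ to a complete geodesic, use Lemma \ref{parallel_lem} to see that $E_2$ and $E_3$ restricted to $\a$ are parallel and hence admit limits $\bar E_2,\bar E_3$ at the finite endpoint, and conclude by continuity of parallel transport and of the curvature tensor. Where you genuinely diverge is the endgame. The paper splits into the cases $\a(l)\in\mathcal{I}$ and $\a(l)\in\mathcal{E}$ and, in the extremal case, tracks the asymptotics of $m$ (showing $m\to 0$ or $m\to\infty$) to conclude that the whole frame $\{E_1,E_2,E_3\}$ converges to a signed permutation of the Ricci eigenframe, after which (\ref{Ricci_diag}) kills the limits. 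You instead note that $\a'=E^+$ is isocurved, that the isocurved condition is closed in the unit tangent bundle, and hence that the limiting velocity $\bar E_1$ is isocurved at the endpoint; Corollary \ref{isovector} (or triviality at an isotropic point) then makes $\bar E_1$ a Ricci eigenvector, and the curvature symmetries together with (\ref{Ricci_diag}) annihilate every component $R(\bar E_1,V,V',W)$ with $V,V',W\perp\bar E_1$. This treats the isotropic and extremal cases uniformly, avoids the analysis of $m$, and does not need $\bar E_2,\bar E_3$ individually to align with eigenvectors, only that they span $\bar E_1^{\perp}$. You also spell out, via the maximality/escape argument, why the finite-time endpoint must lie in $\mathcal{E}\cup\mathcal{I}$, a point the paper asserts without comment (your appeal to ``uniqueness of integral curves'' is slightly informal, but the standard extension criterion, using $\a'(t)=E_1(\a(t))\to E_1(q)$, makes it rigorous). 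Both routes are sound; the paper's is more explicit about the limiting frame, yours is a bit more robust and self-contained.
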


\begin{proof}
We prove (1). The proof of (2) is analogous. Suppose that $l<\infty$, then $\a(l)\in \mathcal{E}\cup \mathcal{I}$. 

As $M$ is complete, we may extend $\a: (k, l) \rightarrow  \mathcal{C}$ to a complete geodesic $\a: \mathbb{R}\to M$. Let $\{E_1(t), E_2(t), E_3(t)\}$ be the orthonormal framing  determined by parallel translating the orthonormal frame $\{E_1, E_2, E_3\}$ at $\a(0)$
along $\a(t)$. By (\ref{parallel_fs}), $\{E_1(t), E_2(t), E_3(t)\}$ agrees with the orthonormal framing $\{E_1, E_2, E_3\}$ of $\mathcal{C}$ as long as $\a(t)\in \mathcal{C}$.

If $\a(l)\in \mathcal{I}$, then the formula for a curvature tensor at an isotropic point implies that
$$R(E_1(l), E_2(l), E_2(l), E_3(l))=0=R(E_1(l), E_3(l), E_3(l), E_2(l)).$$ The lemma now follows from continuity of parallel translation and the curvature tensor.

Next, consider the case when $\a(l)\in \mathcal{E}$. Then exactly one of $\l(t)$ or $\L(t)$ converges to $\e$ as $t\to l^-$.
If $\l(t)\to \e$, then $m\to \infty$. The formulae for $E_1$, $E_2$ and $E_3$ in $\mathcal{C}$ then imply that the distances
between the ordered orthonormal frames $\{E_1(t), E_2(t), E_3(t)\}$ and $\{-e_2(t), e_1(t), e_3(t)\}$ converge to zero as $t\to l^-$.
Continuity of the curvature tensor and (\ref{Ricci_diag}) imply (1) in this case. Likewise, if $\L \to \e$, then $m\to 0$. In this case, 
the distances between the ordered orthonormal frames $\{E_1(t), E_2(t), E_3(t)\}$ and $\{e_1(t), e_2(t), e_3(t)\}$ converge to zero as 
$t\to l^+$, concluding the proof similarly. 

\end{proof}

\subsection{Spherical Rank Rigidity}

For three-manifolds of higher spherical rank, we have the following theorem:
\begin{thm} 
A complete Riemannian three-manifold $M$ has higher spherical rank if and only if $M$ has constant sectional curvatures one.  
\end{thm}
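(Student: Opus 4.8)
The plan is to show that the generic set $\mathcal{O}$ must be empty for a complete three-manifold of higher spherical rank, and then invoke Proposition \ref{ext} to conclude that $M$ has constant curvature one. (The converse direction is trivial, since any unit-speed geodesic in a round sphere admits a parallel normal field spanning a great two-sphere, hence making curvature $1$.)

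So suppose toward a contradiction that $\mathcal{O}\neq\emptyset$, and let $\mathcal{C}$ be a connected component, equipped with the global framing and the geodesic $\a$ tangent to $E_1=E^+$ as constructed in Proposition \ref{tube}. Let $\a:(k,l)\to\mathcal{C}$ be the maximal integral curve of $E_1$ through a chosen point, which is a geodesic. The engine of the argument is the evolution equation from Proposition \ref{evo_tr}, which with $\e=1$ reads $E_1(\mathrm{tr}(A))=-2-\tfrac12(\mathrm{tr}(A))^2$ along $\a$. Writing $f(t)=\mathrm{tr}(A)(\a(t))$, this is $f'=-2-\tfrac12 f^2\leq -2$, so $f$ is strictly decreasing and in fact a Riccati-type comparison forces $f$ to blow up to $-\infty$ in finite forward time and to come from $+\infty$ in finite backward time; concretely, solving $f'=-2-\tfrac12 f^2$ gives $f(t)=-2\tan(t-t_0)$ up to reparametrization, which has vertical asymptotes. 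Hence both endpoints $k,l$ of the maximal interval are finite: the geodesic $\a$ can only stay inside $\mathcal{C}$ for a bounded time, since $\mathrm{tr}(A)$ is smooth (hence finite) on $\mathcal{C}$.

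Now I bring in Lemma \ref{off_zero}: since $l$ is finite, $\lim_{t\to l^-}R_i(\a(t))=0$ for $i=2,3$, and likewise $\lim_{t\to k^+}R_i(\a(t))=0$. On the other hand, from (\ref{sph_rigid_4}) we have the explicit formula $|R_i(t)|=|R_i(0)|\exp\!\big(-\tfrac32\int_0^t \mathrm{tr}(A)(s)\,ds\big)$. To get a contradiction I need the integral $\int_0^l \mathrm{tr}(A)(s)\,ds$ (equivalently $\int_0^t f$) to behave in a controlled way as $t\to l^-$: since $f(s)\to -\infty$ like $-2\tan(s-t_0)$ near $s=l$, the integral $\int_0^t f(s)\,ds$ diverges to $-\infty$, so $\exp(-\tfrac32\int_0^t f)$ diverges to $+\infty$. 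Therefore $|R_i(t)|\to\infty$ unless $R_i(0)=0$. Combining with the limit being $0$ from Lemma \ref{off_zero}, we are forced to conclude $R_i(0)=0$ for both $i=2,3$ — but $\a(0)$ is an arbitrary point of $\mathcal{C}$, so $R_2\equiv R_3\equiv 0$ on $\mathcal{C}$, contradicting Lemma \ref{x_lemma}, which asserts that at every point of $\mathcal{C}$ at least one of $R_2,R_3$ is nonzero. Hence $\mathcal{O}=\emptyset$.

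**The main obstacle** I anticipate is making the asymptotic bookkeeping near the finite endpoint $l$ airtight: one must be sure that $f\to-\infty$ (rather than, say, $\mathrm{tr}(A)$ merely failing to extend for some milder reason) and that the divergence of $\int_0^t f$ genuinely overwhelms the vanishing of $|R_i|$ — here the explicit Riccati solution $f(t)=-2\tan(t-t_0)$ is what makes this transparent, since $\int \tan$ has a logarithmic divergence and the exponential of that is a genuine pole. A secondary subtlety is the sign/direction: one endpoint feeds the forward blow-up and the other the backward blow-up, and one should check that in at least one direction the combination of Lemma \ref{off_zero} and (\ref{sph_rigid_4}) yields the contradiction — but since $f$ is monotone and blows up at \emph{both} ends, either end suffices. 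After $\mathcal{O}=\emptyset$, the manifold has extremal curvature $1$, and Proposition \ref{ext} finishes the proof.
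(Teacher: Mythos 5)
Your overall strategy is exactly the paper's: rule out $\mathcal{O}\neq\emptyset$ by combining the Riccati equation of Proposition \ref{evo_tr} (with $\e=1$) along a geodesic from Proposition \ref{tube} with the formula (\ref{sph_rigid_4}) and Lemma \ref{off_zero}, then finish with Proposition \ref{ext}. However, the key asymptotic step as you wrote it does not hold. You assert that at the finite exit time $l$ of the maximal interval with $\a((k,l))\subset\mathcal{C}$ the function $f=tr(A)\circ\a$ behaves like $-2\tan(t-t_0)$, so that $\int_0^t f\to-\infty$ and $|R_i(t)|\to\infty$. This identifies the exit time $l$ with the blow-up time of the Riccati solution, which is not justified: the ODE governs $f$ only while $\a(t)$ stays in $\mathcal{C}$, and the geodesic may leave $\mathcal{C}$ (i.e.\ hit $\mathcal{E}\cup\mathcal{I}$) strictly before the asymptote of $-2\tan$. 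Smoothness of $tr(A)$ on $\mathcal{C}$ only gives that the blow-up time is $\geq l$ — which is precisely how you correctly excluded $l=\infty$ — and says nothing about the behavior of $f$ as $t\to l^{-}$; in the typical exit scenario $f$ simply remains bounded there. So the claim ``$|R_i(t)|\to\infty$ unless $R_i(0)=0$'' is unsupported, and your own ``main obstacle'' paragraph flags this worry but does not resolve it (the explicit solution cannot, since nothing forces $\a$ to remain in $\mathcal{C}$ up to the asymptote).

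The contradiction you want survives with a weaker and easier estimate, and this is what the paper actually does. Since $f'=-2-\frac{1}{2}f^2<0$, $f$ is decreasing, so $f(s)\leq f(0)$ and hence $\int_0^t f(s)\,ds\leq |f(0)|\,l$ for all $t\in[0,l)$; by (\ref{sph_rigid_4}) the factor $\exp\big(-\frac{3}{2}\int_0^t f\big)$ is then bounded below by a positive constant, so $|R_i(t)|$ stays bounded away from $0$ whenever $R_i(0)\neq0$. This already contradicts $\lim_{t\to l^{-}}R_i(\a(t))=0$ from Lemma \ref{off_zero}, forcing $R_2(0)=R_3(0)=0$, which contradicts Lemma \ref{x_lemma} at the single point $\a(0)$ — no appeal to ``$\a(0)$ arbitrary in $\mathcal{C}$'' is needed (nor available, since Proposition \ref{tube} only supplies such geodesics from points of an open subset $U$). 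With this repair your argument coincides with the paper's proof, whose two cases ($l=\infty$: Riccati blow-up; $l<\infty$: monotonicity bound plus Lemma \ref{off_zero}) are exactly the two ingredients you use, assembled in the correct way.
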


\begin {proof}
By Proposition \ref{ext}, it suffices to show that $\mathcal{O}$ is empty.
Suppose that $\mathcal{O}$ is nonempty. By Proposition \ref{tube}, there exists a geodesic $\a$ such that $\a(0)\in \mathcal{F}^+$ and $\a'=E_1$ on a connected component $\mathcal{C}$ of $\mathcal{O}$. 
Let $(k, l)$ be the maximal open interval such that 
$\a((k, l)) \subset \mathcal{C}$.

Set $\e=1$ in Proposition \ref{evo_tr}
to obtain 
\begin{equation} \label{sph_rigid_2}
\frac{d}{dt}(tr(A))=-2-\frac{1}{2}(tr(A))^2.
\end{equation}

If $l<\infty$, we obtain a contradiction as follows. 
By Lemma \ref{x_lemma}, either $|R_2(0)|>0$ or $|R_3(0)|>0$. Assume that $|R_2(0)|>0$. The argument for
the case when $|R_3(0)|>0$ is similar.
Note that $-tr(A)$ is well-defined and smooth on $(k, l)$. Furthermore, (\ref{sph_rigid_2}) implies that $-tr(A)$ is strictly increasing on $(k, l)$.
Hence by (\ref{sph_rigid_4}), 
\begin{equation*}
\begin{split}
\lim_{t\to l^-} |R_2(t)|&=|R_2(0)|\exp {\Big(-\frac{3}{2}\int_{0}^l tr(A)(s) ds} \Big)\\
&>|R_2(0)|\exp {\Big(-\frac{3}{2} tr(A)(0)\cdot l } \Big)>0.
\end{split}
\end{equation*}
This is a contradiction to Lemma \ref{off_zero}. 

In the case when $l=\infty$, a contradiction is obtained as the solution to (\ref{sph_rigid_2}) has a finite forward time singularity.

\end{proof}

\subsection{Hyperbolic Rank Rigidity}
We first prove the following property for the isocurved geodesics constructed in Proposition \ref{tube}. 
\begin{lem} \label{hyp_lemma}
Let $\a(t)$ be an isocurved geodesic as constructed in Proposition \ref{tube} and let $(k,l)$ be the maximal open interval containing zero 
such that $\a((k,l))\subset \mathcal{C}$. 
\begin{enumerate}
\item $l=\infty$ if $tr(A)(0)\geq-2$.
\item $k=-\infty$ if $tr(A)(0)\leq -2$.
\end{enumerate}
\end{lem}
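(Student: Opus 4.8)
The plan is to exploit the evolution equation for $tr(A)$ from Proposition~\ref{evo_tr} with $\e=-1$, namely
\begin{equation*}
\frac{d}{dt}\bigl(tr(A)\bigr)=2-\frac12\bigl(tr(A)\bigr)^2,
\end{equation*}
and to observe that this Riccati-type equation has the special property that its solutions never \emph{cross} the equilibrium value $-2$ from above. More precisely, if we write $f(t)=tr(A)(\a(t))$ and $g(t)=f(t)+2$, then $g'=2-\frac12 f^2=-\frac12 g(g-4)$, so $g'=-\frac12 g^2+2g$. Thus whenever $g(0)\geq0$ (i.e. $tr(A)(0)\geq-2$), the solution satisfies $g(t)\geq0$ for all $t$ in the existence interval (the constant solutions $g\equiv0$ and $g\equiv4$ bound the region), and on this region $g$ stays bounded: it is trapped in $[0,4]$ if $0\le g(0)\le 4$, and decreases monotonically toward $4$ if $g(0)>4$. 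In either case $f=tr(A)$ remains bounded on the forward interval, so no finite-time blow-up occurs going forward. The case $tr(A)(0)\le-2$ is the time-reversed statement: setting $s=-t$ turns the equation into $\frac{d}{ds}(tr(A))=-2+\frac12(tr(A))^2$, whose solutions starting at $tr(A)\le-2$ stay $\le-2$ and bounded below, giving no finite-time blow-up going backward.

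First I would set up the ODE explicitly along $\a$, using that $\a$ is a geodesic (Proposition~\ref{tube}) so that Proposition~\ref{evo_tr} applies with $\e=-1$ on the whole interval $(k,l)$. Second, I would record the elementary qualitative analysis of the autonomous equation $f'=2-\frac12 f^2$: the equilibria are $f=\pm2$; on $(-2,\infty)$ one has $f'$ of the appropriate sign to force $f$ into $[-2,2]$ and keep it there, while on $(2,\infty)$ the solution decreases toward $2$; none of these solutions escapes to $+\infty$ in finite forward time, and by uniqueness no solution starting with $f(0)\ge-2$ can reach $-2$ in finite time either (so it certainly cannot reach $-\infty$). Third, I would argue by contradiction: suppose $tr(A)(0)\ge-2$ but $l<\infty$. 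Since $M$ is complete, the geodesic $\a$ extends past $t=l$, and at $\a(l)\in\mathcal{E}\cup\mathcal{I}$ the quantity $tr(A)$ — being built from parallel-transported frame data and the connection — would have to remain finite; but the only way the maximal interval of $\a$ inside $\mathcal{C}$ can be finite is if $\a(l)$ exits $\mathcal{C}$, which is fine, so the real content is that \emph{nothing forces $l$ to be finite}. Hence I should instead phrase the contradiction as: the function $tr(A)$ is smooth and finite wherever $\a$ stays in $\mathcal{C}$, and since the ODE solution with $f(0)\ge-2$ exists and is bounded for all forward time, and $\a$ itself (as a geodesic in the complete manifold $M$) is defined for all time, there is no mechanism to leave $\mathcal{C}$ in finite forward time unless $\a$ hits $\mathcal{E}\cup\mathcal{I}$ — and I would need to rule that out or show it is consistent. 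Let me reconsider: the cleanest route is that $l<\infty$ forces $\a(l)\in\partial\mathcal{C}\subset\mathcal{E}\cup\mathcal{I}$, and then I would use Lemma~\ref{off_zero} together with the boundedness of $\int_0^t tr(A)$ (which follows from $tr(A)$ staying in a bounded set) to derive that $|R_i(t)|$ stays bounded away from $0$, contradicting Lemma~\ref{off_zero}. This is exactly the argument used in the spherical case, adapted to the sign $\e=-1$.

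So the key steps in order are: (i) invoke Proposition~\ref{evo_tr} with $\e=-1$ to get the Riccati equation for $tr(A)$ along $\a$; (ii) do the phase-line analysis showing that $tr(A)(0)\ge-2$ implies $tr(A)$ stays bounded on the forward existence interval (and symmetrically $tr(A)(0)\le-2$ controls the backward interval); (iii) assume for contradiction that $l<\infty$, so $\a(l)\in\mathcal{E}\cup\mathcal{I}$ and Lemma~\ref{off_zero} gives $R_i(\a(t))\to0$ as $t\to l^-$ for $i=2,3$; (iv) combine the boundedness of $tr(A)$ on $[0,l)$ with the integrated evolution equation~\eqref{sph_rigid_4} to conclude $\lim_{t\to l^-}|R_i(t)|>0$ for whichever of $i=2,3$ has $R_i(0)\ne0$ (such $i$ exists by Lemma~\ref{x_lemma}), contradicting (iii); (v) conclude $l=\infty$, and run the time-reversed argument for part~(2).

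The main obstacle I anticipate is part~(iv): making sure that the bound on $tr(A)$ over the (now possibly infinite) interval $[0,l)$ is strong enough to control $\int_0^{l}tr(A)(s)\,ds$ in the right direction. When $tr(A)(0)\in[-2,2]$ the solution is trapped in $[-2,2]$, so $\int_0^l tr(A)\,ds\le 2l<\infty$ when $l<\infty$, and then $|R_i(l)|\ge|R_i(0)|\exp(-3l)>0$; when $tr(A)(0)>2$ the solution decreases to $2$, so again $tr(A)\le tr(A)(0)$ and the same estimate works. The subtlety is purely bookkeeping about signs of the exponent in~\eqref{sph_rigid_4} and about which endpoint ($l$ versus $k$) the blow-up or the exit occurs at, but no genuinely new idea beyond the spherical-rank proof is needed; the role of the finite-volume hypothesis (used elsewhere) does not enter this particular lemma, which is a statement about a single geodesic.
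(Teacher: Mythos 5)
Your proposal is correct and follows essentially the same route as the paper: take $\e=-1$ in Proposition~\ref{evo_tr}, use the phase-line analysis of the Riccati equation to bound $tr(A)$ on the relevant (forward or backward) interval, and then combine the integrated formula~(\ref{sph_rigid_4}) with Lemma~\ref{x_lemma} and Lemma~\ref{off_zero} to rule out a finite endpoint of the maximal interval in $\mathcal{C}$. The brief detour in your third step is resolved correctly, and your final outline (i)--(v) is exactly the paper's argument.
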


\begin{proof}
 
By taking $\e=-1$ in Proposition \ref{evo_tr}, we have the following evolution equation along $\a(t)$ on $(k,l)$:

\begin{equation}\label {trace_hyper}
\frac{d}{dt}(tr(A))=\frac{1}{2}(2-tr(A))(2+tr(A)).
\end{equation}

Note that by Lemma \ref{x_lemma}, either $|R_2(0)|>0$ or $|R_3(0)|>0$. Assume that $|R_2(0)|>0$. The argument for
the case when $|R_3(0)|>0$ is similar.

To prove (1), suppose that $l<\infty$ when $tr(A)(0)\geq-2$.  A straightforward analysis on the stability of equilibrium points for (\ref{trace_hyper}) shows that $tr(A)(t)$ is bounded on $[0,l]$ when $tr(A)(0)\geq-2$. Hence by (\ref{sph_rigid_4}), we have 
\begin{equation*}
\lim_{t\to l^-} |R_2(t)|=|R_2(0)|\exp {\Big(-\frac{3}{2}\int_{0}^l tr(A)(s) ds}\Big) >0.
\end{equation*}
This is a contradiction to Lemma \ref{off_zero}, concluding the proof of (1).

To prove (2), suppose that $k$ is finite when $tr(A)(0)\leq -2$. 
In this case, a straightforward analysis on the stability of equilibrium points for (\ref{trace_hyper}) shows that $tr(A)(t)$ is bounded on $[k, 0]$. By (\ref{sph_rigid_4}),
\begin{equation}
\lim_{t\to k^+} |R_2(t)|=|R_2(0)|\exp {\Big(-\frac{3}{2}\int_{0}^k tr(A)(s) ds}\Big) >0,
\end{equation}
contradicting Lemma \ref{off_zero}. Hence $k=-\infty$, concluding the proof of (2).

\end{proof}

\begin {thm}
A complete finite volume Riemannian three-manifold $M$ has higher hyperbolic rank if and only if $M$ is hyperbolic.
\end{thm}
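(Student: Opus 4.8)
The plan is to mirror the spherical case: by Theorem~\ref{cvc} it suffices to show that $\mathcal{O}=\emptyset$, so suppose to the contrary that $\mathcal{O}\neq\emptyset$ and fix a connected component $\mathcal{C}$ of $\mathcal{O}$. By Remark~\ref{uniqueness} we may pass to a finite cover (which still has finite volume, since finite covers of finite-volume manifolds have finite volume) and assume that a global Ricci diagonalizing orthonormal frame $\{e_1,e_2,e_3\}$ satisfying (\ref{order}) exists on $\mathcal{C}$. By Proposition~\ref{tube} choose a nonempty open set $W\subset\mathcal{C}$ every $E^+$-integral curve of which is an isocurved geodesic; along each such geodesic Proposition~\ref{evo_tr} with $\e=-1$ gives the evolution equation (\ref{trace_hyper}) for the smooth function $tr(A)=\mathrm{div}(E^+)$ on $\mathcal{C}$, where $A=\n E^+$.

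The contradiction will come from unbounded volume growth, exactly as in the proofs of Proposition~\ref{ext} and Theorem~\ref{cvc}, the only new point being that the isocurved geodesics need not remain in $\mathcal{C}$ — which is precisely what Lemma~\ref{hyp_lemma} is designed to handle (and which in turn rests, through Lemma~\ref{off_zero}, on the nonvanishing of $R_2$ or $R_3$ from Lemma~\ref{x_lemma}). I would split into two cases according to the sign of $tr(A)$ on $W$. First, suppose some $q\in W$ has $tr(A)(q)>-2$. Shrink to a relatively compact open $U$ with $q\in U$, $\overline U\subset\mathcal{C}$, $U\subset W$, and $tr(A)\ge-2+\d$ on $U$ for some $\d>0$. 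By Lemma~\ref{hyp_lemma}(1) the $E^+$-geodesic from every point of $U$ stays in $\mathcal{C}$ for all $t\ge 0$, so the flow $\phi_t$ of $E^+$ is defined on $U$ for all $t\ge 0$ with $\phi_t(U)\subset\mathcal{C}$. An analysis of the equilibria of (\ref{trace_hyper}) together with continuous dependence of solutions on initial data produces a time $T>0$ with $tr(A)\ge 1$ on $\phi_t(U)$ for all $t\ge T$. Since $\tfrac{d}{dt}\mathrm{vol}(\phi_t(U))=\int_{\phi_t(U)}tr(A)\,d\mathrm{vol}$, integrating the resulting differential inequality gives $\mathrm{vol}(\phi_t(U))\ge \mathrm{vol}(\phi_T(U))\,e^{t-T}\to\infty$, contradicting $\mathrm{vol}(\mathcal{C})<\infty$.

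Otherwise $tr(A)\le -2$ throughout $W$. Take any relatively compact open $U\subset W$ with $\overline U\subset\mathcal{C}$. By Lemma~\ref{hyp_lemma}(2), for every point of $U$ the $E^+$-geodesic is defined in $\mathcal{C}$ for all $t\le 0$; and since solutions of (\ref{trace_hyper}) with initial value $\le -2$ remain $\le -2$ for all $t\le 0$, we have $tr(A)\le -2$ along these geodesics. Hence the backward flow $\psi_s=\phi_{-s}$, which is the flow of $-E^+$, is defined on $U$ for all $s\ge 0$ with $\psi_s(U)\subset\mathcal{C}$, and $\mathrm{div}(-E^+)=-tr(A)\ge 2$ on $\psi_s(U)$. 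Then $\tfrac{d}{ds}\mathrm{vol}(\psi_s(U))=\int_{\psi_s(U)}(-tr(A))\,d\mathrm{vol}\ge 2\,\mathrm{vol}(\psi_s(U))$, so $\mathrm{vol}(\psi_s(U))\ge\mathrm{vol}(U)e^{2s}\to\infty$, again contradicting finite volume. Therefore $\mathcal{O}=\emptyset$, and Theorem~\ref{cvc} completes the proof.

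I expect the main obstacle to be the bookkeeping around the flows rather than any new idea: one must be sure that $\phi_t$ (resp. $\psi_s$) is actually defined on all of $U$ throughout the range of times used — this is exactly the role of Lemma~\ref{hyp_lemma} — and that the divergence bound is uniform over the flowed-out region $\phi_t(U)$ and not merely along a single geodesic, which is where relative compactness of $U$ and continuous dependence of ODE solutions enter. Conceptually, the feature absent in the extremal-curvature argument of Theorem~\ref{cvc} is that here one cannot assert $tr(A)\in[-2,2]$ on all of $\mathcal{C}$, because an isocurved geodesic may exit $\mathcal{C}$ in finite time and $tr(A)$ may blow up as one approaches $\partial\mathcal{C}$; Lemma~\ref{hyp_lemma} circumvents this by converting a one-sided sign condition on $tr(A)$ into one-sided completeness of the corresponding geodesic inside $\mathcal{C}$.
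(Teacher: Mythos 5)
Your proposal is correct and follows essentially the same route as the paper: reduce to $\mathcal{O}=\emptyset$ via Theorem~\ref{cvc}, use Proposition~\ref{tube}, the evolution equation of Proposition~\ref{evo_tr} with $\e=-1$, and Lemma~\ref{hyp_lemma} to split into the cases $tr(A)>-2$ and $tr(A)\leq-2$, then derive a contradiction with finite volume from the divergence of the isocurved geodesic field. Your added care about uniformity over the flowed region and the backward propagation of $tr(A)\leq-2$ only makes explicit what the paper's proof leaves implicit.
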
 

\begin{proof}
By Theorem \ref{cvc}, it suffices to show that $\mathcal{O}$ is empty. Suppose that $\mathcal{O}$ is nonempty. Let $U$ be the open subset of $\mathcal{C}$ as in Proposition \ref{tube}.
For each $p\in U$, let $\varphi_t (p)$ be the integral curve of $E_1$ starting from $p$. 

Note that after possibly shrinking the open set $U$, we may assume that either
$tr(A)(p)> -2$ for all $p\in U$ or $tr(A)(p)\leq -2$ for all $p\in U$. 

If $tr(A)(p)> -2$ for all $p\in U$, then Lemma \ref{hyp_lemma} implies that $\varphi_t(U)\subset \mathcal{C}$ for all $t>0$. Furthermore, (\ref{trace_hyper})
implies that $tr(A)$ approaches to $2$ on $\varphi_t(U)$ as $t$ approaches to infinity. As
\begin{equation}
tr(A)=\mathrm{div}(E_1)=\frac{d}{dt} \int_{\varphi_t (U)} d\mathrm{vol},
\end{equation}
the volume of $\mathcal{C}$ is infinite, a contradiction to the 
assumption that $M$ has finite volume. Hence $\mathcal{O}$ is empty.

Likewise, if $tr(A)(p)\leq -2$ for all $p\in U$, Lemma \ref{hyp_lemma} implies that $\varphi_{-t} (U)\subset \mathcal{C}$ for all $t>0$. The equation
(\ref{trace_hyper}) again shows that 
$$\mathrm{div}(-E_1)=-\mathrm{div}(E_1)\to 2$$ 
on $\varphi_{-t} (U)$ as $t$ approaches to infinity. Hence an analogous argument shows that 
$\mathcal{C}$ has infinite volume, contradicting to the assumption that $M$ has finite volume. Hence $\mathcal{O}$ is empty. This concludes the proof of the theorem.

\end{proof}

We now prove Theorem \ref{thm_3}.
\begin{thm}
A homogeneous three-manifold $M$ of higher hyperbolic rank is hyperbolic.
\end{thm}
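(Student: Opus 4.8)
The plan is to reduce to the extremal case and then apply Theorem \ref{cvc_hom}, so it suffices to prove that $\mathcal{O}=\emptyset$. Since $M$ is homogeneous, the number of distinct eigenvalues of the Ricci tensor is the same at every point; by the discussion following the definition of $\mathcal{I},\mathcal{E},\mathcal{O}$ this means $M$ is entirely one of $\mathcal{I}$, $\mathcal{E}$, or $\mathcal{O}$. If $M=\mathcal{I}$, then $\l=\L=\e=-1$ and Lemma \ref{sectional} shows $M$ has constant curvature $-1$. If $M=\mathcal{E}$, then $M$ has extremal curvature $-1$, and passing to the universal cover $\widetilde M$ — which is connected, simply connected, complete, homogeneous (being the universal cover of a homogeneous manifold), of extremal curvature $-1$, and of higher hyperbolic rank (the rank condition passes to any Riemannian cover) — Theorem \ref{cvc_hom} gives $\widetilde M\cong\mathbb{H}^3$, whence $M$ is a hyperbolic space form. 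So the whole theorem comes down to excluding the case $M=\mathcal{O}$.

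Assume $M=\mathcal{O}$. I would pass to a finite cover as in Remark \ref{uniqueness} so that a connected component $\mathcal{C}$ of $\mathcal{O}$ carries a global Ricci diagonalizing frame $\{e_1,e_2,e_3\}$ satisfying (\ref{order}); although this finite cover need not be homogeneous, the eigenvalue functions $\l,\e,\L$ on $\mathcal{C}$ are pullbacks of the corresponding constant functions on $M$ and hence remain constant, and therefore so is $m=\sqrt{(\e-\l)/(\L-\e)}$.

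The key observation is that, in dimension three, the Weyl tensor vanishes and the curvature tensor is an algebraic function of the metric and the Ricci tensor. Consequently, constancy of $\l,\e,\L$ forces the components of $R$ in the frame $\{e_1,e_2,e_3\}$ to be constant on $\mathcal{C}$; and since the frame $\{E_1,E_2,E_3\}$ of Section 5 is built from $\{e_1,e_2,e_3\}$ with coefficients depending only on $m$, the functions $R_2=R(E_1,E_2,E_2,E_3)$ and $R_3=R(E_1,E_3,E_3,E_2)$ are constant on $\mathcal{C}$. By Lemma \ref{x_lemma}, at least one of $R_2,R_3$ is a nonzero constant. Now take an isocurved geodesic $\a$ tangent to $E^+$ as provided by Proposition \ref{tube}, and apply Proposition \ref{evo_tr} with $\e=-1$ along $\a$. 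Because $R_2,R_3$ are constant, their $E_1$-derivatives vanish, so the off diagonal evolution equations give $tr(A)\,R_i\equiv 0$ along $\a$ for $i=2,3$; choosing $i$ with $R_i\neq 0$ yields $tr(A)\equiv 0$ along $\a$. But then the trace evolution equation reads $0=E_1(tr(A))=-2\e-\frac{1}{2}(tr(A))^2=2$, a contradiction. Hence $M\neq\mathcal{O}$, completing the proof.

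The step I expect to be the crux is the middle one: recognizing that homogeneity, by way of the dimension-three identity expressing $R$ in terms of $\mathrm{Ric}$, pins $R_2$ and $R_3$ down to constants and thereby collapses the evolution equations of Proposition \ref{evo_tr} — this is what replaces the finite-volume hypothesis used in the proof of Theorem \ref{thm_2}. The accompanying subtlety, that the reduction to a global Ricci diagonalizing frame on a finite cover must not destroy the constancy of $\l,\e,\L$, is handled as above; everything else is a direct appeal to Propositions \ref{tube} and \ref{evo_tr} and Lemma \ref{x_lemma}.
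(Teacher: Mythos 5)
Your proposal is correct and follows essentially the same route as the paper: reduce via Theorem \ref{cvc_hom} to excluding $M=\mathcal{O}$, then use homogeneity to conclude that $R_2$ and $R_3$ are constant, so the off-diagonal equations in (\ref{evo_off}) force $tr(A)\equiv 0$ along the geodesics of Proposition \ref{tube}, contradicting the trace equation in Proposition \ref{evo_tr} with $\e=-1$. Your extra care --- justifying the constancy of $R_2,R_3$ via the dimension-three expression of $R$ in terms of $\mathrm{Ric}$ and noting that this survives passage to the (possibly non-homogeneous) cover of Remark \ref{uniqueness} --- merely fills in details the paper leaves implicit.
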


\begin{proof}
By Theorem \ref{cvc_hom}, it suffices to rule out the case when $M=\mathcal{O}$. 
Suppose that $M=\mathcal{O}$. 
Since $M$ is homogeneous, $R_2$ and $R_3$ are constant on $\mathcal{C}$. At each point in $\mathcal{C}$, either $R_2\neq 0$ or $R_3\neq0$.
By (\ref{evo_off}), $tr(A)=0$ on $\mathcal{C}$, contradicting with (\ref{evo_tr}). 

\end{proof}

\bibliographystyle{plain}
\bibliography{Rank}


\end{document}